\theoremstyle{plain} 
\newtheorem{theorem}{Theorem}
\newtheorem{theoremA}[theorem]{Theorem}
\newtheorem{theoremB1}[theorem]{Theorem}
\newtheorem{theoremB2}[theorem]{Theorem}
\newtheorem{theoremB3}[theorem]{Theorem}
\newtheorem{theoremD}[theorem]{Theorem}
\newtheorem{theoremE}[theorem]{Theorem}
\newtheorem{theoremF}[theorem]{Theorem}
\newtheorem*{theorem*}{Theorem}
\newtheorem{proposition}[theorem]{Proposition}
\newtheorem{lemma}[theorem]{Lemma}
\theoremstyle{definition}
\newtheorem*{remark}{Remark}
\newtheorem{example}[theorem]{Example}
\newtheorem*{definition}{Definition}
\newtheorem*{notation}{Notation}
\def\mbb #1{\mathbb{#1}}
\def\mrm #1{\mathrm{#1}}
\def\Cal #1{\mathcal{#1}}
\def\C{\mbb{C}}
\def\CP{\mbb{CP}}
\def\R{\mbb{R}}
\def\Z{\mbb{Z}}
\def\PGL{\mrm{PGL}}
\def\tr{\mrm{tr}}
\def\pr{\mrm{pr}}
\def\id{\mrm{id}}
\def\res{\mathrm{Res}}
\newcommand{\sminus}{\smallsetminus}
\newcommand*{\transp}[2][-3mu]{\ensuremath{\mskip1mu\prescript{\smash{\mathrm t\mkern#1}}{}{\mathstrut#2}}}%
\author{Martin Klime\v{s}}
\title{On equivalence of singularities of linear second order ordinary differential equations by point transformations}
\begin{document}
\maketitle

\begin{abstract}
\noindent
The article provides a local classification of singularities of meromorphic second order linear differential equation with respect to analytic/meromorphic linear point transformations. It also addresses the problem of determining the Lie algebra of analytic linear infinitesimal symmetries of the singularities.

\def\keywords#1{\small{\textbf{Key words:} #1}}\def\and{\ifhmode\unskip\nobreak\fi\ $\cdot$\ }
\smallskip\noindent
\keywords{Linear ordinary differential equations \and local analytic classification 
	\and regular singularity \and irregular singularity \and Stokes phenomenon \and analytic Lie symmetries.}
\end{abstract}

\section{Introduction}

Considering a second order linear differential equation (shortly LDE) on a complex domain $\Omega\subseteq\C$
\begin{equation}\label{eq:LT-ODE}
 \tfrac{d^2y}{dx^2}+a_1(x) \tfrac{dy}{dx}+a_0(x)y=0, \qquad (x,y)\in\Omega\times\C,
\end{equation}
with meromorphic coefficients $a_1(x), a_0(x)$,
one can associate to it its companion linear differential system for $v=\transp{\big(y,\tfrac{dy}{dx}\big)}$
\begin{equation}\label{eq:LT-companionsystem}
 \tfrac{dv}{dx}=\left(\begin{smallmatrix} 0 & 1 \\[3pt] -a_0 & -a_1 \end{smallmatrix}\right)v, \qquad (x,v)\in\Omega\times\C^2.
\end{equation}
And vice versa every \hbox{$2\!\times\!2$} linear differential system with meromorphic coefficients can be meromorphically transformed to the form of a companion system \eqref{eq:LT-companionsystem} by virtue of the so called ``cyclic vector lemma'' \cite[\S 2.1]{SP}.
From the point of study of the solutions there is not much distinction between meromorphic second order LDEs \eqref{eq:LT-ODE} and 
\hbox{$2\!\times\!2$} meromorphic differential  systems \eqref{eq:LT-companionsystem}. Correspondingly, the two are also interchangeable from the point of view of differential Galois theory.  
However, there is an important difference between them when it comes to the underlying space and to its transformation group, and consequently also when it comes to their Lie symmetry groups.

A meromorphic linear differential system 
\begin{equation}\label{eq:LT-system}
\tfrac{dv}{dx}=A(x)v, \qquad (x,v)\in\Omega\times\C^2,
\end{equation}
can be seen as a meromorphic connection on a (trivial) rank 2 vector bundle over $\Omega$.
As such, a natural kind of transformations of the system are the \emph{gauge transformations}
\begin{equation}\label{eq:LT-systemgauge}
 \tilde x=\phi(x),\qquad \tilde v=T(x)v,
\end{equation}
with $\phi:\Omega\to\Omega$ an analytic diffeomorphism and $T(x)$ a matrix-valued function that is either analytic and analytically invertible, or meromorphic and meromorphically invertible.
The corresponding systems for $v$ and $\tilde v$ are then called \emph{analytically}, resp. \emph{meromorphically, equivalent}.
In the literature one often considers the above transformations with $\phi=\id_\Omega$, but here we opt for the more general form \eqref{eq:LT-systemgauge}.

On the other hand, a second order LDE \eqref{eq:LT-ODE} is living as a connection on the 1-jet bundle $\Cal J^1(\Omega)$
(or more generally on a codimension 1 subbundle of the 2-jet bundle $\Cal J^2(\Omega)$, cf. \cite[§19E]{IlYa}).
A natural kind of transformations are \emph{linear point transformations}, that is the jet prolongations of the transformations
\begin{equation}\label{eq:LT-ODEpoint}
 \tilde x=\phi(x),\qquad \tilde y=t(x)y,
\end{equation}
with $\phi:\Omega\to\Omega$ analytic diffeomorphism and $t(x)$ either a non-vanishing analytic, or a non-zero meromorphic,  function.
Such transformations were considered already by Kummer \cite{Kum}, and it is known that general point transformations 
preserving the class of second (or higher) order LDEs are locally of the this form \cite{Sta}.
The corresponding equations for $y$ and $\tilde y$ are then called \emph{analytically}, resp. \emph{meromorphically, equivalent}.

For higher order differential equations the group of transformations \eqref{eq:LT-ODEpoint} is very restrictive, therefore more general transformations are often considered
such as ``Weyl transformations'' (generalized linear contact transformations) \cite{IlYa,TY}
\begin{equation*}
 \tilde x=\phi(x),\qquad \tilde y=t_0(x)y+\ldots +t_{n-1}(x) \left(\tfrac{\partial}{\partial x}\right)^{n-1}y,
\end{equation*}
where $n$ is the order of the LDE.
Nevertheless in the case $n=2$ that we consider the dimension of the solutions space is the same as the dimension of the ambient $(x,y)$-space and locally
the linear point transformations \eqref{eq:LT-ODEpoint} are relatively rich enough to provide, at least in a generic situation, a satisfactory theory.

In this paper, we are interested in \emph{local analytic classification} of meromorphic second order LDEs \eqref{eq:LT-ODE} with respect to linear point transformations \eqref{eq:LT-ODEpoint} near a fixed point in the $x$-space placed at the origin, i.e. $\Omega=(\C,0)$.
If both coefficients $a_0(x), a_1(x)$ of \eqref{eq:LT-ODE} are analytic at $x=0$, then it is well known 
that the equation is locally analytically equivalent to 
$$ \tfrac{d^2y}{dx^2}=0.$$
We will therefore investigate only singular points, where at least one of $a_0(x),\ a_1(x)$ has a pole.

While the local analytic/meromorphic theory of meromorphic linear differential systems with respect gauge transformations fixing $x$ is well established by now, e.g. \cite{BaVa,Ba,IlYa,SP, BJL12},
and it is easily generalized to all transformation \eqref{eq:LT-systemgauge}, there appears to be very little written on the subject of local analytic/meromorphic classification of singular second-order LDEs with respect to linear point transformations.
We will describe the local analytic/meromorphic moduli space of LDEs \eqref{eq:LT-ODE} at singularities of regular or non-resonant irregular type (Sections~\ref{sec:1.2} and \ref{sec:1.4}) and describe their Lie groups of linear infinitesimal symmetries (Section~\ref{sec:1.5}).

\subsection{Singularities of linear differential equations}\label{sec:1.1} 
Suppose that at least one coefficient $a_i(x)$, $i=0,1$, of \eqref{eq:LT-ODE} has a pole at the origin, and denote
$m_i\in\Z$ the respective orders of the poles. 
Let 
$$\nu=\max\{m_1-1,\lceil\tfrac{m_0}{2}\rceil-1\}\geq 0;$$ 
we will call it the \emph{Poincar\'e rank} of the LDE.
Denoting
$$\delta_\nu:=x^{\nu+1}\tfrac{\partial}{\partial x},$$
and rewriting the equation \eqref{eq:LT-ODE} as
\begin{equation}\label{eq:LT-irregODE}
 \delta_\nu^2 y-p(x)\delta_\nu y-q(x)y=0, 
\end{equation}
where $p(x)=-x^{\nu+1}a_1(x)+(\nu+1)x^\nu$ and $q(x)=-x^{2\nu+2}a_0(x)$,
then the Poincar\'e rank $\nu\geq 0$ is the smallest integer for which $p(x)$, $q(x)$ are analytic at $0$.

The \emph{companion system} for \eqref{eq:LT-irregODE} is the system for $v=\transp{\big(y,\delta_\nu y\big)}$
\begin{equation}\label{eq:LT-irregsyst}
\delta_\nu v=\left(\begin{smallmatrix} 0 & 1 \\[3pt] q(x) & p(x) \end{smallmatrix}\right)v.
\end{equation} 
The equation \eqref{eq:LT-irregODE} can be rewritten as
\begin{equation}\label{eq:LT-Delta}
\big(\delta_\nu-\tfrac{p(x)}{2}\big)^2y=\tfrac{\Delta(x)}{4} y,\quad \text{where }\ 
\Delta:=p^2+4q-2\delta_\nu p,
\end{equation}	
and the system for $u=\transp{\big(y,(\delta_\nu-\tfrac{p}{2})y\big)}$ as
\begin{equation*}
\delta_\nu u=\left(\begin{smallmatrix} \tfrac{p(x)}{2} & 1 \\[3pt] \tfrac{\Delta(x)}{4} & \tfrac{p(x)}{2} \end{smallmatrix}\right)v.
\end{equation*} 
An equation \eqref{eq:LT-irregODE} with $p(x)=0$ will be called \emph{trace-free}.

\begin{definition}
\begin{enumerate}[wide=0pt, leftmargin=\parindent]
\item If $\nu=0$, then the singularity is called \emph{regular} or \emph{Fuchsian} (by a theorem of Fuchs the notions of regular singularity and Fuchsian singularity coincide for LDEs \cite[Theorem 19.20]{IlYa}).
A Fuchsian singularity is \emph{strongly non-resonant} if the roots of $\lambda^2-p(0)\lambda-q(0)=0$ do not differ by an integer,\footnote{This condition is slightly stronger than the usual non-resonance condition that demands that the roots do not differ by a \emph{non-zero} integer \cite[Definition~16.12]{IlYa}.} 
i.e. if $\sqrt{\Delta(0)}\notin\Z$,
otherwise it is called \emph{resonant}. 
		
\item If $\nu>0$, then the singularity is called \emph{irregular}. 
An irregular singularity is \emph{non-resonant} if the roots of
$\lambda^2-p(0)\lambda-q(0)=0$ are distinct; i.e. if $\Delta(0)\neq 0$.

It will be called \emph{non-degenerate} if 
\begin{itemize}[nosep]
	\item either $\Delta(0)\neq 0$ (non-resonant),
	\item or $\Delta(0)=0$ and $p(0)=0$, in which case $\frac{d\Delta}{dx}(0)\neq 0$ (otherwise the Poincar\'e rank would be lower than $\nu$).
\end{itemize}
This is equivalent to say that in the coordinate $s=x^{\frac12}$ the LDE has a non-resonant irregular singularity. 
\end{enumerate}
\end{definition}

We shall prove the following result relating the formal and the analytic equivalence of singular LDEs with that of their companion systems \eqref{eq:LT-irregsyst}.

\begin{theoremA}\label{thm-A}~
Two LDEs \eqref{eq:LT-irregODE} with either a regular singularity or a non-degenerate irregular singularity at the origin are analytically, resp. formally, equivalent if and only if the companion systems \eqref{eq:LT-irregsyst} are analytically, resp. formally, equivalent.
\end{theoremA}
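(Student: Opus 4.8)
The plan is to treat the two implications separately. The ``only if'' direction is a formal manipulation: a linear point transformation \eqref{eq:LT-ODEpoint} prolongs to the $1$-jet bundle, and in the trivialization $v=\transp{(y,\delta_\nu y)}$ this prolongation is precisely a gauge transformation \eqref{eq:LT-systemgauge} of the companion system \eqref{eq:LT-irregsyst}. After the coordinate change $\tilde x=\phi(x)$ --- which merely replaces $\delta_\nu$ by $g(x)\tilde\delta_\nu$ for a unit $g$ and so turns \eqref{eq:LT-irregsyst} into another companion-type system --- the substitution $\tilde y=t(x)y$ is matched by the lower-triangular gauge matrix $T(x)=\left(\begin{smallmatrix} t & 0 \\ \delta_\nu t & t\end{smallmatrix}\right)$, which is analytic, resp. formal, exactly when $t$ and $\phi$ are. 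I would also record here the elementary converse, used below: a gauge transformation between two companion systems with the same $\delta_\nu$ is the prolongation of a point transformation with $\phi=\id$ iff its matrix is lower triangular --- imposing that the transformed matrix again have first row $(0,1)$ forces $T=\left(\begin{smallmatrix} t&0\\ \delta_\nu t& t\end{smallmatrix}\right)$ --- equivalently iff it preserves the line subbundle $L=\ker\pi\subset\Cal O^2$, where $\pi\colon\transp{(y,\delta_\nu y)}\mapsto y$ is the ``first-coordinate'' projection.

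For the ``if'' direction, let $E,\tilde E$ denote the two LDEs and suppose their companion systems are equivalent via $(\phi,T)$. An analytic diffeomorphism $\phi$ fixing $0$ preserves the orders of the poles, hence the Poincar\'e rank and the type of the singularity; applying the point transformation $(\phi^{-1},1)$ to $\tilde E$ and using the forward implication, I reduce to the case $\phi=\id$ (the harmless rescaling of $\delta_\nu$ by a unit aside). Then $T$ is an isomorphism of the meromorphic connections underlying the two companion systems, it carries solutions $y$ of $E$ to solutions $\tilde y=\pi(Tv)$ of $\tilde E$, and by the observation above it is the prolongation of a point transformation exactly when $T(L)=\tilde L$, where $\tilde L$ is the first-coordinate line of $\tilde E$'s companion system. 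So the theorem comes down to the assertion: \emph{on the companion connection of a regular or non-degenerate irregular LDE, any two line subbundles $L'$ for which $(\cdot,\cdot,L')$ is again the companion system of such an LDE with the same $\delta_\nu$ lie in one orbit of the automorphism group of the connection.} Granting this, I correct $T$ by an automorphism of the companion connection of $\tilde E$ to achieve $T(L)=\tilde L$, so that $T$ prolongs a point transformation $E\to\tilde E$ of the required regularity.

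To establish the orbit assertion I would use the local normal forms of Sections~\ref{sec:1.2} and \ref{sec:1.4} together with the description of the infinitesimal symmetries in Section~\ref{sec:1.5}: in each normal form the companion frame vectors $c_1,c_2$ (characterized by $\nabla_{\delta_\nu}c_1\in\Cal O c_2$ and the $c_1$-component of $\nabla_{\delta_\nu}c_2$ being a unit) are severely constrained --- for instance in a non-resonant normal form $L'$ is forced into general position with respect to the two formal eigenlines, and the diagonal torus together with the remaining automorphisms acts transitively on such lines. The non-degenerate case with $\Delta(0)=0=p(0)$ I would reduce to the non-resonant irregular case in the coordinate $s=x^{1/2}$, as noted after the Definition, and then descend. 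I expect the main obstacle to be precisely this orbit assertion in its \emph{analytic} form in the irregular case: one must check that the correcting automorphism can be chosen analytic and not merely formal, which amounts to verifying its compatibility with the Stokes data; the resonant Fuchsian case and the $\Delta(0)=0$ boundary case also require separate bookkeeping, whereas the prolongation formula, the reduction to $\phi=\id$, and the translation into a statement about line subbundles are routine.
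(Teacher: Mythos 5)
Your ``only if'' direction and the reformulation in terms of the line subbundle $L=\ker\pi$ are fine and do locate the crux, but the ``orbit assertion'' on which your ``if'' direction rests is false in the form you state it, and it fails already in the simplest case. Take the strongly non-resonant regular normal form, whose companion system is gauge-equivalent over fixed $x$ to $\delta_0 v=\mathrm{diag}(\lambda_1,\lambda_2)v$ with $\lambda_1-\lambda_2\notin\Z$. The automorphism group of this connection over $\phi=\id$ is only the constant diagonal torus: an automorphism $S$ satisfies $\delta_0 s_{12}=(\lambda_1-\lambda_2)s_{12}$, so $s_{12}=c\,x^{\lambda_1-\lambda_2}$ must vanish, and likewise $s_{21}$. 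On the other hand the admissible line subbundles include $L'_u=\ker(1,u(x))$ for \emph{every} analytic unit $u$: the covector $(1,u)$ satisfies the cyclic condition and yields a Fuchsian LDE with the same exponents and solution basis $x^{\lambda_1},\,u(x)x^{\lambda_2}$. The torus orbit of $L'_u$ is just $\{L'_{cu}\}_{c\in\C^*}$, a one-parameter family inside an infinite-dimensional one, so $L'_1$ and $L'_{1+x}$ are admissible but lie in different orbits even though the corresponding LDEs are point-equivalent --- the equivalence necessarily uses a nontrivial $\phi$, obtained here by solving $(\lambda_2-\lambda_1)\log(1+g)=\log u$. (Transitivity of the torus on lines in general position in the \emph{fibre at $0$}, which is what your parenthetical argument gives, is not transitivity on line subbundles.) In the irreducible irregular case matters are worse: the automorphism group over fixed $x$ reduces to scalars, which act trivially on lines, while the admissible subbundles still form an infinite-dimensional family; so the failure is not an analytic-versus-formal issue as you anticipate, but occurs already at the formal level.

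The defect is the premature reduction to $\phi=\id$: once $\phi$ is frozen there is no freedom left to move $T(L)$ onto $\tilde L$. The correcting self-equivalence must be allowed a nontrivial reparametrization of $x$, and with that freedom restored your transitivity statement becomes essentially equivalent to Theorems~\ref{thm-B1}--\ref{thm-B3} themselves; the content of their proofs is exactly the solution of the resulting conjugacy equation for $\phi$ (by the implicit function theorem at a regular singularity, and by its sectoral Borel-summable version, Proposition~\ref{prop:LT-IFT}, together with the gluing of the sectoral solutions, at an irregular one). This is how the paper proceeds: Theorem~\ref{thm-A} is deduced as a corollary of Theorems~\ref{thm-B1}, \ref{thm-B2} and~\ref{thm-B3}, i.e.\ by computing both classifications --- projective monodromy/Stokes data for the LDE versus monodromy/Stokes matrices for the system --- and observing that, the trace $p$ being a formal invariant, the two sets of data determine each other. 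Your outline does not contain a substitute for that analytic step.
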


Let us stress that we consider equivalence by linear point transformations \eqref{eq:LT-ODEpoint} for equations, and by gauge transformations \eqref{eq:LT-systemgauge} for systems. By \emph{formal equivalence} it is meant that the diffeomorphism $\phi(x)$ and the function $t(x)$ in \eqref{eq:LT-ODEpoint}, resp. $T(x)$ in \eqref{eq:LT-systemgauge}, are formal power series of $x$.

On the other hand, for degenerate irregular singularities it is possible to have analytically inequivalent LDEs with analytically equivalent companion systems (Example~\ref{example:LT-1}).

In the following section we describe the moduli space of analytic equivalence of regular and non-degenerate irregular singularities of LDEs. In the light of Theorem~\ref{thm-A}, these results are parallel to the classical theory of singularities of linear differential systems and are a direct reformulation of it.

\subsection{Formal and analytic classification}\label{sec:1.2} 

It is easy to verify that if $y_1(x)$, $y_2(x)$ are two linearly independent solutions to \eqref{eq:LT-irregODE} then one can express $\Delta(x)$ \eqref{eq:LT-Delta} as
$$\Delta(x)=-2\Cal S_\nu\big(\tfrac{y_1}{y_2}\big)(x),\quad\text{where }\
\Cal S_\nu(f)=\delta_\nu\big(\tfrac{\delta_\nu^2f}{\delta_\nu f}\big)-\tfrac{1}{2}\big(\tfrac{\delta_\nu^2f}{\delta_\nu f}\big)^2$$
is the \emph{``Schwarzian derivative''} associated to $\delta _\nu$.
The point transformation \eqref{eq:LT-ODEpoint} relate two such LDEs by
\begin{align}
p&=\psi\cdot\tilde p\circ\phi+\tfrac{\delta_\nu\psi}{\psi}-2\tfrac{\delta_\nu t}{t},\label{eq:LT-transformationp}\\
q&=\psi^2\cdot\tilde q\circ\phi+\big[\psi\cdot\tilde p\circ\phi+\tfrac{\delta_\nu\psi}{\psi}\big]\tfrac{\delta_\nu t}{t}-\tfrac{\delta_\nu^2 t}{t},\label{eq:LT-transformationq}\\
\Delta&=\psi^2\cdot\tilde{\Delta}\circ\phi-2\delta_\nu\big(\tfrac{\delta_\nu\psi}{\psi}\big)+\big(\tfrac{\delta_\nu\psi}{\psi}\big)^2,
\label{eq:LT-transformationDelta}
\end{align}
where $\psi(x)=\tfrac{\delta_\nu\phi}{\phi^{\nu+1}}$, i.e. $\delta_\nu=\psi(x)\tilde\delta_\nu$.
In fact, the formula \eqref{eq:LT-transformationDelta} is just the transformation rule for the above ``Schwarzian derivative''
$$\Cal S_\nu(f\circ\phi)=\psi^2\cdot\Cal S_\nu(f)\circ\phi+\delta_\nu\big(\tfrac{\delta_\nu\psi}{\psi}\big)-\tfrac12\big(\tfrac{\delta_\nu\psi}{\psi}\big)^2.$$
In this notation, the operator $\Cal S_{-1}$ is  the usual Schwarzian derivative for which the transformation rule reads 
$\Cal S_{-1}(f\circ\phi)=\left(\frac{d\phi}{dx}\right)^2\!\!\cdot\Cal S_{-1}(f)\circ\phi+\Cal S_{-1}(\phi),$
and $$\Delta(x)=-2x^{2\nu+2}\Cal S_{-1}(\tfrac{y_1}{y_2})-(\nu^2-1)x^{2\nu}.$$

\begin{remark}
The point transformations preserving the space of trace-free LDEs 	$\delta_\nu^2y=\tfrac{\Delta(x)}{4} y$
are of the form $x\mapsto\phi(x)$, $y\mapsto c\,\psi(x)^{\frac12}y$ with $\psi(x)=\tfrac{\delta_\nu\phi}{\phi^{\nu+1}}$, 
i.e. up to the multiplication by a constant $c\in\C\sminus\{0\}$ they act as transformations of the $(-\tfrac12)$-differential $y(x)\delta_\nu^{\frac{1}{2}}$, as was observed in \cite{HS}. 
\end{remark}

\begin{notation}
While we denote $\delta_\nu^2=\delta_\nu\delta_\nu,\ \delta_\nu^3=\delta_\nu\delta_\nu\delta_\nu$ etc. higher order differential operators defined by composition of $\delta_\nu$, at the same time we also write 
$\delta_\nu^{-2}=\frac{(dx)^2}{x^{2\nu+2}}$, and $\delta_\nu^{\frac{1}{2}}=x^{\frac{\nu+1}{2}}(dx)^{-\frac12}$ 
for formal powers of the differential form $\delta_\nu^{-1}=\frac{dx}{x^{\nu+1}}$. This should not cause any confusion.
\end{notation}	

\medskip

From \eqref{eq:LT-transformationp} and \eqref{eq:LT-transformationDelta} one can see that 
$$p(x)=\psi(x)\cdot\tilde p(\phi(x)) +O(x^{\nu+1}),\quad 
\Delta(x)=\psi(x)^2\cdot\tilde\Delta(\phi(x))+O(x^{2\nu+1}).$$
The natural objects to consider as invariants are the meromorphic form
$J_0^\nu p(x)\cdot\delta_\nu^{-1}$
and the meromorphic quadratic differential
$J_0^{2\nu}\Delta(x)\cdot\delta_\nu^{-2},$
where $J_0^{k}$ denotes the \emph{$k$-jet} of a germ at $x=0$,
on which the point transformations act as the usual transformation rule $x\mapsto\phi(x)$ for differentials.

\medskip

If $\Delta(0)\neq 0$, let 
$$\mu^2=\left(\res_{x=0}\sqrt{\Delta(x)}\,\delta_\nu^{-1}\right)^{\! 2},$$
be the \emph{square residue of the quadratic differential} $\Delta(x)\delta_\nu^{-2}$.
The following proposition is well known in literature, see e.g. \cite[Theorems 6.1, 6.3, 6.4]{Stre}.

\begin{theorem*}[Local normal form of the quadratic differential $\Delta(x)\delta_\nu^{-2}$]\label{prop:LT-Delta}~
\begin{enumerate}[wide=0pt, leftmargin=\parindent]
	\item If $\nu=0$ and $\mu^2=\Delta(0)\neq0$ then there exists an analytic transformation $x\mapsto\phi(x)$ such that in the new variable
	$$\Delta(x)\delta_0^{-2}=\mu^2\delta_0^{-2}.$$
	\item If $\nu>0$ then there exists an analytic transformation $x\mapsto\phi(x)$ such that in the new variable
	$$\Delta(x)\delta_\nu^{-2}=(1+\mu x^\nu)^2\delta_\nu^{-2},$$
	where $\mu\in\C$ is a root of the square residue $\mu^2$, determined up to the $\pm$ sign.
	\item If $\Delta(0)=0$, $\frac{d\Delta}{dx}(0)\neq 0$, then there exists an analytic transformation $x\mapsto\phi(x)$ such that in the new variable
	$$\Delta(x)\delta_\nu^{-2}=x\,\delta_\nu^{-2}.$$
\end{enumerate}
\end{theorem*}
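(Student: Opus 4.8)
The plan is to reduce the statement, in all three cases, to a classical local normal form for a meromorphic $1$-form. The quadratic differential $\Delta(x)\,\delta_\nu^{-2}$ is an intrinsic object: under a coordinate change $\tilde x=\phi(x)$ it is simply rewritten as $\tilde\Delta(\tilde x)\,\tilde\delta_\nu^{-2}$ with $\tilde\delta_\nu^{-1}=\tfrac{d\tilde x}{\tilde x^{\nu+1}}$, so the assertion is precisely that $\phi$ can be chosen so that $\tilde\Delta$ becomes the indicated polynomial. First I would pass to a square root $\omega:=\sqrt{\Delta(x)}\,\delta_\nu^{-1}=\tfrac{\sqrt{\Delta(x)}}{x^{\nu+1}}\,dx$, a $1$-form determined up to the sign of the branch of $\sqrt{\Delta}$ and satisfying $\omega^2=\Delta(x)\,\delta_\nu^{-2}$; it then suffices to produce an analytic diffeomorphism $\phi$ with $\phi(0)=0$, $\phi'(0)\neq 0$ and $\phi^*\omega_N=\pm\,\omega$, where $\omega_N$ is a square root of the target, since squaring yields $\phi^*(\Delta_N\,\delta_\nu^{-2})=\Delta(x)\,\delta_\nu^{-2}$.

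In Cases~1 and 2, where $\Delta(0)\neq 0$, the form $\omega$ is genuinely meromorphic, with a pole of order $\nu+1$ at $0$; this is exactly the classical problem of bringing a holomorphic vector field with an isolated zero (the dual of $\omega$) to its normal form, which admits no analytic moduli in dimension one. I would argue via the ``time function'' $t=\int\omega$. After fixing a branch of $\sqrt{\Delta}$ (whence the $\pm$ ambiguity of $\mu$) and, when $\nu\geq 1$, a preliminary scaling $x\mapsto cx$ normalizing the leading coefficient of $\omega$ to $1$, one chooses the primitive $t$ so that
$$t(x)=-\tfrac1{\nu x^\nu}+(\text{lower order poles})+\mu\log x+\rho(x),\qquad \rho\ \text{analytic},\ \ \rho(0)=0$$
(for $\nu=0$ the polar terms are absent, $t(x)=\mu\log x+\rho(x)$), with $\mu=\res_{x=0}\omega$ the chosen root of the square residue; the primitive of the normal form is $t_N(w)=-\tfrac1{\nu w^\nu}+\mu\log w$, resp. $\mu\log w$ for $\nu=0$. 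Seeking $\phi(x)=x\bigl(1+\sigma(x)\bigr)$, the requirement $t_N\!\circ\!\phi=t$ becomes, after multiplying by $x^\nu$, an equation $G(x,\sigma)=x^\nu\bigl(t(x)-t_N(x)\bigr)$ whose right-hand side is analytic and vanishes at $0$, while $G$ is analytic near $(0,0)$ with $G(x,0)\equiv 0$ and $\partial_\sigma G(0,0)\neq 0$; the analytic implicit function theorem then yields a unique analytic $\sigma$ with $\sigma(0)=0$ (for $\nu=0$ this is simply $\phi(x)=x\,e^{\rho(x)/\mu}$). Then $\phi^*\omega_N=d(t_N\!\circ\!\phi)=dt=\omega$, and squaring gives the normal forms $\mu^2\,\delta_0^{-2}$ (Case~1) and $(1+\mu w^\nu)^2\,\delta_\nu^{-2}$ (Case~2).

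In Case~3, where $\Delta(0)=0$ and $\Delta'(0)\neq 0$, write $\Delta(x)=x\,\Delta_1(x)$ with $\Delta_1(0)\neq 0$; then $\omega=\sqrt{\Delta_1(x)}\,x^{-\nu-1/2}\,dx$ has a square-root branch point at $0$, so I would first pass to the ramified coordinate $s=x^{1/2}$. Using $\delta_\nu^{-2}=4\,\delta_{2\nu}^{-2}$ and $s^2\delta_{2\nu}^{-2}=\delta_{2\nu-1}^{-2}$, the quadratic differential becomes $4\,\Delta_1(s^2)\,\delta_{2\nu-1}^{-2}$, which falls under Case~2 with Poincar\'e rank $2\nu-1$ when $\nu\geq 1$ (and under the trivial straightening at a regular point when $\nu=0$, $4\Delta_1(s^2)\,(ds)^2$ being then holomorphic and non-vanishing). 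Since $\sqrt{\Delta_1(s^2)}$ is even in $s$, the corresponding $1$-form has vanishing residue, so the normal form in $s$ is $\delta_{2\nu-1}^{-2}$; and because the whole construction is invariant under $s\mapsto -s$, its (unique) normalizing change of coordinate $\tilde s=\phi_s(s)$ is odd, hence $\tilde x:=\phi_s(x^{1/2})^2$ is a well-defined analytic function of $x$ with $\tilde x(0)=0$, $\tilde x'(0)\neq 0$. Rewriting $\delta_{2\nu-1}^{-2}$ through $\tilde x=\tilde s^2$ yields $\tfrac14\,\tilde x\,\delta_\nu^{-2}$, and a final constant rescaling of $\tilde x$ produces $x\,\delta_\nu^{-2}$.

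The one substantive point is that all these normalizations are \emph{analytic at the singular point} $x=0$ --- away from it the statement is vacuous --- and this is precisely what the time-function argument together with the analytic implicit function theorem secures, exactly as in the classical local theory of one-dimensional holomorphic vector fields; alternatively one may simply cite \cite[Theorems~6.1, 6.3, 6.4]{Stre}. The step I expect to require the most care is the descent in Case~3: verifying that the $\Z/2$-equivariance of the construction genuinely allows the normalizing coordinate change to pass from $s=x^{1/2}$ back to $x$, and keeping track of the constants introduced by that reduction.
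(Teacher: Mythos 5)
Your argument is correct. Note, however, that the paper does not prove this statement at all: it is labelled as well known and referred to \cite[Theorems 6.1, 6.3, 6.4]{Stre}, so there is no ``paper proof'' to match. What you supply is a legitimate self-contained proof by rectifying the square-root $1$-form $\omega=\sqrt{\Delta}\,\delta_\nu^{-1}$ via its time function and the analytic implicit function theorem, which is in fact the same technique the paper uses in its proofs of Theorem~\ref{thm-B1} and Proposition~\ref{prop:LT-formalirreg} (solving $\log$-type conjugacy equations for $\phi(x)=x(1+g(x))$). All the delicate points check out: in Case~2 the function $G(x,\sigma)=\tfrac1\nu\bigl(1-(1+\sigma)^{-\nu}\bigr)+\mu x^\nu\log(1+\sigma)$ is analytic with $\partial_\sigma G(0,0)=1$, and the right-hand side $x^\nu\bigl(t-t_N\bigr)$ is analytic vanishing at $0$ precisely because the leading polar coefficients and the residues of $\omega$ and $\omega_N$ have been matched beforehand; in Case~3 the Laurent expansion of the time function in $s=x^{1/2}$ contains only odd powers (no logarithm, since the residue of the lifted form vanishes by parity), so the uniqueness clause of the implicit function theorem forces $\phi_s(-s)=-\phi_s(s)$ and the descent to an analytic change of the variable $x$ goes through, with the factor $\tfrac14$ removed by a rescaling $\hat x=a\tilde x$ with $a^{2\nu-1}=4$. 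The only cosmetic caveat is that your normalization absorbs a preliminary linear scaling $x\mapsto cx$ (with $c^\nu=\sqrt{\Delta(0)}$) into the final diffeomorphism, and that the choice of branch of $\sqrt{\Delta}$ is exactly what produces the advertised $\pm$ ambiguity of $\mu$ --- both points you state correctly.
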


In the situation we consider (regular or non-degenerate irregular singularities) it turns out that the simultaneous equivalence class of the pair of jets of forms $J_0^\nu p(x)\cdot\delta_\nu^{-1}$ and $J_0^{2\nu}\Delta(x)\cdot\delta_\nu^{-2}$
is determined by the pair $J_0^\nu p(x)\cdot\delta_\nu^{-1}$ and $J_0^{\nu}\Delta(x)\cdot\delta_\nu^{-2}$.

\begin{definition}[Formal invariants]
Let $J_0^{k}f(x)$ denote the \emph{$k$-jet} of a germ $f(x)$ at $x=0$.

We define the \emph{formal invariant} of the LDE \eqref{eq:LT-irregODE} as the simultaneous equivalence class of the pair of meromorphic forms
\begin{equation}
J_0^{\nu}p(x)\cdot\delta_\nu^{-1},\quad J_0^{\nu}\Delta(x)\cdot\delta_\nu^{-2},
\end{equation}
with respect to the action of analytic diffeomorphisms $x\mapsto\phi(x)$ and jet restriction. 

\begin{itemize}
\item If $\nu=0$, then the formal invariant is given by the pair $p(0),\ \Delta(0)=\mu^2$,
and will be identified with the pair of roots $\{\lambda_1,\lambda_2\}$ of $\lambda^2-p(0)\lambda-q(0)=0$.

\item If $\nu>0$ and  the irregular singularity is non-resonant, $\Delta(0)\neq 0$, 
let 
$$\lambda_j(x)=\lambda_j^{(0)}+\ldots+x^\nu\lambda_j^{(\nu)},\quad j=1,2,$$
be the $\nu$-jets of the roots of the characteristic polynomial $\lambda^2-p(x)\lambda-q(x)$. 
Then $J_0^{\nu}p(x)=\lambda_1(x)+\lambda_2(x)$, $J_0^{\nu}\sqrt{\Delta}(x)=\lambda_2(x)-\lambda_1(x),$
and the formal invariant can be identified with the equivalence class of the pair
$$\{\lambda_1(x)\delta_\nu^{-1},\ \lambda_2(x)\delta_\nu^{-1}\}$$
with respect to the action of analytic diffeomorphisms $x\mapsto\phi(x)$ and jet restriction. 
Moreover, one may always assume that $\Delta(x)\delta_\nu^{-2}=(1+\mu x^\nu)^2\delta_\nu^{-2}$ is in the normal form of Proposition~\ref{prop:LT-Delta} and
\begin{equation}\label{eq:LT-canonical}
\lambda_2(x)-\lambda_1(x)=1+\mu x^\nu.
\end{equation}
Such pair $\{\lambda_1(x)\delta_\nu^{-1},\ \lambda_2(x)\delta_\nu^{-1}\}$, called in \emph{canonical form}, is uniquely determined up to the action of the rotations $x\mapsto e^{\frac{l\pi i}{\nu}}x$, $l\in\Z_{2\nu}$.
In particular, if the equation is trace-free, then $\lambda_1(x)+\lambda_2(x)=0$, and the equivalence class of formal invariants is completely determined by $\nu$ and $\mu^2$. 

\item If $\nu>0$, $\Delta(0)=0$ and the resonant irregular singularity is non-degenerate, $\frac{d\Delta}{dx}(0)\neq 0$, 
then one can assume that  $\Delta(x)\delta_\nu^{-2}=x\,\delta_\nu^{-2}$, in which case 
$J_0^{\nu}p(x)\cdot\delta_\nu^{-1}$ is uniquely determined up to the action of the rotations 
$x\mapsto e^{\frac{2l\pi i}{2\nu-1}}x$, $l\in\Z_{2\nu-1 }$.
\end{itemize}
\end{definition}

\begin{definition}
A linear differential equation \eqref{eq:LT-irregODE} is called \emph{reducible} if it can be written as 
\begin{equation}\label{eq:LT-reducible}
\big(\delta_\nu-\alpha_2(x)\big)\big(\delta_\nu-\alpha_1(x)\big)y=0,
\end{equation}
with $\alpha_1(x),\ \alpha_2(x)$ analytic, $\alpha_j(x)=\sum_{k=0}^{+\infty}\alpha_j^{(k)}x^k$.
\end{definition}

\subsubsection{Regular singularities}

\begin{definition}[Projective monodromy]
	Let $y_1(x),\ y_2(x)$ be two linearly independent solutions near a point $x_0\in U^*$ of some pointed neighborhood $U^*$ of the origin, and let $f(x):=\frac{y_2(x)}{y_1(x)}$.
	For a loop $\gamma\in\pi_1(U^*,x_0)$, the analytic continuation of $f(x)$ along $\gamma$ acts on $f(x)$ as
	$$f(\gamma\cdot x)=\rho_\gamma\big(f(x)\big),\qquad\text{for some }\ \rho_\gamma\in\PGL_2(\C).$$
	The map $\rho:\gamma\mapsto\rho_\gamma$ is the \emph{projective monodromy representation} $$\rho:\pi_1(U^*,x_0)\to\PGL_2(\C)$$ of the LDE.
	This representation, which is the projectivization of the monodromy representation of the companion system, is well-defined up to conjugacy in $\PGL_2(\C)$. 
	
	Let $\gamma_0$ be a positively oriented simple loop generating $\pi_1(U^*,x_0)$. Then $\rho_{\gamma_0}$ is conjugated to either
	\begin{itemize}
		\item{(i)} $f\mapsto cf$, for some $c\in\C^*$, or
		\item{(ii)} $f\mapsto f+1$.
	\end{itemize}
	In the case \textit{(i)} we call the projective monodromy \emph{diagonalizable}, and in the case \textit{(ii)} \emph{non-diagonalizable}.
\end{definition}

\begin{lemma}\label{lemma-1}
\begin{enumerate}[wide=0pt, leftmargin=\parindent]
		\item Strongly non-resonant regular singularities (i.e. with $\mu=\lambda_2-\lambda_1\notin\Z$) have diagonalizable projective monodromy (conjugated to $f\mapsto e^{2\pi i\mu}f$).
		\item  A regular singularity has non-diagonalizable projective monodromy if and only if its formal fundamental solution contains a logarithmic term. 
\end{enumerate}
\end{lemma}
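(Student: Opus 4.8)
The plan is to derive both statements from the classical Frobenius description of a fundamental system of solutions at a Fuchsian point: one writes $f=\frac{y_2}{y_1}$ explicitly and reads off its behaviour under the generator $\gamma_0$ of $\pi_1$. First I would record the small piece of bookkeeping linking the two normalizations of $\mu$: since $\nu=0$ one has $\Delta(0)=p(0)^2+4q(0)=(\lambda_1+\lambda_2)^2-4\lambda_1\lambda_2=(\lambda_1-\lambda_2)^2$, so $\sqrt{\Delta(0)}=\pm(\lambda_2-\lambda_1)$ and the singularity is strongly non-resonant exactly when $\mu:=\lambda_2-\lambda_1\notin\Z$. Then I would recall Frobenius's theorem (see e.g.\ \cite{IlYa,SP}): the equation $\delta_0^2y-p\,\delta_0y-q\,y=0$ has a fundamental system of one of the following shapes, according to the exponents $\lambda_1,\lambda_2$:
\begin{itemize}
\item[(a)] if $\lambda_1-\lambda_2\notin\Z$: \ $y_j(x)=x^{\lambda_j}h_j(x)$ with $h_j$ analytic and $h_j(0)=1$;
\item[(b)] if $\lambda_1=\lambda_2=:\lambda$: \ $y_1(x)=x^{\lambda}h_1(x)$ with $h_1(0)=1$, and $y_2(x)=x^{\lambda}h_2(x)+y_1(x)\log x$ with $h_2$ analytic;
\item[(c)] if $k:=\lambda_1-\lambda_2\in\Z_{>0}$: \ $y_1(x)=x^{\lambda_1}h_1(x)$ with $h_1(0)=1$, and $y_2(x)=x^{\lambda_2}h_2(x)+c\,y_1(x)\log x$ with $h_2$ analytic, $h_2(0)=1$, and $c\in\C$ possibly zero.
\end{itemize}
For a Fuchsian point these series converge, so here ``formal'' and ``analytic'' fundamental solutions agree; the only data beyond the exponents is the constant $c$ of case (c).

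Next I would compute $f=\frac{y_2}{y_1}$ and its image $\rho_{\gamma_0}$ under $x\mapsto e^{2\pi i}x$:
\begin{itemize}
\item[(a)] $f(x)=x^{\mu}\,\tfrac{h_2(x)}{h_1(x)}$ with $\tfrac{h_2}{h_1}$ analytic, nonzero at $0$, so $\rho_{\gamma_0}\colon f\mapsto e^{2\pi i\mu}f$ --- diagonalizable, of type (i) with eigenvalue $e^{2\pi i\mu}$, and no logarithm present; this is exactly part 1;
\item[(b)] $f(x)=\tfrac{h_2(x)}{h_1(x)}+\log x$, so $\rho_{\gamma_0}\colon f\mapsto f+2\pi i$, conjugate in $\PGL_2(\C)$ to $f\mapsto f+1$ --- non-diagonalizable, of type (ii), and a logarithm is present;
\item[(c)] $f(x)=x^{-k}\,\tfrac{h_2(x)}{h_1(x)}+c\log x$, and since $k\in\Z$ the factor $x^{-k}$ is single-valued, so $\rho_{\gamma_0}\colon f\mapsto f+2\pi i c$; if $c=0$ then $\rho_{\gamma_0}=\id$ (type (i), eigenvalue $1$) and no logarithm is present, while if $c\neq0$ then $\rho_{\gamma_0}$ is conjugate to $f\mapsto f+1$, of type (ii), and a logarithm is present.
\end{itemize}
Collating the three cases, $\rho_{\gamma_0}$ is non-diagonalizable if and only if the fundamental system contains a logarithmic term, which is part 2.

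I do not expect a real obstacle; the two points worth care are bookkeeping. One must cite Frobenius in the precise form above, including that in the resonant integer case the coefficient $c$ genuinely may or may not vanish (both happen, e.g.\ for Euler-type vs.\ Bessel-type equations). And one should note that ``the formal fundamental solution contains a logarithmic term'' is an attribute of the equation, not of a chosen basis: a change of basis by a constant matrix in $\GL_2(\C)$ cannot strip a logarithmic term from every solution at once; equivalently the property is encoded by the conjugacy class of the formal monodromy $e^{2\pi i\hat\Lambda}$, with $\hat\Lambda$ the exponent matrix in the formal fundamental solution $\hat Y=\hat H(x)\,x^{\hat\Lambda}$ of the companion system \eqref{eq:LT-irregsyst}, a logarithm occurring precisely when $\hat\Lambda$ has a non-trivial Jordan block, i.e.\ when $e^{2\pi i\hat\Lambda}$ --- hence $\rho_{\gamma_0}$ --- is non-diagonalizable. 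Running the argument through this formal monodromy of the companion system gives an equivalent, slightly more structural proof that avoids the explicit case split but uses the same classical input.
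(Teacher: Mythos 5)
Your proof is correct and is essentially the paper's argument: the paper derives the same explicit solution basis $y_1=T_{11}(x)x^{\lambda_1}$, $y_2=T_{12}(x)x^{\lambda_2}+\epsilon T_{11}(x)x^{\lambda_1}\log x$ from the normal form of the companion system (\cite[Theorem 16.16]{IlYa}, with $\epsilon\in\{0,1\}$ playing the role of your constant $c$) and then observes that non-resonance $\Leftrightarrow$ $\epsilon=0$ $\Leftrightarrow$ the monodromy is diagonalizable. Your route via Frobenius series and the explicit action on $f=y_2/y_1$ is the same computation, with the welcome extra care about basis-independence of the logarithmic term.
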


\begin{theoremB1}[Analytic classification of regular singularities, $\nu=0$]\label{thm-B1}~
\begin{enumerate}[wide=0pt, leftmargin=\parindent]
\item Two LDEs \eqref{eq:LT-irregODE} with regular singularities are analytically equivalent if and only if they have the same pair of formal invariants $\{\lambda_1,\lambda_2\}$ and their projective monodromies are conjugated, i.e. either they are both diagonalizable or both non-diagonalizable.

\item A LDE \eqref{eq:LT-irregODE} with a regular singularity is always reducible. It is analytically equivalent to one of the following normal forms.
\begin{enumerate}[label=(\alph*)]
\item Diagonalizable projective monodromy ($\lambda_1\neq\lambda_2$): 
 	\begin{equation}\label{eq:LT-B1a} 
 	\Big(\delta_0-\lambda_2\Big)\Big(\delta_0-\lambda_1\Big)y=0,  
 	\end{equation}
	whose basis of solutions is $y_1(x)=x^{\lambda_1}$, $y_2(x)=x^{\lambda_2}$.
\item If $\lambda_1=\lambda_2$ (then the projective monodromy is non-diagonalizable): 
	 \begin{equation}\label{eq:LT-B1b}
 	\Big(\delta_0-\lambda_1\Big)^2y=0, 
 	\end{equation}
	whose basis of solutions is $y_1(x)=x^{\lambda_1}$, $y_2(x)=x^{\lambda_1}\log x$.
\item If $\lambda_1-\lambda_2=k\in\Z_{>0}$ and the  projective monodromy is non-diagonalizable:      
 	\begin{equation}\label{eq:LT-B1c}
 	\Big(\delta_0-\lambda_2+k\frac{x^k}{1-x^k}\Big)\Big(\delta_0-\lambda_1\Big)y=0,
 	\end{equation}
	whose basis of solutions is $y_1(x)=x^{\lambda_1}$, $y_2(x)=x^{\lambda_2}+kx^{\lambda_1}\log x$. 
	
	Alternatively, it is also analytically equivalent to 
 	\begin{equation}\label{eq:LT-B1d}
 	\Big(\delta_0-\lambda_2+kx^k\Big)\Big(\delta_0-\lambda_1\Big)y=0.
 	\end{equation}
\end{enumerate}
\end{enumerate}
\end{theoremB1}


\subsubsection{Non-resonant irregular singularities}

\begin{proposition}\label{prop:LT-formalirreg}
Two non-resonant irregular LDEs \eqref{eq:LT-irregODE} are formally equivalent if and only if their pairs of formal invariants 
$\{\lambda_1(x)\delta_\nu^{-1},\ \lambda_2(x)\delta_\nu^{-1}\}$ are in the same equivalence class.
The formal transformation 
\begin{equation}\label{eq:LT-formal}
\tilde x=\hat\phi(x)=\sum_{j=1}^{+\infty}\phi^{(j)}x^j,\quad \tilde y=\hat t(x)\cdot y=\sum_{j=0}^{+\infty}t^{(j)}x^j\cdot y,\qquad
\phi^{(1)},t^{(0)}\neq0,
\end{equation}
between the two LDEs is then Borel $\nu$-summable, with singular directions $\arg(x)=\beta$ among those where 
$\Im\big(e^{-\nu\beta i}(\lambda_2^{(0)}-\lambda_1^{(0)})\big)=0$.

Assuming that their formal invariants are equal, then a formal transformation $\hat{\phi}$, $\hat t$ exists with
$\hat\phi(x)=x+tx^{\nu+1}+O(x^{\nu+2})$ which is unique (up to $y\mapsto cy$) for any $t\in\C$. 
\end{proposition}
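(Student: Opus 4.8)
The plan is to prove the three assertions by working with the transformation rules \eqref{eq:LT-transformationp}, \eqref{eq:LT-transformationDelta} (which are equivalent to \eqref{eq:LT-transformationp}--\eqref{eq:LT-transformationq}, since $\Delta=p^2+4q-2\delta_\nu p$), combined with Theorem~\ref{thm-A} and the classical summability theory for linear systems. The easy half of the classification comes first: if a formal transformation \eqref{eq:LT-formal} relates the two equations, then $\psi=\tfrac{\delta_\nu\hat\phi}{\hat\phi^{\,\nu+1}}$ has $\psi(0)=(\phi^{(1)})^{-\nu}\neq0$ and $\tfrac{\delta_\nu\psi}{\psi}=O(x^{\nu+1})$, while $\tfrac{\delta_\nu\hat t}{\hat t}=O(x^{\nu+1})$ because $t^{(0)}\neq0$. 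Reducing \eqref{eq:LT-transformationp} and \eqref{eq:LT-transformationDelta} modulo $x^{\nu+1}$ then gives $J_0^\nu p=J_0^\nu(\psi\,\tilde p\!\circ\!\hat\phi)$ and $J_0^\nu\Delta=J_0^\nu(\psi^2\,\tilde\Delta\!\circ\!\hat\phi)$, i.e.\ $\hat\phi$ carries the pair of $\nu$-jet forms of one equation onto that of the other, and a formal diffeomorphism acts on $\nu$-jets of forms exactly as an analytic one does; hence the formal invariants agree.

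For the converse, and for the normalization, I would solve the equations order by order. Since the formal invariants agree, after a preliminary analytic (indeed polynomial) change of the variable $x$ we may assume the two equations have the \emph{same} $\nu$-jets $J_0^\nu p\cdot\delta_\nu^{-1}$, $J_0^\nu\Delta\cdot\delta_\nu^{-2}$, in the canonical form \eqref{eq:LT-canonical}; it suffices to produce the formal transformation in this situation. Equation \eqref{eq:LT-transformationDelta} involves $\hat\phi$ alone, so I solve it first for $\hat\phi(x)=x+\sum_{j\geq2}\phi^{(j)}x^j$. Once $\phi^{(2)}=\cdots=\phi^{(k)}=0$ one has $\psi=1+(k-\nu)\phi^{(k+1)}x^{k}+\cdots$, and, since the Schwarzian-type corrections $-2\delta_\nu(\tfrac{\delta_\nu\psi}{\psi})+(\tfrac{\delta_\nu\psi}{\psi})^2$ are of order $\geq x^{2\nu+1}$, the coefficient of $x^k$ in \eqref{eq:LT-transformationDelta} for $1\leq k\leq\nu-1$ reduces to $2(k-\nu)\Delta(0)\,\phi^{(k+1)}=0$; as $\Delta(0)\neq0$ and $k\neq\nu$ this forces $\phi^{(k+1)}=0$. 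Thus $\phi^{(2)}=\cdots=\phi^{(\nu)}=0$ is forced, the coefficient of $x^\nu$ is then automatically satisfied and imposes no condition on $\phi^{(\nu+1)}$, which therefore stays a free parameter $t:=\phi^{(\nu+1)}$; and for $m\geq0$ the coefficient of $x^{\nu+1+m}$ has the form $2\Delta(0)\,\phi^{(\nu+2+m)}=(\text{a polynomial in }t,\phi^{(\nu+2)},\dots,\phi^{(\nu+1+m)}\text{ and the known data})$, which (again using $\Delta(0)\neq0$) determines $\hat\phi$ uniquely from $t$. With $\hat\phi$ in hand, \eqref{eq:LT-transformationp} reads $2\tfrac{\delta_\nu\hat t}{\hat t}=\psi\,p(\hat\phi)-p+\tfrac{\delta_\nu\psi}{\psi}$, whose right-hand side is $O(x^{\nu+1})$ because the $\nu$-jets of $p$ coincide and $\hat\phi\equiv x$, $\psi\equiv1\pmod{x^{\nu+1}}$; hence $\hat t$ is the exponential of an explicit power series, determined up to the multiplicative constant $c=t^{(0)}\in\C^*$. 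This construction being effective, it establishes both the "if" direction and the last assertion of the statement.

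For Borel $\nu$-summability I would invoke Theorem~\ref{thm-A}: the equations are formally equivalent iff their companion systems \eqref{eq:LT-irregsyst} are, and for non-resonant irregular systems the formal gauge transformation between two systems sharing the diagonal normal form with entries $\lambda_1(x)$, $\lambda_2(x)$ is $\nu$-summable, with singular directions among the Stokes rays --- those $\arg x=\beta$ along which some $\exp\!\int(\lambda_i-\lambda_j)\delta_\nu^{-1}\sim\exp\!\bigl(-\tfrac{\lambda_i^{(0)}-\lambda_j^{(0)}}{\nu}x^{-\nu}\bigr)$ is of maximal growth/decay, which forces $\Im\bigl(e^{-\nu\beta i}(\lambda_2^{(0)}-\lambda_1^{(0)})\bigr)=0$ (see e.g.\ \cite{Ba,SP,BJL12}). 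By the essential uniqueness established above and the (sectorial) constructions underlying Theorem~\ref{thm-A}, the formal transformation $\hat\phi$, $\hat t$ of the equations inherits the same summability and the same location of singular directions; alternatively one quotes $\nu$-summability of the formal solution $\hat\phi$ of the nonlinear equation \eqref{eq:LT-transformationDelta} directly (its irregular part having the single level $\nu$) and recovers $\hat t$ from \eqref{eq:LT-transformationp} by a $\nu$-summable quadrature.

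The step I expect to cost the most is the summability assertion, which genuinely relies on the deep classical theory and on matching its conclusions for systems with the intrinsically point-transformation data of the equations; the algebraic part is in principle routine, but one must track the order-by-order structure carefully enough to see why $\nu+1$ is the unique free index --- this is exactly where the vanishing of the factor $k-\nu$ at $k=\nu$, equivalently the non-resonance $\Delta(0)=(\lambda_2^{(0)}-\lambda_1^{(0)})^2\neq0$, does the work.
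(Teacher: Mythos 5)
Your treatment of the purely formal part is correct but takes a genuinely different route from the paper. You solve the transformation rules \eqref{eq:LT-transformationDelta} and \eqref{eq:LT-transformationp} for the invariants $(\Delta,p)$ order by order: the new unknown $\phi^{(k+1)}$ enters the coefficient of $x^k$ in \eqref{eq:LT-transformationDelta} with factor $2(k-\nu)\Delta(0)$ (for $k=\nu+1+m$ the factor is $2(m+1)\Delta(0)$, not $2\Delta(0)$ as you wrote, but it is still nonzero), so non-resonance $\Delta(0)\neq0$ forces $\phi^{(2)}=\dots=\phi^{(\nu)}=0$, leaves $\phi^{(\nu+1)}=t$ free, and determines all higher coefficients; $\hat t$ then comes from a quadrature because the right-hand side of \eqref{eq:LT-transformationp} is divisible by $x^{\nu+1}$. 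This recursion is sound and more elementary than what the paper does. The paper never touches the recursion: it takes the formal solution basis \eqref{eq:LT-irregsol} supplied by the sectoral normalization theory of the companion system and characterizes $(\hat\phi,\hat t)$ by the requirement that it match solution bases, which reduces to a single scalar implicit equation $\log\big(\tfrac{\hat T_{12}}{\hat T_{11}}\big)(x)=F(x,\hat g)$ with $\tfrac{\partial F}{\partial\hat g}=\lambda_2^{(0)}-\lambda_1^{(0)}\neq0$ at the origin. That formulation is chosen because it carries the summability statement for free, and this is exactly where your proposal has a genuine gap.

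Concretely, your argument for Borel $\nu$-summability of $(\hat\phi,\hat t)$ does not close. Invoking Theorem~\ref{thm-A} is circular in the paper's architecture (Theorem~\ref{thm-A} is deduced from Theorems~\ref{thm-B1}--\ref{thm-B3}, which rest on the present proposition), and even setting that aside it does not do the job: the classical theory gives $\nu$-summability of the formal \emph{gauge} transformation $\hat T(x)$ (fixing $x$) between the companion systems, and the \emph{point} transformation $\hat\phi$ is related to $\hat T$ nonlinearly and through a composition, so its summability does not follow by transport; the claim that $(\hat\phi,\hat t)$ ``inherits'' summability ``by essential uniqueness and the sectorial constructions'' is precisely the assertion requiring proof. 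Your alternative --- quoting summability of the formal solution of the nonlinear equation \eqref{eq:LT-transformationDelta} directly --- would need a Braaksma-type theorem plus an identification of levels and singular directions that you do not carry out. The paper's mechanism is explicit: the data entering the matching equation are the $\nu$-summable series $\hat T_{1j}$, the equation is solved sector by sector with exponentially flat mismatches, and a Ramis--Sibuya-based implicit function theorem for Borel-summable series (Proposition~\ref{prop:LT-IFT} in the Appendix) yields summability of $\hat g$, hence of $\hat\phi$ and $\hat t$, with singular directions among those of the $\hat T_{1j}$, i.e.\ among the anti-Stokes directions $\Im\big(e^{-\nu\beta i}(\lambda_2^{(0)}-\lambda_1^{(0)})\big)=0$. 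Some such device is needed to complete your argument.
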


In particular, the LDE is formally equivalent by means of a transformation \eqref{eq:LT-formal} with 
$\hat\phi(x)=x+O(x^{\nu+1})$
to the following \emph{formal normal form}
\begin{equation}\label{eq:LT-irregnormalform}
	\Big(\delta_\nu-\lambda_2(\tilde x)-\tfrac{\delta_\nu(\lambda_2(\tilde x)-\lambda_1(\tilde x))}{\lambda_2(\tilde x)-\lambda_1(\tilde x)}\Big)\Big(\delta_\nu-\lambda_1(\tilde x)\Big)\tilde y=0,
\end{equation}
i.e.
\begin{align*}
\tilde p(\tilde x)&=\lambda_1+\lambda_2+\delta_\nu\log(\lambda_2-\lambda_1), \\	
\tilde q(\tilde x)&= -\lambda_1\lambda_2+\tfrac12\delta_\nu(\lambda_1+\lambda_2)-\tfrac{\lambda_1+\lambda_2}{2}\delta_\nu\log(\lambda_2-\lambda_1),\\
\tilde\Delta(\tilde x)&=(\lambda_2-\lambda_1)^2-2\delta_\nu^2\log(\lambda_2-\lambda_1)+\left(\delta_\nu\log(\lambda_2-\lambda_1)\right)^2,
\end{align*} 
whose basis of solutions is 
$$\tilde y_j(\tilde x)=e^{\int\lambda_j(\tilde x)\delta_\nu^{-1}},\quad j=1,2.$$
The formal transformation $\hat\phi(x)=x+O(x^{\nu+1})$ is unique up to a composition with the flow of the vector field $\tfrac{1}{\lambda_2(x)-\lambda_1(x)}\delta_\nu$.

Up to an analytic change of coordinate $x\mapsto\phi(x)$, one can suppose that the pair $\{\lambda_1(x)\delta_\nu^{-1},\ \lambda_2(x)\delta_\nu^{-1}\}$
is in the canonical form 
$$\lambda_2(x)-\lambda_1(x)=1+\mu x^\nu.$$
Then the singular directions of $\hat\phi(x)$, $\hat t(x)$ are
$\beta_l=\frac{l}{\nu}\pi$, $l\in\Z$.
Let $\phi_{\Omega_l}(x)$, $t_{\Omega_l}(x)$ be the Borel sums of the formal transformation $\hat\phi$, $\hat t$, bounded and analytic on the sectors
$$\Omega_l=\left\{|\arg x-\tfrac{2l+1}{2\nu}\pi|<\tfrac{\pi}{\nu}-\eta,\ |x|<\rho_\eta \right\}, \qquad l\in\Z_{2\nu},$$
where $0<\eta<\frac{\pi}{2\nu}$ is arbitrarily small, and $\rho_\eta>0$ depends on $\eta$.
They transform the LDE to its formal normal form \eqref{eq:LT-irregnormalform}, which means that on each sector $\Omega_l$ the original LDE has a canonical basis of solutions
$$y_{j,\Omega_l}=t_{\Omega_l}(x)\cdot \tilde y_j\left(\phi_{\Omega_l}(x)\right),\qquad j=1,2.$$
We can now define projective Stokes operators of the equation, corresponding to the projectivization of the Stokes matrices of the companion system, as the operators connecting the bases on neighboring sectors  in the following way.

\begin{definition}[Projective Stokes operators]
The \emph{projective Stokes operators} are the operators $\sigma_{\beta_l}\in\PGL_2(\C)$ defined by
\begin{align*}
	f_{\Omega_{l-1}}(x)&=\sigma_{\beta_l}\left(f_{\Omega_l}(x)\right),\quad\text{for}\ \arg(x)=\tfrac{l}{\nu}\pi,\qquad l=1,\ldots,2\nu-1,\\
	f_{\Omega_{2\nu}}(e^{2\pi i}x)\cdot e^{-2\pi i\mu}&=\sigma_{\beta_0}\left(f_{\Omega_0}(x)\right),\quad\text{for}\ \arg(x)=0.
\end{align*}
where $f_{\Omega_l}(x):=\frac{y_{2,\Omega_l}(x)}{y_{1,\Omega_l}(x)}$.
They are of the form	
$$\begin{cases} \sigma_{\beta_l}:f\mapsto f+s_l &\hskip-6pt \text{if $l$ is odd, i.e. when $e^{\int (\lambda_2-\lambda_1)\delta_\nu^{-1}}$ is ``exploding'' as $x\to 0$},\\
\sigma_{\beta_l}:f\mapsto \frac{f}{1+s_l f}  &\hskip-6pt \text{if $l$ is even, , i.e. when $e^{\int (\lambda_2-\lambda_1)\delta_\nu^{-1}}$ is ``flat'' as $x\to 0$}.\end{cases}$$
Their collection $(\sigma_{\beta_0},\ldots,\sigma_{\beta_{2\nu-1}})$ is well-defined up to simultaneous conjugation by a scalar multiplication (corresponding to the non-unicity of $\hat\phi(x)$). 	
It is extended to all $l\in\Z$ by 
$$\sigma_{\beta_{l+2\nu}}(f)=e^{2\pi i\mu}\sigma_{\beta_{l}}(e^{-2\pi i\mu}f).$$
\end{definition} 

\begin{remark}
	Note that the canonical pair of solutions $y_{1,\Omega_l}(x)$, $y_{2,\Omega_l}(x)$ is up to multiplication by constants uniquely determined by their asymptotic behavior at the singular directions $\beta_l-\frac{\pi}{\nu}$ and $\beta_l+\frac{\pi}{\nu}$, one being flat at one direction the other being flat at the other direction.
\end{remark}

\begin{definition}[Symmetry group of the formal invariants]
	Let $\{\lambda_1(x)\delta_\nu^{-1},\ \lambda_2(x)\delta_\nu^{-1}\}$ be a pair of formal invariants in a canonical form.
	Let us define $G\subseteq\Z_{2\nu}$ as the subgroup of the cyclic group consisting of the elements $g\in\Z_{2\nu}$ such that
	the associated rotation $x\mapsto e^{\frac{g\pi i}{\nu}}x$ preserves the pair $\{\lambda_1(x)\delta_\nu^{-1},\ \lambda_2(x)\delta_\nu^{-1}\}$.
	Since the  pair of formal invariants in a canonical form is uniquely defined up to rotations from $\Z_{2\nu}$, which commute with $G$, the group $G$ is well-defined.
\end{definition}	

For example, if the equation is trace-free, $\lambda_1(x)+\lambda_2(x)=0$, then  either $G=\Z_{2\nu}$ if $\mu=0$, or $G=2\Z_\nu$ if $\mu\neq 0$. 

\begin{theoremB2}[Analytic classification of non-resonant irregular singularities, $\nu>0$]\label{thm-B2}~
\begin{enumerate}[wide=0pt, leftmargin=\parindent]
\item Two formally equivalent LDEs \eqref{eq:LT-irregODE} with a non-resonant irregular singularity at the origin 
and the same pair of formal invariants in canonical form $\{\lambda_1(x)\delta_\nu^{-1},\ \lambda_2(x)\delta_\nu^{-1}\}$
are analytically equivalent if and only if  their respective collections of projective Stokes operators $(\sigma_{\beta_l})_{l\in\Z}$ and $(\sigma_{\beta_l}')_{l\in\Z}$ are equivalent in the following sense:
there exist $c\in\C^*$ and $g\in G$ such that
$$\sigma_{\beta_l}'=\iota^g\circ(\tfrac{1}{c}\sigma_{\beta_{l+g}})\circ(c\iota^g)\quad\text{for all}\ l\in\Z,$$
where $G\subseteq\Z_{2\nu}$ is the symmetry group of the formal invariant and $\iota:f\mapsto \frac{1}{f}$.

\item For every pair $\{\lambda_1(x)\delta_\nu^{-1},\ \lambda_2(x)\delta_\nu^{-1}\}$ in canonical form and every collection of projective Stokes operators  $(\sigma_{\beta_0},\ldots,\sigma_{\beta_{2\nu-1}})$, there exists a LDE with a non-resonant irregular singularity of a given formal class realizing them as its analytic invariants.
\end{enumerate}
\end{theoremB2}

\begin{proposition}\label{prop:LT-reducible}~
\begin{enumerate}[wide=0pt, leftmargin=\parindent]	
\item For a non-resonant irregular singularity the following are equivalent:
\begin{enumerate}	
	\item The LDE \eqref{eq:LT-irregODE} is reducible, i.e. of the form \eqref{eq:LT-reducible} for some $\alpha_1(x)$, $\alpha_2(x)$.
	\item The LDE \eqref{eq:LT-irregODE} has a ``convergent solution'' $y(x)=e^{\int \lambda(x)\delta_\nu^{-1}}t(x)$, where $\lambda(x)$ is one of the formal invariants and $t(x)$ is a convergent power series. 
	\item The  Riccati equation	
	\begin{equation}\label{eq:LT-ricatti}
	2\delta_\nu r=r^2-\Delta(x)
	\end{equation}
	has an analytic solution $r(x)$. In this case $\alpha_1(x)=\frac{1}{2}(p(x)-r(x))$, $\alpha_2(x)=\frac{1}{2}(p(x)+r(x))$.
	\item For either all odd or all even indices $l\in\Z$ the projective Stokes operators are trivial, $\sigma_{\beta_l}=\id$.
\end{enumerate}
\item For a non-resonant irregular singularity the following are equivalent:
\begin{enumerate}	
	\item The LDE \eqref{eq:LT-irregODE} is analytically equivalent to the formal normal form \eqref{eq:LT-irregnormalform}.
	\item The LDE \eqref{eq:LT-irregODE} has a pair of ``convergent solutions'' $y(x)=e^{\int \lambda_j(x)\delta_\nu^{-1}}t_j(x)$, $j=1,2$, where $\{\lambda_1(x)\delta_\nu^{-1},\lambda_2(x)\delta_\nu^{-1}\}$ are the formal invariants and $t_j(x)$ are convergent power series.
	\item The third order linear equation
	\begin{equation}\label{eq:LT-symmetricpower}
	\delta_\nu^3 h-\Delta(x)\delta_\nu h-\tfrac{1}{2}\big(\delta_\nu\Delta(x)\big)h=0.
	\end{equation} 	
	has an analytic solution $h(x)$.
	\item The  Riccati equation	\eqref{eq:LT-ricatti} has two different analytic solutions $r_1(x)$, $r_2(x)$.
	In this case $h(x)=\frac{1}{r_2(x)-r_1(x)}$ is an analytic solution to \eqref{eq:LT-symmetricpower}.
	\item All the projective Stokes operators are trivial, $\sigma_{\beta_l}=\id$ for all $l\in\Z$.
\end{enumerate}
\end{enumerate}
\end{proposition}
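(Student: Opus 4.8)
Both parts will be reduced to a handful of elementary identities together with the companion system \eqref{eq:LT-irregsyst} and Theorem~\ref{thm-B2}; throughout we may assume the formal invariant is in canonical form, so that $\lambda_2(x)-\lambda_1(x)=1+\mu x^\nu\neq 0$. For the \emph{concrete} equivalences of Part~1: expanding \eqref{eq:LT-reducible} gives $p=\alpha_1+\alpha_2$ and $q=\delta_\nu\alpha_1-\alpha_1\alpha_2$, hence $\Delta=(\alpha_2-\alpha_1)^2-2\delta_\nu(\alpha_2-\alpha_1)$, so $r:=\alpha_2-\alpha_1$ is an analytic solution of the Riccati equation \eqref{eq:LT-ricatti}, and conversely an analytic solution $r$ of \eqref{eq:LT-ricatti} yields the factorization with $\alpha_{1,2}=\tfrac12(p\mp r)$; this is (a)$\Leftrightarrow$(c). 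For (c)$\Leftrightarrow$(b): if $r$ is analytic then $y_1=e^{\int\alpha_1\delta_\nu^{-1}}$ solves \eqref{eq:LT-irregODE}, and since $\delta_\nu r=O(x^{\nu+1})$ we have $J_0^\nu r=J_0^\nu\sqrt\Delta=\pm(\lambda_2(x)-\lambda_1(x))$, hence $J_0^\nu\alpha_1\in\{\lambda_1(x),\lambda_2(x)\}$ and $\int\alpha_1\delta_\nu^{-1}=\int\lambda_j\delta_\nu^{-1}+\int(\text{analytic})\,dx$, so $y_1=e^{\int\lambda_j\delta_\nu^{-1}}t(x)$ with $t$ convergent and $t(0)\neq 0$; conversely, from a convergent solution $y=e^{\int\lambda_j\delta_\nu^{-1}}t(x)$ one checks directly that $r:=p-2\tfrac{\delta_\nu y}{y}=p-2\lambda_j-2\,\delta_\nu\log t$ is analytic, solves \eqref{eq:LT-ricatti}, and has $r(0)=\pm(\lambda_2-\lambda_1)(0)$, which records which $\lambda_j$ occurs.

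For the Stokes equivalence (d) in Part~1 I would argue through the companion system \eqref{eq:LT-irregsyst}. Reducibility of \eqref{eq:LT-irregODE} is equivalent to the existence of an analytic $\delta_\nu$-invariant line subbundle of \eqref{eq:LT-irregsyst}: such a subbundle is spanned by a section whose first coordinate cannot vanish at $0$ (since $\delta_\nu$ annihilates constants), hence normalizable to $\transp{\big(1,\alpha_1(x)\big)}$ with $\delta_\nu\alpha_1=q+p\alpha_1-\alpha_1^2$, which is exactly the condition that $(\delta_\nu-\alpha_1)$ be the inner factor in \eqref{eq:LT-reducible}. By non-resonance the formal normal form of \eqref{eq:LT-irregsyst} is diagonal and its only invariant line subbundles are the two eigenlines, so, by the classical sectorial (Malgrange--Sibuya) description of the analytic class, \eqref{eq:LT-irregsyst} admits an invariant subbundle of a given eigenline type if and only if every Stokes matrix preserves that line, i.e.\ if and only if all the (triangular, unipotent) Stokes matrices of the corresponding parity are trivial. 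Since the projectivization of a unipotent triangular matrix is the identity exactly when the matrix is, this says precisely that $\sigma_{\beta_l}=\id$ for all odd, or for all even, $l$ — condition (d). This finishes Part~1.

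In Part~2, for (c)$\Leftrightarrow$(d): writing $\rho=r_2-r_1$ and $\sigma=r_1+r_2$, the Riccati equation gives $\delta_\nu\rho=\tfrac12\rho\sigma$ and $\delta_\nu\sigma=\tfrac14(\sigma^2+\rho^2)-\Delta$, and a short computation with $h=\rho^{-1}$ (using $\rho^2h=h^{-1}$) yields $\delta_\nu^3h-\Delta\,\delta_\nu h-\tfrac12(\delta_\nu\Delta)h=0$; moreover the two analytic Riccati solutions are distinguished by their values at $0$ (the two distinct square roots of $\Delta(0)\neq 0$), so $\rho(0)=\pm 2\sqrt{\Delta(0)}\neq 0$ and $h$ is analytic, proving (d)$\Rightarrow$(c). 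Conversely, differentiating \eqref{eq:LT-symmetricpower} shows every solution $h$ satisfies a first integral $h\,\delta_\nu^2h-\tfrac12(\delta_\nu h)^2-\tfrac12\Delta h^2=C$ with $C$ constant; $C=0$ precisely when $h$ is a constant multiple of the square of a single solution of $\delta_\nu^2y=\tfrac\Delta4 y$, and such a square is unbounded near $0$ because $\lambda_2-\lambda_1=1+\mu x^\nu\neq 0$, so for analytic $h$ necessarily $C\neq 0$, and then also $h(0)\neq 0$ (else the first integral at $x=0$ forces $C=0$); setting $r_{1,2}:=\frac{-\delta_\nu h\pm\sqrt{-2C}}{h}$ gives two distinct analytic functions which one checks solve \eqref{eq:LT-ricatti}, proving (c)$\Rightarrow$(d). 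Then (d)$\Leftrightarrow$(b) is Part~1's (c)$\Leftrightarrow$(b) applied to $r_1$ and $r_2$ separately — two distinct analytic Riccati solutions corresponding to a pair of convergent solutions of the two distinct formal types — and (b)$\Leftrightarrow$(e) is Part~1's (b)$\Leftrightarrow$(d) applied to each type: a convergent solution of type $\lambda_1$ forces all even (say) $\sigma_{\beta_l}$ to be trivial and one of type $\lambda_2$ forces all odd ones, so the pair forces $\sigma_{\beta_l}=\id$ for all $l$. Finally (a)$\Leftrightarrow$(e) is Theorem~\ref{thm-B2}(1) comparing the LDE with its formal normal form \eqref{eq:LT-irregnormalform}, whose projective Stokes operators are all trivial, and which is formally equivalent to the LDE by Proposition~\ref{prop:LT-formalirreg}.

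\emph{Main obstacle.} The elementary items — the expansion of \eqref{eq:LT-reducible}, the three ODE identities, and matching the jet $J_0^\nu r$ with $\lambda_j$ while tracking non-vanishing at the origin — are routine. The step I expect to require real care, and would write out in full, is the Stokes dictionary of Part~1: identifying LDE-reducibility with an invariant line subbundle of the companion system of the correct formal type, and matching ``half of the Stokes matrices vanish'' on the linear-system side with ``every odd, or every even, projective Stokes operator is trivial'' on the projectivized side, while keeping track of the rotations preserving the canonical form and of the symmetry group $G$.
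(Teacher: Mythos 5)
Your proposal is correct and follows essentially the same route as the paper: the expansion of the factorization \eqref{eq:LT-reducible} and the Riccati identities for Part~1, reducibility of the companion system read off from the triangularity of its Stokes matrices for item~(d), and the symmetric-square equation \eqref{eq:LT-symmetricpower} together with Theorem~\ref{thm-B2} for Part~2. The only organizational difference is in Part~2, (c)$\Rightarrow$(d), where you use the first integral $h\,\delta_\nu^2h-\tfrac12(\delta_\nu h)^2-\tfrac12\Delta h^2=C$ and the observation that $C=0$ would force $h$ to be proportional to the square of a solution of $\delta_\nu^2y=\tfrac{\Delta}{4}y$ (hence not analytic), whereas the paper recovers $\Delta$ from $h$ via the one-parameter family $\Delta(x,c^2)$ and a dimension count --- these amount to the same computation, and your version in fact makes the paper's terse assertion that $h^2\Delta=c^2+O(x^{2\nu+1})$ with $c\neq0$ more explicit.
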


The differential operator of the left-hand side of \eqref{eq:LT-symmetricpower} is known as the \emph{second symmetric power} of the operator $\delta_\nu^2-\frac{\Delta(x)}{4}$ \cite[\S 2.3]{SP}, \cite[\S 3.1]{OP}.
If the LDE is reducible, and $r(x)$ an analytic solution to \eqref{eq:LT-ricatti}, then
the equation \eqref{eq:LT-symmetricpower} can be factorized as
\begin{equation}\label{eq:LT-symmetricfactorized}
\big(\delta_\nu-r(x)\big)\delta_\nu\big(\delta_\nu+r(x)\big) h=0.
\end{equation}
Since in this case $r(0)\neq 0$, the formal power series solutions to \eqref{eq:LT-symmetricfactorized} are also solutions to $\delta_\nu\big(\delta_\nu+r(x)\big) h=0$.

\begin{remark}
	If $y_1(x)$, $y_2(x)$ are two linearly independent solutions to the LDE, $f(x)=\frac{y_2(x)}{y_1(x)}$,
	then $r=\frac{\delta_\nu^2 f}{\delta f}=p-2\frac{\delta_\nu y_1}{y_1}$ is a solution to \eqref{eq:LT-ricatti},
	and $h=\frac{f}{\delta_\nu f}$ is a solution to \eqref{eq:LT-symmetricpower}.
\end{remark}

\begin{theoremD}[Analytic normal forms when $\nu=1$]\label{thm-D}~
\begin{enumerate}[wide=0pt, leftmargin=\parindent]
	\item An irreducible LDE with a non-resonant irregular singularity at the origin of Poincar\'e rank $\nu=1$ 
	is analytically equivalent to an LDE of the form
	$$\delta_1^2y-\big(p^{(0)}+xp^{(1)}\big)\delta_1y-\big(q^{(0)}+xq^{(1)}+x^2q^{(2)}\big)y,$$
	with $1=\Delta^{(0)}=\big(p^{(0)}\big)^2+4q^{(0)}$ and $\mu=\frac{\Delta^{(1)}}{2}=p^{(0)}p^{(1)}+2q^{(1)}\in\C$. 
		
	Two such equations are analytically equivalent if and only if
	$$\mu=\pm\tilde{\mu},\quad\text{and}\quad \cos\pi\sqrt{\Delta^{(2)}+1}=\cos\pi\sqrt{\tilde\Delta^{(2)}+1},$$
	where $\Delta^{(2)}=(p^{(1)})^2+4q^{(2)}-2p^{(1)}$.
		
	\item A reducible LDE \eqref{eq:LT-reducible} with a non-resonant irregular singularity at the origin of Poincar\'e rank $\nu=1$ 
	with $\mu=\alpha_2^{(1)}-\alpha_1^{(1)}\notin\Z_{\leq 0}$ (hence with  diagonalizable monodromy)
	is analytically equivalent to
	either
		\begin{equation}\label{eq:LT-reduciblenf1}
		\Big(\delta_1-\lambda_2(x)\Big)\Big(\delta_1-\lambda_1(x)\Big)y=0, 
		\end{equation}
	with $\lambda_2(x)-\lambda_1(x)=1+\mu x$, or to \eqref{eq:LT-irregnormalform}.
		
	\item A reducible LDE \eqref{eq:LT-reducible} with a non-resonant irregular singularity at the origin of Poincar\'e rank $\nu=1$ with
	$\mu=\alpha_2^{(1)}-\alpha_1^{(1)}\in\Z_{\leq 0}$ is analytically equivalent to  
	either \eqref{eq:LT-reduciblenf1}, which in this case is analytically equivalent to \eqref{eq:LT-irregnormalform}, 
	if the monodromy is diagonalizable (scalar), 
	or to
		\begin{equation}\label{eq:LT-reduciblenf2}
		\Big(\delta_1-\lambda_2(x)+x^{2}\Big)\Big(\delta_1-\lambda_1(x)\Big)y=0,
		\end{equation}
		with $\lambda_2(x)-\lambda_1(x)=1+\mu x$, if the monodromy is non-diagonalizable.
\end{enumerate}
\end{theoremD}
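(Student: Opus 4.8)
The overall strategy is to reduce everything to the classification data already established: the formal invariant (here encoded by $p^{(0)}, p^{(1)}$, equivalently by $\{\lambda_1(x)\delta_1^{-1},\lambda_2(x)\delta_1^{-1}\}$ with $\lambda_2-\lambda_1 = 1+\mu x$), together with the collection of projective Stokes operators from Theorem~\ref{thm-B2} when the singularity is irreducible, and the structure of the convergent solutions from Proposition~\ref{prop:LT-reducible} when it is reducible. The case $\nu=1$ is special because there are exactly $2\nu=2$ sectors $\Omega_0,\Omega_1$ and hence exactly two projective Stokes operators $\sigma_{\beta_0},\sigma_{\beta_1}$, one of ``exploding'' type $f\mapsto f+s_1$ and one of ``flat'' type $f\mapsto \frac{f}{1+s_0 f}$. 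The first task is to pin down the canonical form: by the local normal form of the quadratic differential (Proposition~\ref{prop:LT-Delta}) one may assume $\Delta(x)\delta_1^{-2} = (1+\mu x)^2\delta_1^{-2}$, which forces $\Delta^{(0)}=1$, $\Delta^{(1)}=2\mu$, and $\Delta^{(k)}=\mu^2[k=2]$ for $k\ge 2$ \emph{in that gauge}; but after normalizing $\Delta$ one has spent the freedom in $\phi$ up to the rotation $x\mapsto -x$ (the group $\Z_{2\nu}=\Z_2$), so the residual parameter $\Delta^{(2)}$ in the statement is really the value carried by $J_0^{2\nu}\Delta\cdot\delta_\nu^{-2}=J_0^2\Delta\cdot\delta_1^{-2}$, which by the discussion before the Definition of formal invariants is \emph{not} a formal invariant (only $J_0^\nu=J_0^1$ is), hence must be read off after also using the point-transformation freedom in $t(x)$; I would show that after reduction to the stated three-parameter / one-extra-parameter family, $\Delta^{(2)}=(p^{(1)})^2+4q^{(2)}-2p^{(1)}$ is precisely the remaining modulus and the rotation $x\mapsto -x$ acts trivially on it (it enters only through $x^2$), while $\mu\mapsto-\mu$.

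\emph{Part (1), the irreducible case.} First reduce the general LDE of Poincar\'e rank $1$ to the displayed four-parameter normal form $\delta_1^2 y-(p^{(0)}+xp^{(1)})\delta_1 y-(q^{(0)}+xq^{(1)}+x^2q^{(2)})y=0$: use a point transformation to normalize the quadratic differential $\Delta\delta_1^{-2}$ to $(1+\mu x)^2\delta_1^{-2}$ as above, and then use the gauge freedom in $t(x)$ (equivalently, modify $p$ by $-2\frac{\delta_1 t}{t}$) to kill the jet of $p$ beyond order $1$; the condition $\Delta^{(0)}=1$ is the non-resonance normalization, $\mu=\Delta^{(1)}/2$. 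Then invoke Theorem~\ref{thm-A} to pass to the companion system, whose analytic classification is the classical one: two such systems are analytically equivalent iff they have the same formal data and equivalent Stokes matrices. Now the key computation: for $\nu=1$ with this normal form, the companion system is (after the trace-free reduction $\delta_1^2 y=\frac{\Delta}{4}y$, i.e. the $u$-system) essentially a \emph{confluent hypergeometric / Kummer type} equation $\delta_1^2 y = \frac{(1+\mu x)^2 + (\text{lower order from }\Delta^{(2)})}{4}y$ — more precisely a doubly-confluent Heun or Bessel-type equation — whose Stokes multipliers are classically known to be expressible through a single trigonometric/Gamma-function quantity. I expect that the product of the two projective Stokes multipliers $s_0 s_1$ (the only conjugation-invariant of the pair $(\sigma_{\beta_0},\sigma_{\beta_1})$ modulo simultaneous scaling, together with the $\iota$-swap coming from $G=\Z_2$) is computed by a connection-coefficient formula and equals a fixed affine function of $\cos\pi\sqrt{\Delta^{(2)}+1}$. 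Concretely: the trace of the monodromy around $0$ of the companion system equals $2\cos\pi\sqrt{\Delta^{(0)}}\cdot(\text{stuff})$ only formally, but the \emph{full} monodromy is the product of the formal monodromy and the two Stokes matrices, and its trace is a complete invariant of the $\PGL_2$-conjugacy class of the pair when combined with the residue $\mu$; this trace, for the normalized equation, evaluates to an expression in $\cos\pi\sqrt{\Delta^{(2)}+1}$ by reduction to a hypergeometric-type connection problem in the variable $s=x^{1/2}$ (using the non-degeneracy remark that in $s$ the singularity becomes non-resonant irregular of rank $2$, but now with a \emph{regular} companion structure after one more confluence — alternatively compute directly). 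Thus the pair $(\mu \bmod \pm,\ \cos\pi\sqrt{\Delta^{(2)}+1})$ is a complete analytic invariant; surjectivity onto such pairs follows from Theorem~\ref{thm-B2}(2) (realization of arbitrary Stokes data), checking that the realizing equation can be taken in the stated normal form.

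\emph{Parts (2) and (3), the reducible case.} Here Proposition~\ref{prop:LT-reducible}(1) applies: reducibility is equivalent to triviality of all the odd, or all the even, projective Stokes operators; since $\nu=1$ there is one of each, so reducibility means $s_1=0$ or $s_0=0$, and the remaining one is the sole invariant. When both vanish the equation is analytically equivalent to the formal normal form \eqref{eq:LT-irregnormalform}, which is exactly \eqref{eq:LT-reduciblenf1} up to the $\delta_1\log(\lambda_2-\lambda_1)$ term (absorbed by a further point transformation — this is where one checks \eqref{eq:LT-reduciblenf1} and \eqref{eq:LT-irregnormalform} are analytically equivalent when possible). When exactly one Stokes operator is nontrivial, its multiplier $s\neq 0$ can be normalized to any convenient value (e.g. $s=1$) by the simultaneous scaling $c$; the split between the diagonalizable and non-diagonalizable monodromy cases is governed by whether $\mu=\alpha_2^{(1)}-\alpha_1^{(1)}\in\Z_{\le 0}$, exactly as in the regular case of Theorem~\ref{thm-B1}(2)(c) versus (a): when $\mu\notin\Z_{\le 0}$ the one nontrivial Stokes operator can be conjugated away against the formal monodromy (because the resonance that produces a logarithm is absent), yielding \eqref{eq:LT-reduciblenf1}$\simeq$\eqref{eq:LT-irregnormalform}; when $\mu\in\Z_{\le 0}$ and the monodromy is non-diagonalizable the obstruction is genuine and the normal form is \eqref{eq:LT-reduciblenf2}, whose extra $x^2$ term plays the role of the $kx^k$ term in \eqref{eq:LT-B1d} — I would verify directly that \eqref{eq:LT-reduciblenf2} has the asserted convergent solution $e^{\int\lambda_1\delta_1^{-1}}t(x)$ but no second one, so its single Stokes multiplier is nonzero, and that any reducible LDE with $\mu\in\Z_{\le 0}$ and non-diagonalizable monodromy is analytically equivalent to it by matching formal data and the (scalar) value of that multiplier.

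\textbf{Main obstacle.} The crux is the explicit evaluation, in Part~(1), of the analytic (Stokes) invariants of the normalized rank-$1$ equation in closed form as $\cos\pi\sqrt{\Delta^{(2)}+1}$. This is a connection-problem computation for a confluent-Heun/Bessel-type equation; the cleanest route is probably to pass via $s=x^{1/2}$ to a rank-$2$ situation where the relevant equation is a known special function (Bessel or confluent hypergeometric) with tabulated Stokes matrices, but one must carefully track how the $\Z_2$-rotation symmetry $G$ and the $\iota$-swap act so as to land on the stated invariant $(\mu\bmod\pm,\ \cos\pi\sqrt{\Delta^{(2)}+1})$ rather than something finer or coarser. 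Everything else — the reduction to normal form, the reducible cases, and realization — is a routine application of Theorems~\ref{thm-A}, \ref{thm-B1}, \ref{thm-B2} and Proposition~\ref{prop:LT-reducible}.
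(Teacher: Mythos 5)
Your overall framing is the right one (pass to the companion system, use the fact that for $\nu=1$ the complete analytic invariant in the irreducible case is the formal data together with the single quantity $s_0s_\pi$, and in the reducible case a single multiplier normalizable by the scaling $c$), but two of your concrete steps fail. First, in Part~(1) your reduction to the polynomial normal form is incoherent: the transformation rule \eqref{eq:LT-transformationDelta} shows that $\Delta$ does not depend on $t(x)$ at all, so if you really normalized $\Delta(x)\delta_1^{-2}$ to $(1+\mu x)^2\delta_1^{-2}$ \emph{exactly} by a choice of $\phi$ and then killed the tail of $p$ with $t$, the coefficient $q$ would be forced and $\Delta^{(2)}$ would equal $\mu^2$ — leaving no room for the modulus the theorem asserts. (Normalizing $\Delta$ exactly is moreover not achievable analytically in general: it amounts to an analytic equivalence of the trace-free equation to a fixed model, which is obstructed by the Stokes data; only the $2\nu$-jet of $\Delta$ transforms as a quadratic differential.) The existence of the normal form has to be obtained the other way round, by \emph{realization}: one computes the complete invariant of the polynomial family as a function of $\Delta^{(2)}$ and checks it is surjective. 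For the evaluation itself — which you correctly flag as the crux but leave open — the paper's argument is much simpler than the confluent-Heun/Bessel connection problem you propose: the normal form is a Hamburger equation, so its companion system lives on $\CP^1$ with only a regular singular point at $\infty$ besides $0$; hence the monodromy at $0$ is read off from the residue eigenvalues at $\infty$, namely $-\tfrac{p^{(1)}-1}{2}\pm\tfrac12\sqrt{\Delta^{(2)}+1}$, giving $(\det M)^{-\frac12}\tr M=2\cos\pi\sqrt{\Delta^{(2)}+1}$ directly, while the local Stokes-theoretic expression gives $(\det M)^{-\frac12}\tr M=2\cos\pi\mu+e^{\pi i\mu}s_0s_\pi$; equating the two yields the classification and the solvability in $\Delta^{(2)}$.

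Second, in Parts~(2)--(3) you have the key equivalence exactly backwards. You claim that for $\mu\notin\Z_{\leq0}$ the nontrivial Stokes operator "can be conjugated away", so that \eqref{eq:LT-reduciblenf1} is equivalent to \eqref{eq:LT-irregnormalform}; in fact a nontrivial unipotent Stokes operator can never be conjugated away by the allowed equivalences, and the actual content of these parts is the explicit computation of the Stokes multiplier of the two candidate normal forms. Writing $y_2=y_1\int_{0+}^{x}e^{-1/t}t^{\mu-2}R(t)\,dt$ and computing the jump across $\arg x=\pi$ gives $s_\pi=(e^{2\pi i\mu}-1)\int_0^{+\infty}e^{-s}s^{-\mu}R(1/s)\,ds$; for \eqref{eq:LT-reduciblenf1} ($R=1$) this is $e^{\pi i\mu}\,2\pi i/\Gamma(\mu)$, which vanishes \emph{precisely} when $\mu\in\Z_{\leq0}$, so \eqref{eq:LT-reduciblenf1} represents the nontrivial Stokes class when $\mu\notin\Z_{\leq0}$ and degenerates to the formal normal form when $\mu\in\Z_{\leq0}$ — whence the need for the second model \eqref{eq:LT-reduciblenf2} ($R=e^{-x}$), whose multiplier $2\pi ie^{\pi i\mu}\sum_{j\geq0}1/\Gamma(\mu+j)$ is nonzero for $\mu\in\Z$. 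Without these $\Gamma$-function evaluations (or some substitute for them) Parts~(2) and (3) are not proved, and with your sign of the dichotomy reversed the argument as proposed would assign the wrong normal forms.
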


\subsubsection{Non-degenerate resonant irregular singularities}

\begin{proposition}\label{prop:LT-formalres}
	Two non-degenerate resonant irregular LDEs \eqref{eq:LT-irregODE} are formally equivalent if and only if their pairs of formal invariants 
	$J_0^{\nu}p(x)\cdot\delta_\nu^{-1}$, $J_0^{\nu}\Delta(x)\cdot\delta_\nu^{-2}$ are from the same equivalence class.
	The formal transformation 
	\begin{equation*}
	\tilde x=\hat\phi(x)=\sum_{j=1}^{+\infty}\phi^{(j)}x^j,\quad \tilde y=\hat t(x)\cdot y=\sum_{j=0}^{+\infty}t^{(j)}x^j\cdot y,\qquad 
	\phi^{(1)},t^{(0)}\neq0,
	\end{equation*}
	between the two LDEs is then Borel $(\nu-\tfrac12)$-summable, with singular directions $\arg(x)=\beta$ among those where 
	$\Im\big(e^{(1-2\nu)\beta i}\frac{d\Delta}{dx}(0)\big)=0$.
	
	Assuming that their formal invariants are equal, then a unique  formal transformation $\hat{\phi}$, $\hat t$ exists with
	$\hat\phi(x)=x+O(x^{\nu+1})$ and $\hat t(0)=1$. 
\end{proposition}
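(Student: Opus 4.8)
The plan is to reduce the whole statement to the non-resonant irregular case of Proposition~\ref{prop:LT-formalirreg} via the ramified substitution $s=x^{\frac12}$, as announced in the definition of non-degeneracy. Since the singularity is non-degenerate resonant we have $p(0)=0$ and $\Delta(0)=0$, hence also $q(0)=0$ (from $\Delta(0)=p(0)^2+4q(0)-2(\delta_\nu p)(0)=4q(0)$), together with $\tfrac{d\Delta}{dx}(0)\neq 0$. Putting $x=s^2$, i.e. $\delta_\nu=\tfrac12\delta_{2\nu}$ in the variable $s$, turns the LDE \eqref{eq:LT-irregODE} into an LDE in $s$ with analytic coefficients which, because $p(0)=q(0)=0$, can already be written with $\delta_{2\nu-1}$, i.e. has Poincar\'e rank $2\nu-1$; its corresponding $\Delta_s$ satisfies $\Delta_s(0)\neq 0$ precisely because $\tfrac{d\Delta}{dx}(0)\neq 0$, so this $s$-LDE is non-resonant, its $\mu_s$ vanishes (indeed $\lambda_2^{(s)}-\lambda_1^{(s)}$ has even $(2\nu-1)$-jet, the constant after normalizing $\Delta(x)\delta_\nu^{-2}=x\,\delta_\nu^{-2}$), while $\lambda_1^{(s)}+\lambda_2^{(s)}$ is an odd polynomial, so the $(2\nu-1)$-jets of $\{\lambda_1^{(s)}\delta_{2\nu-1}^{-1},\lambda_2^{(s)}\delta_{2\nu-1}^{-1}\}$ determine and are determined by the formal invariant $J_0^{\nu}p(x)\cdot\delta_\nu^{-1}$, $J_0^{\nu}\Delta(x)\cdot\delta_\nu^{-2}$ of the original LDE. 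Under $x=s^2$, a formal point transformation $\tilde x=\hat\phi(x)$, $\tilde y=\hat t(x)\,y$ of the $x$-LDE corresponds bijectively to one of the $s$-LDE commuting with the deck involution $s\mapsto -s$ — equivalently with $\hat\phi_s$ odd and $\hat t_s$ even — since these are exactly the transformations that descend to formal power series in $x$.

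The ``only if'' direction of the equivalence is then immediate from the transformation rules \eqref{eq:LT-transformationp}, \eqref{eq:LT-transformationDelta}, which give $p(x)=\psi(x)\,\tilde p(\phi(x))+O(x^{\nu+1})$ and $\Delta(x)=\psi(x)^2\,\tilde\Delta(\phi(x))+O(x^{2\nu+1})$, so that the jets of forms $J_0^{\nu}p\cdot\delta_\nu^{-1}$, $J_0^{2\nu}\Delta\cdot\delta_\nu^{-2}$ — and hence, in the non-degenerate case, already $J_0^{\nu}p\cdot\delta_\nu^{-1}$, $J_0^{\nu}\Delta\cdot\delta_\nu^{-2}$ — are preserved. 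For the ``if'' direction, equal formal invariants of two $x$-LDEs give equal formal invariants of the associated $s$-LDEs; bringing each of the latter to the common formal normal form of Proposition~\ref{prop:LT-formalirreg} by its canonical normalizing transformation — which I claim is deck-invariant — yields a deck-invariant formal point transformation between the two $s$-LDEs and therefore, by descent, a formal point transformation between the $x$-LDEs.

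The uniqueness statement, with the restriction $\hat\phi(x)=x+O(x^{\nu+1})$, is where the resonant case genuinely departs from Proposition~\ref{prop:LT-formalirreg}, and I would establish it in the $s$-picture. The canonical normalizing transformation $g_0$ of a non-resonant irregular $s$-LDE — the one with $\hat\phi_s(s)=s+O(s^{2\nu+1})$, unique up to $y\mapsto cy$ — commutes with $s\mapsto -s$: the $s$-LDE and its formal normal form are both invariant under $s\mapsto -s$, which therefore maps $g_0$ to a normalizing transformation of the same type, hence, by the uniqueness in Proposition~\ref{prop:LT-formalirreg}, to $g_0$ again. Consequently, for two formally equivalent $x$-LDEs the composition $\tilde g_0^{-1}\circ g_0$ of the corresponding $s$-normalizers is deck-invariant, descends, and has $\hat\phi(x)=x+O(x^{\nu+1})$. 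Any formal point transformation between the two $x$-LDEs corresponds in $s$ to composing $g_0$, $\tilde g_0$ with automorphisms of the normal form, i.e. with the flow of $\tfrac{1}{\lambda_2^{(s)}-\lambda_1^{(s)}}\delta_{2\nu-1}$; this flow changes the coefficient of $s^{2\nu}$ in $\hat\phi_s$, equivalently of $x^{\nu+1}$ in $\hat\phi$, so imposing $\hat\phi(x)=x+O(x^{\nu+1})$ forces the two flow times to coincide and hence cancel, leaving only the scalar freedom $y\mapsto cy$, which $\hat t(0)=1$ fixes. Equivalently, there is no nontrivial formal symmetry in $x$ because the only candidate, the flow of $\tfrac{1}{\sqrt{\Delta(x)}}\delta_\nu\sim x^{\nu+\frac12}\partial_x$, is not a formal power series vector field in $x$.

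Finally, the Borel $(\nu-\tfrac12)$-summability of $\hat\phi$, $\hat t$ and the location of the singular directions follow from the Borel $(2\nu-1)$-summability of the corresponding $s$-transformation (Proposition~\ref{prop:LT-formalirreg}) through $x=s^2$: $(2\nu-1)$-summability in $s$ gives $\tfrac{2\nu-1}{2}=\nu-\tfrac12$-summability in $x$ — the series $\hat\phi$, $\hat t$ being $(\nu-\tfrac12)$-Gevrey in $x$ because $\hat\phi_s$, $\hat t_s$ are $(2\nu-1)$-Gevrey and involve only even powers of $s$ — while a singular direction $\arg(s)=\beta_s$, characterised by $\Im\big(e^{-(2\nu-1)\beta_s i}(\lambda_2^{(s)}-\lambda_1^{(s)})\big)=0$, corresponds to $\arg(x)=\beta=2\beta_s$ with $\Im\big(e^{(1-2\nu)\beta i}\tfrac{d\Delta}{dx}(0)\big)=0$. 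I expect the main obstacle to be making the dictionary ``$x$-point transformation $\leftrightarrow$ deck-invariant $s$-point transformation'' and the accompanying parity argument fully precise at the formal level, since that is exactly where the sharp uniqueness (a single transformation, in contrast to the one-parameter family of the non-resonant case) is produced; a secondary, more routine point is the transfer of summability and of the singular directions across the ramified substitution.
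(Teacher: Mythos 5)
Your reduction via $s=x^{1/2}$ to a non-resonant singularity of Poincar\'e rank $2\nu-1$ is exactly the paper's starting point (the paragraph preceding the proof rewrites the LDE with $\delta_{\nu-\frac12}$ and invokes the non-resonant theory in the variable $x^{\frac12}$, which also gives the summability and the singular directions as you describe). Where you genuinely diverge is in how the descent to integer powers of $x$ is obtained. The paper does \emph{not} argue by deck-equivariance: it constructs the $x$-transformation directly, as an infinite composition $\hat\phi=\cdots\circ\phi_{\nu+1}\circ\phi_{\nu}$ with $\phi_k(x)=x+a_kx^{k+1}$ normalizing $\Delta$ to $x$ order by order via \eqref{eq:LT-transformationDelta}; the linearized coefficient at step $k$ is $2k-2\nu+1$, which (being odd) never vanishes, so each $a_k$ is uniquely determined — this simultaneously yields existence \emph{and} the sharp uniqueness, and the $x$-transformation is then identified with the unique $x^{\frac12}$-transformation a posteriori. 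Your route instead deduces oddness of $\hat\phi_s$ and evenness of $\hat t_s$ from the fact that the deck involution conjugates the canonical $s$-normalizer into another normalizer with flow parameter $-t$, so uniqueness in Proposition~\ref{prop:LT-formalirreg} forces $t=0$ and equivariance. Both arguments are sound; the paper's is more computational but makes the non-vanishing of the resonance denominators explicit, while yours isolates the conceptual reason (the would-be symmetry $\frac{1}{\sqrt{\Delta}}\delta_\nu\sim x^{\nu+\frac12}\partial_x$ is not a formal vector field in $x$) and requires you to verify carefully that the $s$-LDE, its normal form, and the normalization conditions are all preserved by $s\mapsto-s$ (including the swap $\lambda_1^{(s)}\leftrightarrow-\lambda_2^{(s)}$, which acts on both source and target and so is harmless).

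One internal slip: the flow of $\frac{1}{\lambda_2^{(s)}-\lambda_1^{(s)}}\delta_{2\nu-1}$ perturbs $\hat\phi_s$ by $ts^{2\nu}$, which corresponds to a term $\sim tx^{\nu+\frac12}$ in $\tilde x=\hat\phi_s(s)^2$, not to $x^{\nu+1}$ as you write mid-argument. This is harmless — indeed it makes your case stronger, since for $t\neq0$ the flow fails to descend to \emph{any} formal power series in $x$, which is precisely the point of your own closing remark — but the two sentences contradict each other and should be reconciled. You should also say a word about why $\hat\phi'(0)=1$ (ruling out the residual rotations $x\mapsto\zeta x$, $\zeta^{2\nu-1}=1$, preserving $x\,\delta_\nu^{-2}$), which is what the normalization $\hat\phi(x)=x+O(x^{\nu+1})$ in the statement is for.
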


Suppose $J_0^\nu\Delta(x)=x$ and $J_0^\nu p(x)=P(x)$, $P(0)=0$.  
Consider the formal normal form LDE \eqref{eq:LT-irregnormalform} with 
$\lambda_1(x)+\lambda_2(x)=P(x)-\tfrac12 x^\nu$, $\lambda_2(x)-\lambda_1(x)= x^{\frac12}$,
that is the LDE 
\begin{equation}\label{eq:LT-nf}
\delta_\nu^2\tilde y-\tilde p(x)\delta_\nu\tilde y-\tilde q(x)\tilde y=0,
\end{equation}
with
\begin{align*}
\tilde p(x)&=P(x), \\	
\tilde q(x)&=\tfrac14\left[x-P(x)^2+x^\nu P(x)-(\nu+\tfrac14)x^{2\nu}+2\delta_\nu P(x)\right],\\ 
\tilde\Delta(x)&=x+x^\nu P(x)-(\nu+\tfrac14)x^{2\nu},
\end{align*} 
whose basis of solutions is 
$\tilde y_j(x)=e^{\int\lambda_j(x)\delta_\nu^{-1}}$, $j=1,2.$

By the Proposition~\ref{prop:LT-formalres}, the LDE is equivalent to \eqref{eq:LT-nf} by a formal transformation $\hat\phi(x)$, $\hat t(x)$, 
Borel $(\nu-\tfrac12)$-summable except in the singular directions
$$\beta_l=\tfrac{2l}{2\nu-1}\pi,\qquad l\in\Z.$$ 
Hence the LDE has a canonical basis of solutions
$$y_{j,\Omega_l}=t_{\Omega_l}(x)\cdot \tilde y_j\left(\phi_{\Omega_l}(x)\right),\qquad j=1,2,$$
where
$\phi_{\Omega_l}(x)$, $t_{\Omega_l}(x)$ are the Borel sums, bounded and analytic on the sectors
$$\Omega_l=\left\{|\arg x-\tfrac{2l+1}{2\nu-1}\pi|<\tfrac{2\pi}{2\nu-1}-\eta,\ |x|<\rho_\eta \right\}, \qquad l\in\Z_{2\nu-1},$$
where $0<\eta<\frac{\pi}{2\nu-1}$ is arbitrarily small, and $\rho_\eta>0$ depends on $\eta$.

\medskip
The \emph{projective Stokes operators} $\sigma_{\beta_l}\in\PGL_2(\C)$ are now defined as before
\begin{align*}
	f_{\Omega_{l-1}}(x)&=\sigma_{\beta_l}\left(f_{\Omega_l}(x)\right),\quad\text{for}\ \arg(x)=\tfrac{l}{\nu}\pi,\qquad l=1,\ldots,2\nu-1,\\
	f_{\Omega_{2\nu}}(e^{2\pi i}x)^{-1}&=\sigma_{\beta_0}\left(f_{\Omega_0}(x)\right),\quad\text{for}\ \arg(x)=0.
\end{align*}
where $f_{\Omega_l}(x):=\frac{y_{2,\Omega_l}(x)}{y_{1,\Omega_l}(x)}$.
Their definition is extended to all $l\in\Z$ by 
$$\sigma_{\beta_{l+2\nu}}=\iota\circ\sigma_{\beta_{l}}\circ\iota,\quad\text{where }\ \iota:f\mapsto\tfrac{1}{f}.$$
They are of the form	
$$\begin{cases} \sigma_{\beta_l}:f\mapsto f+s_l &\hskip-6pt \text{when $e^{\int (\lambda_2-\lambda_1)\delta_\nu^{-1}}$ is ``exploding'' as $x\to0$},\\
\sigma_{\beta_l}:f\mapsto \frac{f}{1+s_l f}  &\hskip-6pt \text{when $e^{\int (\lambda_2-\lambda_1)\delta_\nu^{-1}}$ is ``flat'' as $x\to 0$}.\end{cases}$$

\begin{definition}[Symmetry group of the formal invariants]
	Suppose $J_0^{\nu}\Delta(x)=x$, $J_0^{\nu}p(x)=P(x)$.
	Let us define $G\subseteq\Z_{2\nu-1}$ as the subgroup of the cyclic group consisting of the elements $g\in\Z_{2\nu-1}$ such that
	the associated rotation $x\mapsto e^{\frac{g2\pi i}{2\nu-1}}x$ preserves the differential form $P(x)\delta_\nu^{-1}$.
	Since $P(x)\delta_\nu^{-1}$ is uniquely defined up to rotations from $\Z_{2\nu-1}$, which commute with $G$, the group $G$ is well-defined.
\end{definition}

 \begin{theoremB3}[Analytic classification of non-degenerate resonant irregular singularities, $\nu>0$]\label{thm-B3}~
	\begin{enumerate}[wide=0pt, leftmargin=\parindent]
		\item Two formally equivalent LDEs \eqref{eq:LT-irregODE} with a non-degenerate resonant irregular singularity at the origin and the same pair of formal invariants in canonical form $x\delta_\nu^{-2}$, 
		$P(x)\delta_\nu^{-1}$
		are analytically equivalent if and only if  their respective collections of projective Stokes operators $(\sigma_{\beta_l})_{l\in\Z}$ and $(\sigma_{\beta_l}')_{l\in\Z}$ are equivalent in the following sense:
		there exist $g\in G$ such that
		$$\sigma_{\beta_l}'=\iota^g\circ\sigma_{\beta_{l+g}}\circ\iota^g\quad\text{for all}\ l\in\Z,$$
		where $G\subseteq\Z_{2\nu-1}$ is the symmetry group of the formal invariant and $\iota:f\mapsto \frac{1}{f}$.
		
		\item For every pair of formal invariants $x\delta_\nu^{-2}$, $P(x)\delta_\nu^{-1}$ in canonical form and every collection of projective Stokes operators, there exists a LDE with a non-degenerate resonant irregular singularity of given formal class realizing them as its analytic invariants.
	\end{enumerate}
\end{theoremB3}

\begin{theoremD}[Analytic normal forms when $\nu=1$]\label{thm-D2}~
LDE with a non-degenerate resonant irregular singularity at the origin of Poincar\'e rank $\nu=1$ is analytically equivalent to an LDE of the form
$$\delta_1^2y-p^{(1)}x\delta_1y-\big(\tfrac14x+x^2q^{(2)}\big)y,$$
with $\Delta(x)=x+x^2\Delta^{(2)}$, with $\Delta^{(2)}=(p^{(1)})^2+4q^{(2)}-2p^{(1)}$. 
Two such equations are analytically equivalent if and only if
$$p^{(1)}=\tilde p^{(1)},\quad\text{and}\quad \cos\pi\sqrt{1+\Delta^{(2)}}=\cos\pi\sqrt{1+\tilde\Delta^{(2)}}.$$
\end{theoremD}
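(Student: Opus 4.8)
The plan is to read this off from Proposition~\ref{prop:LT-formalres} and Theorem~\ref{thm-B3}, after making the single projective Stokes operator explicit; the explicit computation proceeds by reducing the displayed normal form to the modified Bessel equation.

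First I verify that the displayed LDE is a non-degenerate resonant irregular LDE of Poincar\'e rank $\nu=1$: with $p(x)=p^{(1)}x$ and $q(x)=\tfrac14x+x^2q^{(2)}$ one has $\Delta(x)=x+x^2\Delta^{(2)}$, so $\Delta(0)=p(0)=0$ and $\tfrac{d\Delta}{dx}(0)=1\neq0$, and the pole orders give exactly $\nu=1$. Its formal invariant is read off from the normalised pair $J_0^1\Delta(x)\cdot\delta_1^{-2}=x\,\delta_1^{-2}$ and $J_0^1p(x)\cdot\delta_1^{-1}=p^{(1)}\tfrac{dx}{x}$; since the residue of a meromorphic $1$-form is a biholomorphic invariant and the symmetry group acting on $P(x)\delta_\nu^{-1}$ here is $G\subseteq\Z_{2\nu-1}=\Z_1$, i.e. trivial, the formal class of the normal form is exactly the number $p^{(1)}$. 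Because $2\nu-1=1$ there is a single projective Stokes operator $\sigma_{\beta_0}$ (all others being obtained from it by $\sigma_{\beta_{l+2}}=\iota\circ\sigma_{\beta_l}\circ\iota$), carrying one complex parameter $s_0$. By Theorem~\ref{thm-B3}(1) with $G$ trivial, two such normal forms are analytically equivalent if and only if $p^{(1)}=\tilde p^{(1)}$ and $s_0=\tilde s_0$. Hence the whole theorem amounts to identifying $s_0$ as a function of $\Delta^{(2)}$.

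To compute $s_0$ for the displayed LDE I move the singularity to infinity: writing the equation in $z=\tfrac1x$ gives $z^2y_{zz}+p^{(1)}zy_z-\big(\tfrac z4+q^{(2)}\big)y=0$, and then the substitutions $z=t^2$ (i.e. $t=x^{-1/2}$) and $y=t^{1-p^{(1)}}w$ turn it into the modified Bessel equation
\[
t^2w''+tw'-\big(t^2+\beta^2\big)w=0,\qquad \beta^2=(1-p^{(1)})^2+4q^{(2)}=1+\Delta^{(2)}.
\]
The Stokes structure of the modified Bessel equation at $t=\infty$ is classical: comparing the exponentially exploding and flat canonical solutions, the single Stokes multiplier equals, up to a universal nonzero constant, $2\cos\pi\beta$ (this can be seen, for instance, from $\operatorname{tr}M_\infty=\operatorname{tr}M_0^{-1}=2\cos 2\pi\beta$ together with the standard form of the two Stokes matrices), and the factor $t^{1-p^{(1)}}$ affects only the formal monodromy, not the projectivised Stokes data. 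The substitution $t=x^{-1/2}$ is the reciprocal of the passage to the ramified coordinate $s=x^{1/2}$ in which, by Proposition~\ref{prop:LT-formalres}, the singularity becomes non-resonant irregular of rank $1$; the $\Z_2$ deck symmetry of $s\mapsto-s$ folds the two Stokes directions of the $s$-equation into the single operator $\sigma_{\beta_0}$ of the $x$-equation. Keeping track of this identification yields $s_0=c_0\cos\pi\sqrt{1+\Delta^{(2)}}$ for a fixed nonzero constant $c_0$ independent of the equation, which is well defined since $\cos\pi\sqrt{1+\,\cdot}$ is an entire even function.

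The conclusion is then formal. The ``only if'' direction is immediate: analytic equivalence forces $p^{(1)}=\tilde p^{(1)}$ and $s_0=\tilde s_0$, hence $\cos\pi\sqrt{1+\Delta^{(2)}}=\cos\pi\sqrt{1+\tilde\Delta^{(2)}}$. For the remaining assertions I use surjectivity: for fixed $p^{(1)}$, the map $q^{(2)}\mapsto\Delta^{(2)}\mapsto\cos\pi\sqrt{1+\Delta^{(2)}}$ sends $\C$ onto $\C$, so the normal forms realise every value of $s_0$; combined with Theorem~\ref{thm-B3}(2) this shows that an arbitrary non-degenerate resonant irregular LDE of rank $1$ shares its formal invariant and its $\sigma_{\beta_0}$ with some normal form, hence by Theorem~\ref{thm-B3}(1) is analytically equivalent to it, and two normal forms with equal $p^{(1)}$ and equal $\cos\pi\sqrt{1+\Delta^{(2)}}$ have equal $s_0$ and so are analytically equivalent. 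The main obstacle is the computation in the third paragraph: one must push the coordinate changes and the $\Z_2$-folding through carefully enough to be sure that the projective Stokes parameter is exactly a multiple of $\cos\pi\sqrt{1+\Delta^{(2)}}$ (so that no finer invariant is lost) and to control the conventions relating the ``exploding/flat'' normalisation of $\sigma_{\beta_0}$ to the Hankel-function connection formulae; the reduction to Bessel is what makes this a bookkeeping exercise rather than an open-ended one.
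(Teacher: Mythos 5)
Your proposal is correct and follows essentially the same route as the paper: the paper's proof of this theorem is a one-line reference to the argument for Theorem~\ref{thm-D}, namely reduce to the classification of Theorem~\ref{thm-B3} (formal invariant $p^{(1)}$ plus the single Stokes multiplier $s_0$) and then pin down $s_0$ by computing the trace of the monodromy of the polynomial normal form from the residue eigenvalues at the Fuchsian point $x=\infty$, which is exactly the identity $\operatorname{tr}M_\infty=2\cos2\pi\beta$ you invoke parenthetically. Your reduction to the modified Bessel equation (with $\beta^2=(1-p^{(1)})^2+4q^{(2)}=1+\Delta^{(2)}$, which I checked) is just a concrete repackaging of that same computation, and your surjectivity argument for existence matches the paper's implicit use of solvability of the equation for $\Delta^{(2)}$.
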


\subsection{Meromorphic classification}\label{sec:1.4} 
If the transformation \eqref{eq:LT-ODEpoint} is meromorphic with $t(x)=x^m u(x)$, $u(0)\neq 0$, then from \eqref{eq:LT-transformationp}--\eqref{eq:LT-transformationDelta} one can see that 
$$p(x)=\psi(x)\cdot\tilde p(\phi(x))-2mx^\nu+O(x^{2\nu+1}),\quad 
\Delta(x)=\psi(x)^2\cdot\tilde\Delta(\phi(x)) +O(x^{\nu+1}).$$
In the regular or non-resonant irregular case this means that $\lambda_{1,2}(x)=\psi(x)\tilde\lambda_{1,2}(\phi(x))-mx^\nu+O(x^{\nu+1})$,
i.e. that equivalence class of the pair of formal invariants $\{\lambda_1(x)\delta_\nu^{-1},\lambda_2(x)\delta_\nu^{-1}\}$ is shifted by $-mx^\nu\delta_\nu^{-1}$.

\begin{theoremE}[Non-resonant irregular case, $\nu>0$]\label{thm-E}
	Two LDEs \eqref{eq:LT-irregODE} with either regular or non-degenerate irregular singularities are meromorphically equivalent if and only if 
	they have the same equivalence class of formal invariants 
	$J_0^\nu\Delta(x)\cdot\delta_\nu^{-2},\ J_0^\nu p(x)\cdot\delta_\nu^{-1}$ 
	up to a shift $J_0^\nu p(x)\cdot\delta_\nu^{-1}\mapsto J_0^\nu p(x)\cdot\delta_\nu^{-1}+mx^\nu\delta_\nu^{-1}$, $m\in\Z$,
	and 
	\begin{enumerate}
		\item if regular: their monodromies are conjugated (i.e. they are both either diagonalizable or non-diagonalizable),
		\item if non-resonant irregular: their collections of Stokes operators are equivalent in the sense of Theorem~\ref{thm-B2},
		\item if non-degenerate resonant irregular: their collections of Stokes operators are equivalent in the sense of Theorem~\ref{thm-B3}.
	\end{enumerate}
\end{theoremE}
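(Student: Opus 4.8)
The plan is to derive Theorem~\ref{thm-E} from the analytic classification (Theorems~\ref{thm-B1}--\ref{thm-B3}) by isolating the role of the ``purely meromorphic'' transformations $M_m\colon \tilde x=x$, $\tilde y=x^m y$, with $m\in\Z$. The elementary input is a factorization: a meromorphic point transformation \eqref{eq:LT-ODEpoint} with $t(x)=x^m u(x)$, $u(0)\neq0$, is the composition of $M_m$ followed by the analytic point transformation $(x,z)\mapsto(\phi(x),u(x)z)$. It therefore suffices to (i) compute how $M_m$ acts on the full analytic invariant, and (ii) splice this onto the ``if'' and ``only if'' directions of Theorems~\ref{thm-B1}--\ref{thm-B3}. (Routing through the companion systems and Theorem~\ref{thm-A} would be tempting, but Theorem~\ref{thm-A} is only established for analytic and formal equivalence, so I prefer the direct route.)

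First I would work out the action of $M_m$ on a regular or non-degenerate irregular singularity. Substituting $\phi=\id$, $t=x^m$, $\psi=1$ into \eqref{eq:LT-transformationp}--\eqref{eq:LT-transformationDelta} gives $\tilde\Delta=\Delta$ unchanged and $\tilde p=p+2mx^\nu$; in terms of the formal invariants this is exactly the shift $\lambda_j(x)\delta_\nu^{-1}\mapsto\lambda_j(x)\delta_\nu^{-1}+mx^\nu\delta_\nu^{-1}$ recorded before the theorem, with $J_0^\nu\Delta(x)\delta_\nu^{-2}$ fixed. Three short checks then remain: that $M_m$ preserves the Poincaré rank and the (non-)degeneracy type, which is clear since it does not touch $\Delta$; that it sends a canonical form of the formal invariant to a canonical form and fixes the symmetry group $G$, because $\Delta$ is fixed and only the trace $\lambda_1+\lambda_2$ is shifted, by the amount $m\,\tfrac{dx}{x}$, which is invariant under every rotation $x\mapsto\zeta x$ and a fortiori under the finite groups $x\mapsto e^{\frac{l\pi i}{\nu}}x$ and $x\mapsto e^{\frac{2l\pi i}{2\nu-1}}x$ defining $G$; and that $M_m$ leaves the projective monodromy and each projective Stokes operator $\sigma_{\beta_l}$ literally unchanged, because $x^m$ is single-valued for $m\in\Z$, so the ratio $f=y_2/y_1$, the singular directions (which depend only on $\lambda_2^{(0)}-\lambda_1^{(0)}$), and the sectors $\Omega_l$ are all unaffected.

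Granting this, the two implications become formal. For ``only if'', factor a meromorphic equivalence as $M_m$ followed by an analytic point transformation: the given LDE and its $M_m$-image $E'$ have formal invariants differing by the admissible shift and the same projective monodromy / Stokes operators, while $E'$ and the target LDE are analytically equivalent, so Theorems~\ref{thm-B1}--\ref{thm-B3} force their formal invariant classes to coincide and their monodromy / Stokes data to be equivalent in the stated sense; composing the two comparisons yields precisely the conclusion of Theorem~\ref{thm-E}. For ``if'', suppose two such LDEs have formal invariants agreeing up to a shift realized by some $M_m$ and have equivalent monodromy / Stokes data; apply $M_m$ to the first one, obtaining $E'$ with the formal invariant class of the second LDE and, by the previous step, the monodromy / Stokes data of the first, hence equivalent to those of the second. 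Theorems~\ref{thm-B1}--\ref{thm-B3} then produce an analytic equivalence from $E'$ to the second LDE, and composing it with $M_m$ gives the required meromorphic equivalence.

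I expect the only real work to lie in the bookkeeping of the middle step: one must be sure that $M_m$ transports the \emph{entire} invariant --- the formal invariant together with its canonical normalization and symmetry group, the projective monodromy, and the ordered collection $(\sigma_{\beta_l})_{l\in\Z}$ --- exactly along the one-parameter family of admissible shifts and in no other way. Everything rests on the two clean facts that $M_m$ fixes $\Delta$, hence the quadratic differential and all data extracted from it (the normalizing coordinate, the singular directions, the sectors, the canonical solutions up to scalars), and that $x^m$ is single-valued, so that the monodromy/Stokes picture is genuinely untouched. With these in place, Theorem~\ref{thm-E} falls out as a corollary of the analytic classification.
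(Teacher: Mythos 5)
Your proposal is correct and follows essentially the same route as the paper: the paper's (two-sentence) proof likewise factors a meromorphic point transformation through $y\mapsto x^m y$, observes that this shifts $J_0^\nu p(x)\cdot\delta_\nu^{-1}$ while fixing $\Delta$, the projective monodromy and the Stokes operators (since the canonical sectoral bases are only rescaled by the single-valued factor $x^m$, leaving $f=y_2/y_1$ untouched), and then reduces to the analytic classification of Theorems~\ref{thm-B1}--\ref{thm-B3}. Your version merely spells out the bookkeeping (canonical form, symmetry group $G$, singular directions) that the paper leaves implicit.
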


Let us remark that, unlike for systems, conjugation of monodromies of regular singularities alone does not suffice to produce meromorphic equivalence.

\subsection{Lie symmetries}\label{sec:1.5} 
For non-resonant singularities of differential systems there is a canonical diagonal formal normal form
\begin{equation}\label{eq:diagonalsystem}
\delta_\nu u=\left(\begin{smallmatrix} \lambda_1(x) & 0 \\[3pt] 0 & \lambda_2(x)\end{smallmatrix}\right)u,
\end{equation}
which is integrable in terms of elementary functions with fundamental solution matrix  
$\left(\begin{smallmatrix} e^{\int\lambda_1(x)\delta_\nu^{-1}} & 0 \\[3pt] 0 & e^{\int\lambda_2(x)\delta_\nu^{-1}}\end{smallmatrix}\right)$.
Correspondingly, the analytic class of this normal form system is the one that has the largest possible Lie algebra of analytic infinitesimal symmetries (see \cite{BMW}) of all the systems within the formal class.
It turns out that the same holds also for non-resonant irregular LDEs \eqref{eq:LT-irregODE}: they are analytically equivalent to their formal normal form \eqref{eq:LT-irregnormalform} if and only their Lie algebra of linear analytic point symmetries is the largest possible (Theorem~\ref{thm-F} below).

Let us recall that an \emph{infinitesimal linear symmetry} of a LDE \eqref{eq:LT-ODE} is a vector field 
$$Y=g(x)\tfrac{\partial}{\partial x}+f(x)y\tfrac{\partial}{\partial y},$$
whose second jet prolongation $\pr^{(2)} Y$ leaves the surface
$y_{xx}+a_1(x)y_x+a_0(x)y=0$ 
invariant.
This is equivalent \cite[p.350]{Dic} to ask that 
\begin{equation}\label{eq:LT-Lie}
 [X,\pr^{(1)}Y]=\alpha(x)X, \qquad\text{for some function } \alpha(x),
\end{equation}
where 
\begin{equation}\label{eq:LT-LieX}
X=\tfrac{\partial}{\partial x} + y_x\tfrac{\partial}{\partial y} + \big(a_1(x)y_x+a_0(x)y\big)\tfrac{\partial}{\partial y_x},
\end{equation}
and $\pr^{(1)}Y$ is the first jet prolongation of $Y$:
\begin{equation}\label{eq:LT-LieY}
\pr^{(1)}Y=g(x)\tfrac{\partial}{\partial x}+f(x)y\tfrac{\partial}{\partial y}+\big(\tfrac{df}{dx}(x)y+f(x)y_x-\tfrac{dg}{dx}(x)y_x\big)\tfrac{\partial}{\partial y_x}.
\end{equation}

\begin{theoremF}\label{thm-F}
The infinitesimal linear symmetries of a LDE \eqref{eq:LT-irregODE} are of the form
$$Y=h(x)\delta_\nu+\tfrac{1}{2}\big(c+\delta_\nu h(x)+p(x)h(x)\big)y\tfrac{\partial}{\partial y}, $$
where $c\in\C$, and $h(x)$ is a solution of \eqref{eq:LT-symmetricpower}.
The Lie algebra of analytic infinitesimal linear symmetries of
\begin{enumerate}[wide=0pt, leftmargin=\parindent]
	\item a regular singularity 
	\begin{enumerate}
	\item strongly non-resonant with $\lambda_1-\lambda_2\notin\Z\sminus\{0\}$ in the normal form \eqref{eq:LT-B1a} is generated by
	$$y\tfrac{\partial}{\partial y}, \quad  \delta_0, $$
	\item resonant with $\lambda_1-\lambda_2=k\in\Z_{>0}$ and trivial projective monodromy, in the normal form \eqref{eq:LT-B1a}, is generated by
	$$y\tfrac{\partial}{\partial y}, \quad  \delta_0, \qquad x^k\Big(\delta_0+\lambda_1y\tfrac{\partial}{\partial y}\Big), 
	$$
	\item resonant with $\lambda_1-\lambda_2=k\in\Z_{>0}$ and non-diagonalizable projective monodromy, in the normal form \eqref{eq:LT-B1c},
	$$y\tfrac{\partial}{\partial y}, \quad \tfrac{x^k}{1-x^k}\Big(\delta_0+\lambda_1 y\tfrac{\partial}{\partial y}\Big),$$
	\end{enumerate}
	\item a non-resonant irregular singularity of Poincar\'e rank $\nu>0$
	\begin{enumerate}
	\item in the normal form \eqref{eq:LT-irregnormalform} is generated by
	$$y\tfrac{\partial}{\partial y}, \quad \tfrac{1}{\lambda_2(x)-\lambda_1(x)}\Big(\delta_\nu+\tfrac{\lambda_1(x)+\lambda_2(x)}{2}y\tfrac{\partial}{\partial   y}\Big),$$
	\item not analytically equivalent to \eqref{eq:LT-irregnormalform} is generated only by
	$$y\tfrac{\partial}{\partial y}.$$
	\end{enumerate}
\item a non-degenerate resonant irregular singularity of Poincar\'e rank $\nu>0$ is generated only by
	$$y\tfrac{\partial}{\partial y}.$$
\end{enumerate}
\end{theoremF}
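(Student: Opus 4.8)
I would substitute the ansatz $Y=g(x)\tfrac{\partial}{\partial x}+f(x)y\tfrac{\partial}{\partial y}$ into the invariance condition \eqref{eq:LT-Lie}, with $X$ and $\pr^{(1)}Y$ as in \eqref{eq:LT-LieX}--\eqref{eq:LT-LieY}. Matching the $\tfrac{\partial}{\partial x}$-components forces $\alpha(x)=\tfrac{dg}{dx}$; the $\tfrac{\partial}{\partial y}$-components are then automatically equal; and the $\tfrac{\partial}{\partial y_x}$-component, split according to the powers of $y_x$ and of $y$, produces two scalar ODEs. The coefficient of $y_x$ integrates at once, expressing $f$ through $g$ up to one constant $c$; rewritten in the $\delta_\nu$-normalisation $g\tfrac{\partial}{\partial x}=h\delta_\nu$ (so $h=g/x^{\nu+1}$) this is exactly $f=\tfrac12\big(c+\delta_\nu h+p\,h\big)$. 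Feeding this back, the coefficient of $y$ becomes a third-order linear ODE for $h$. Since the vector-field coefficient $h$ is unchanged by the point transformations $y\mapsto t(x)y$ (which alter $p$ by $p\mapsto p-2\tfrac{\delta_\nu t}{t}$ and fix $\Delta$), one may compute this ODE on the trace-free representative $p\equiv 0$, $\Delta=4q$, where it is literally \eqref{eq:LT-symmetricpower}. This yields the general formula; moreover the solution space of \eqref{eq:LT-symmetricpower} is three-dimensional, spanned by $\tfrac{y_1^2}{W},\tfrac{y_1y_2}{W},\tfrac{y_2^2}{W}$ for any basis $y_1,y_2$ of solutions, $W=y_1\delta_\nu y_2-y_2\delta_\nu y_1$ with $\delta_\nu W=pW$, in accordance with the Remark preceding the theorem.

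\textbf{Reduction, and the normal-form cases.} Because $y\tfrac{\partial}{\partial y}$ (the case $h=0$) is always an analytic symmetry and is central in the symmetry algebra (all symmetries are fibrewise linear in $y$), the Lie algebra of analytic infinitesimal linear symmetries is $\C\,y\tfrac{\partial}{\partial y}$ extended by one generator for each element of the space $V$ of solutions of \eqref{eq:LT-symmetricpower} that are analytic at $x=0$; everything thus reduces to computing $\dim V$ and a basis. Writing a solution of \eqref{eq:LT-symmetricpower} as $h=\tfrac{a\,y_2^2+b\,y_1y_2+c\,y_1^2}{W}$, analyticity of $h$ is the requirement that the quadratic form $(a,b,c)$ be fixed, up to the Wronskian/determinant factor, by the projectivised monodromy (resp.\ monodromy-and-Stokes) action on $f=y_2/y_1$, and that the resulting $h$ have no pole and no essential singularity. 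For the explicit normal forms this is a direct computation with the known solution bases: in \eqref{eq:LT-B1a} with $y_j=x^{\lambda_j}$ one gets $W=(\lambda_2-\lambda_1)x^{\lambda_1+\lambda_2}$, so the three solutions are $x^{\pm(\lambda_2-\lambda_1)}$ and $1$, hence $\dim V=1$ ($h$ constant) when $\lambda_1-\lambda_2\notin\Z$ and $V=\langle 1,x^k\rangle$ when $\lambda_1-\lambda_2=k\in\Z_{>0}$; in \eqref{eq:LT-B1c} the logarithm in $y_2$ destroys analyticity of all but $\tfrac{y_1^2}{W}\propto\tfrac{x^k}{1-x^k}$, so $\dim V=1$; and in \eqref{eq:LT-irregnormalform} with $y_j=e^{\int\lambda_j\delta_\nu^{-1}}$ one has $W=(\lambda_2-\lambda_1)y_1y_2$, the solutions $\propto e^{\pm\int(\lambda_2-\lambda_1)\delta_\nu^{-1}}$ carry essential singularities, and only $\tfrac{y_1y_2}{W}=\tfrac1{\lambda_2-\lambda_1}$ is analytic, so $V=\C\cdot\tfrac1{\lambda_2-\lambda_1}$. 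In each case, substituting a basis of $V$ into $f=\tfrac12(c+\delta_\nu h+ph)$ and adjusting $c$ produces precisely the generators listed in 1(a), 1(b), 1(c), 2(a); the algebra structure is then read off directly (e.g.\ $[\delta_0,\,x^k(\delta_0+\lambda_1 y\tfrac{\partial}{\partial y})]=k\,x^k(\delta_0+\lambda_1 y\tfrac{\partial}{\partial y})$).

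\textbf{Cases 2(b) and 3: showing $V=0$.} For 2(b) this is immediate from Proposition~\ref{prop:LT-reducible}(2): \eqref{eq:LT-symmetricpower} has a nonzero analytic solution if and only if the LDE is analytically equivalent to its formal normal form \eqref{eq:LT-irregnormalform}, which is excluded here. For 3 I would pass to the coordinate $s=x^{\frac12}$. Using $p(0)=0$ and $q(0)=\tfrac14\Delta(0)=0$, the substitution $x=s^2$ turns the LDE into one of the form \eqref{eq:LT-irregODE} in $s$ with analytic coefficients, of Poincar\'e rank $2\nu-1$, having a non-resonant irregular singularity whose $\lambda_2^s(s)-\lambda_1^s(s)$ does not vanish at $s=0$; moreover the $\delta_\nu$-Wronskians in the two coordinates satisfy $W^x=\tfrac s2 W^s$, so an analytic-in-$x$ solution $h$ of \eqref{eq:LT-symmetricpower} corresponds to the solution $\tfrac s2 h$ of the symmetric-power equation in $s$, which is an odd analytic function of $s$. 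By Proposition~\ref{prop:LT-reducible}(2) applied in the $s$-coordinate, a nonzero analytic-in-$s$ solution forces analytic equivalence to the $s$-normal-form and then spans the line $\C\cdot\tfrac1{\lambda_2^s-\lambda_1^s}$ --- a function nonzero at $s=0$, hence not odd. Therefore $\tfrac s2 h\equiv 0$, i.e.\ $V=0$, and the algebra is $\C\,y\tfrac{\partial}{\partial y}$.

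\textbf{Main obstacle.} I expect the technical core to be this last reduction: verifying carefully that a non-degenerate resonant irregular singularity really becomes a genuine non-resonant irregular LDE of Poincar\'e rank $2\nu-1$ in the coordinate $s=x^{\frac12}$ --- in particular that the transformed coefficients stay analytic (which uses $q(0)=0$) and that the rank cannot be lowered --- and then correctly tracking the factor-of-$s$ Wronskian twist so that ``analytic in $x$'' translates into the right parity/vanishing condition in $s$. The other places demanding care are the sign bookkeeping in the prolongation computation of the first part and, in case 1(c), the handling of the logarithmic solution.
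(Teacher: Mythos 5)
Your proposal is correct and follows the same overall architecture as the paper's proof (prolongation computation $\Rightarrow$ third-order equation \eqref{eq:LT-symmetricpower} for $h$ $\Rightarrow$ selection of the analytic solutions in each normal form, with Proposition~\ref{prop:LT-reducible}(2) handling the non-resonant irregular case), but it differs in execution in three worthwhile ways. First, instead of carrying out the direct elimination with general $p$ as the paper does, you derive \eqref{eq:LT-symmetricpower} by passing to a trace-free representative; this is legitimate, but note that the gauge $t=e^{\frac12\int p\,\delta_\nu^{-1}}$ is in general neither analytic nor meromorphic at $0$, so the reduction should be phrased as an identity of differential polynomials verified on a simply connected sector (or one simply does the short direct computation). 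Second, you work systematically with the basis $\tfrac{y_1^2}{W},\tfrac{y_1y_2}{W},\tfrac{y_2^2}{W}$ of the symmetric-power equation, where the paper instead solves the factorized third-order equation case by case; your route is cleaner and makes the log/essential-singularity obstructions in cases 1(c) and 2(a) transparent (only watch the degenerate subcase $\lambda_1=\lambda_2$ of 1(a), where the basis becomes $1,\log x,(\log x)^2$ rather than $1,x^{\pm(\lambda_2-\lambda_1)}$). Third, and most substantially, your treatment of case 3 via $s=x^{\frac12}$ and the parity of $\tfrac{s}{2}h(s^2)$ supplies an argument that the paper's written proof in fact omits (it stops after the case $\nu>0$, $\Delta(0)\neq0$); the argument is sound, the one ingredient you use implicitly being that a nontrivial analytic solution of the symmetric-power equation at a non-resonant irregular point is nonvanishing at the origin --- which does follow from the first integral $2h\delta^2h-(\delta h)^2-\Delta h^2=\mathrm{const}$ exploited in the proof of Proposition~\ref{prop:LT-reducible}(2), or from the fact that such a solution must be proportional to $\tfrac{1}{\lambda_2-\lambda_1}$ transported by a point transformation. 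A marginally shorter alternative for case 3 is to note directly that $\tfrac{y_1y_2}{W}\sim\Delta(x)^{-\frac12}\sim x^{-\frac12}$ is not single-valued while $\tfrac{y_j^2}{W}$ carry exponential singularities, so no nonzero combination is analytic.
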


Let us remark that for non-singular LDEs  the Lie algebra of analytic linear infinitesimal symmetries is of maximal dimension 4, namely for $\frac{d^2y}{dx^2}=0$ it is generated by
$$y\tfrac{\partial}{\partial y}, \quad \tfrac{\partial}{\partial x}, \quad x\tfrac{\partial}{\partial x}, \quad x\big(x\tfrac{\partial}{\partial x}+y\tfrac{\partial}{\partial y}\big),$$
while the Lie algebra of \emph{all} analytic infinitesimal symmetries, i.e. infinitesimal point symmetries of the form $Y=G(x,y)\partial_x+F(x,y)\partial_y$, is of dimension 8 (it is well known to be isomorphic to $\mathfrak{sl}_3(\C)$). 

\section{Proofs}
We will review and adapt to our needs some basics of the theory of singularities of linear differential systems which can be found in some form in most standard references, e.g. in \cite{BaVa,Ba,BJL12,IlYa,Mal1,SP,Sib}.
In the case of regular singularities the analytic classification agrees with a formal one, and in the case of non-resonant irregular singularities 
the analytic modulus consists of a set of formal invariants and of a conjugacy equivalence class of a collection of Stokes matrices.

\begin{proof}[\textbf{Proof of Theorem~\ref{thm-A}}]
	Follows from Theorems~\ref{thm-B1}, \ref{thm-B2} and~\ref{thm-B3}.
\end{proof}

\paragraph{Regular singular points.}
If $\nu=0$, the singular point is of Fuchsian kind and, according to the general theory \cite[Theorem 16.16]{IlYa}, the companion system \eqref{eq:LT-irregsyst} is analytically equivalent by a gauge transformation $v=T(x)\tilde v$ to a normal form
\begin{equation}\label{eq:LT-syst}
 \delta_0 \tilde v=\begin{pmatrix} \lambda_1 & \epsilon x^{\lambda_1-\lambda_2}\\ 0& \lambda_2 \end{pmatrix}\tilde v,
\end{equation}
where $\epsilon\in\{0,1\}$ and $\epsilon\neq 0$ only if $\lambda_1-\lambda_2\in\Z_{\geq 0}$.
Therefore the system \eqref{eq:LT-irregsyst}  possesses a fundamental solution matrix 
$V(x)=T(x)\left(\begin{smallmatrix} x^{\lambda_1} & \epsilon x^{\lambda_1}\log x \\[3pt] 0& x^{\lambda_2} \end{smallmatrix}\right)$, 
where $T(x)=\big(T_{ij}(x)\big)$ is analytic and 
$$T(0)=\begin{cases}
\left(\begin{smallmatrix} 1 & 1 \\[3pt] \lambda_1 & \lambda_2 \end{smallmatrix}\right) & \text{if } \ \lambda_1\neq\lambda_2,\\[6pt]
\left(\begin{smallmatrix} 1 & 0 \\[3pt] \lambda_1 & 1 \end{smallmatrix}\right) & \text{if}\ \lambda_1=\lambda_2\ \text{in which case } \epsilon=1.
\end{cases}$$
The complete analytic invariant of the system is given by the pair $\{\lambda_1,\lambda_2\}$ and by $\epsilon\in\{0,1\}$.
The monodromy matrix $M$ of this fundamental solution, $V(e^{2\pi i}x)=V(x)M$, is then given by
$M=\left(\begin{smallmatrix} e^{2\pi i\lambda_1} & \epsilon\, 2\pi i e^{2\pi i\lambda_1} \\[3pt] 0 & e^{2\pi i\lambda_2} \end{smallmatrix}\right).$

The LDE has therefore a solution basis
\begin{equation}\label{eq:LT-regsol}
y_1(x)=T_{11}(x)x^{\lambda_1},\qquad y_2(x)=T_{12}(x)x^{\lambda_2}+\epsilon T_{11}(x)x^{\lambda_1}\log x.
\end{equation}

\begin{proof}[\textbf{Proof of Lemma~\ref{lemma-1}}]
The weak non-resonance condition is equivalent to $\epsilon=0$ as well as to the diagonalizability of the monodromy $M$.	
\end{proof}

\begin{proof}[\textbf{Proof of Theorem~\ref{thm-B1}}]
The statement \textit{1} is a corollary of \textit{2}.	

\smallskip\noindent
\textit{2(a) Weakly non-resonant regular singularity:}
By the above considerations, the LDE \eqref{eq:LT-irregODE} has a solution basis 
\eqref{eq:LT-regsol} with $\epsilon=0$, where $T_{1j}(x)$ is an analytic germ with $T_{1j}(0)=1$.
We are looking for an analytic transformation $\tilde x=\phi(x),\ \tilde y=t(x)y$ \eqref{eq:LT-ODEpoint}, 
such that
$$T_{1j}(x)x^{\lambda_j}=t(x)\cdot\phi(x)^{\lambda_j},\ j=1,2.$$
Writing $\phi(x)=x(1+g(x))$, $g(0)=0$,
then $g(x)$ is a solution to
\begin{align*}
\log\left(\tfrac{T_{12}}{T_{11}}\right)\!\!(x)=(\lambda_2-\lambda_1)\log(1+g(x)),
\end{align*}
where the right-hand side is an analytic function of $x$ and $g$ whose derivative with respect to $g$ at $(x,g)=0$ is $\lambda_2-\lambda_1\neq 0$, so it has by the implicit function theorem a unique analytic solution $g(x)$ with $g(0)=0$.
Then also $t(x)=T_{11}(x)(1+g(x))^{-\lambda_1}$ is an analytic germ,  $t(0)=1$.

\medskip\noindent
\textit{2(b) Regular singularity with $\lambda_1=\lambda_2=:\lambda$:}
The LDE \eqref{eq:LT-irregODE} has a solution basis 
\eqref{eq:LT-regsol} with $\epsilon=1$, where $T_{11}(0)=1$, $T_{12}(0)=0$.
The transformation equation we want to solve is
\begin{equation*}
T_{11}(x)x^{\lambda}=t(x)\cdot\phi(x)^{\lambda},\qquad T_{12}(x)x^{\lambda}+T_{11}(x)x^{\lambda}\log x=t(x)\cdot\phi(x)^{\lambda}\log\phi(x).
\end{equation*}
Writing $\phi(x)=x(1+g(x))$, $g(0)=0$,
then 
\begin{align*}
\tfrac{T_{12}}{T_{11}}(x)=\log(1+g(x)), \quad\text{hence } \ g(x)=e^{\frac{T_{12}}{T_{11}}(x)}-1,
\end{align*}
and $t(x)=T_{11}(x)(1+g(x))^{-\lambda}$, $t(0)=1$. 

\medskip\noindent
\textit{2(c) Strictly resonant regular singularity with $\lambda_1-\lambda_2=k\in\Z_{>0}$:}
The LDE \eqref{eq:LT-irregODE} has a solution basis 
\eqref{eq:LT-regsol} with $\epsilon=1$, where $T_{11}(0)=T_{12}(0)=1$.
The transformation equation we want to solve is
\begin{align*}
T_{11}(x)x^{\lambda_1}&=t(x)\cdot\phi(x)^{\lambda_1},\\
k\left(T_{12}(x)x^{\lambda_2}+T_{11}(x)x^{\lambda_1}\log x\right)&=t(x)\cdot\left(\phi(x)^{\lambda_2}+k\phi(x)^{\lambda_1}\log\phi(x)\right).\\
\end{align*}
Writing $\phi(x)=cx(1+g(x))$, $g(0)=0$,
let $c=k^{-\frac{1}{k}}$, then $g(x)$ is a solution to
\begin{align*}
\tfrac{T_{12}}{T_{11}}(x)=(1+g(x))^{-k}+x^k\log(1+g(x))-\tfrac{x^k}{k}\log k.
\end{align*}
The derivative of the right side with respect to $g$ at $(x,g)=0$ is $\lambda_2-\lambda_1=-k$, therefore the equation
has a unique analytic solution $g(x)$ with $g(0)=0$.
Then $t(x)=T_{11}(x)c^{-\lambda_1}(1+g(x))^{-\lambda_1}$, $t(0)=c^{-\lambda_1}$. 
\end{proof}

\paragraph{Non-resonant irregular singular points.}
Let $\lambda_j(x)=\lambda_j^{(0)}+\ldots+\lambda_j^{(\nu)}x^\nu$, $j=1,2$, be modulo $x^{\nu+1}$ the roots of the characteristic polynomial $\lambda^2-p(x)\lambda-q(x)=0$.
If $\nu>0$ and $\lambda_1(0)\neq\lambda_2(0)$, then the singular point is non-resonant irregular and, according to the general theory \cite[\S 20]{IlYa}, the companion system \eqref{eq:LT-irregsyst} possesses a formal fundamental solution matrix 
$\hat V(x)=\hat T(x)e^{\int\left(\begin{smallmatrix} \lambda_1 & 0\\ 0& \lambda_2 \end{smallmatrix}\right)\delta_\nu^{-1}}$, where $\hat T(x)=\left(\hat T_{ij}(x)\right)$ is a formal power series, $\hat T(0)=\left(\begin{smallmatrix} 1 & 1 \\[3pt] \lambda_1^{(0)} & \lambda_2^{(0)} \end{smallmatrix}\right)$.
Correspondingly, the LDE has a formal solution basis
\begin{equation}\label{eq:LT-irregsol}
\hat y_1(x)=\hat T_{11}(x)e^{\int\lambda_1(x)\delta_\nu^{-1}},\qquad \hat y_2(x)=\hat T_{12}(x)e^{\int\lambda_2(x)\delta_\nu^{-1}}.
\end{equation}
A complete \emph{formal invariant} of the system \eqref{eq:LT-irregsyst} with respect to formal gauge transformations fixing $x$ \eqref{eq:LT-systemgauge} is formed by the pair of meromorphic 1-forms $\{\lambda_1(x)\delta_\nu^{-1},\lambda_2(x)\delta_\nu^{-1}\}$.
If one allows also for transformations $x\mapsto\phi(x)$, then it is always possible to transform analytically the pair to a canonical form where $\big(\lambda_2(x)-\lambda_1(x)\big)\delta_\nu^{-1}=(1+\mu\tilde x^{\nu})\tilde\delta_\nu^{-1}$, where $\mu$ is  well-defined up to the $\pm$ sign.

A \emph{Stokes direction} (also known as \emph{separating}) $\alpha\in\R$ is defined by $\Re\big(e^{-\nu\alpha}(\lambda_2^{(0)}-\lambda_1^{(0)})\big)=0$,
and an \emph{anti-Stokes direction} (also known as \emph{singular}) $\beta\in\R$ is defined by $\Im\big(e^{-\nu\beta}(\lambda_2^{(0)}-\lambda_1^{(0)})\big)=0$.
After the normalization $\lambda_2^{(0)}-\lambda_1^{(0)}=1$, this means $\alpha\in\frac{\pi}{\nu}\Z$, $\beta\in\frac{\pi}{2\nu}+\frac{\pi}{\nu}\Z$.
Let $\{\alpha_l\}_{l\in\Z}$, resp. $\{\beta_l\}_{l\in\Z}$, be all the Stokes directions, resp. anti-Stokes directions in their order, $\alpha_{l+2\nu}=\alpha_l+2\pi$. 
By a classical theorem of Horn, Trjitzinsky, Hukuhara, Turittin, and others \cite[Theorem 20.16]{IlYa}, the formal power series $\hat T(x)$ is asymptotic to a unique bounded analytic matrix-valued function $T_{\alpha_l}(x)$ on every open sector $\Omega_l$ covering exactly one Stokes direction $\arg(x)=\alpha_l$.
Let $\Omega_{2\nu}=\Omega_0,\ldots,\Omega_{2\nu-1}$ be a cyclic covering by such sectors of a pointed neighborhood of the origin, 
such that the intersection of two neighboring sectors covers exactly one anti-Stokes direction (when considered on the Riemann surface of $\log x$),
let $T_{\alpha_{2\nu}}=T_{\alpha_0},\ \ldots,\ T_{\alpha_{2\nu-1}}$ be the associated sectoral transformations,
and let $V_{\alpha_l}(x)=T_{\alpha_l}(x)e^{\int\left(\begin{smallmatrix} \lambda_1(x) & 0\\ 0& \lambda_2(x) \end{smallmatrix}\right)\delta_\nu^{-1}}$ be the sectoral fundamental matrix solutions, 
$V_{\alpha_l+2\pi}(x)=V_{\alpha_l}(x)e^{2\pi i\left(\begin{smallmatrix} \lambda_1^{(\nu)} & 0\\ 0& \lambda_2^{(\nu)} \end{smallmatrix}\right)}$.
For each anti-Stokes direction $\beta$ the \emph{Stokes matrix} $St_\beta$ is defined by
$$V_{\beta-\frac{\pi}{2\nu}}=V_{\beta+\frac{\pi}{2\nu}}St_{\beta}.$$
The collection of the Stokes matrices $\{St_{\beta_0},\ldots St_{\beta_{2\nu-1}}\}$ modulo simultaneous conjugation by diagonal matrices is a complete analytic invariant of the system \eqref{eq:LT-irregsyst} with given formal invariants (sometimes called Malgrange--Sibuya modulus).

\begin{proof}[\textbf{Proof of Proposition~\ref{prop:LT-formalirreg}}]
By the above considerations, the LDE \eqref{eq:LT-irregODE} has a formal solution basis 
\eqref{eq:LT-irregsol}, where $\hat T_{1j}(x)$ is formal power series that is Borel $\nu$-summable, $\hat T_{1j}(0)\neq 0$.
After an eventual analytic change of the $x$-variable, we can suppose that the formal invariants $\{\lambda_1(x)\delta_\nu^{-1},\lambda_2(x)\delta_\nu^{-1}\}$ are in a canonical form with $\lambda_2(x)-\lambda_1(x)=1+\mu x^\nu$.
We are looking for a formal transformation 
\begin{equation*}
	\tilde x=\hat\phi(x)=x(1+x^{\nu}\hat g(x)),\qquad \tilde y=\hat t(x)y,
\end{equation*}
such that
$$\hat T_{1j}(x)e^{\int \lambda_j(x)\delta_\nu^{-1}}=\hat t(x)\cdot\big(e^{\int \lambda_j\delta_\nu^{-1}}\big)\circ\hat\phi(x),\ j=1,2.$$
Therefore $\hat g(x)$ is a solution to
\begin{align*}
	\log\left(\tfrac{\hat T_{12}}{\hat T_{11}}\right)\!\!(x)=\frac{1}{\nu x^\nu}-\frac{1}{\nu x^\nu(1+x^\nu\hat g)^\nu}+
	\mu(1+x^\nu\hat g),
\end{align*}
where the right-hand side is an analytic function of $x$ and $g$ whose derivative with respect to $\hat g$ at $x=0$ is $\lambda_2^{(0)}-\lambda_1^{(0)}=1$, so  by the formal implicit function theorem it has a unique formal solution $\hat g(x)$ with $\hat g(0)=\log\big(\frac{\hat T_{12}(0)}{\hat T_{11}(0)}\big)$.
Since $\hat\phi(x)=x+O(x^{\nu+1})$ then also $\hat t(x)$ is a formal power series, $\hat t(0)=\hat T_{11}(0)e^{-\lambda_1^{(0)}\hat g(0)}$.
	
One can also solve the equation on the sectors $\Omega_\alpha$ (see Proposition~\ref{prop:LT-IFT} in the Appendix) and deduce that $\hat\phi(x),\ \hat t(x)$ are Borel $\nu$-summable in the same directions as is the pair $(\hat T_{11}(x),\hat T_{12}(x))$.
\end{proof}

\begin{proof}[\textbf{Proof of Theorem~\ref{thm-B2}}]~\\[3pt]
\noindent	
\textit{1. Analytic equivalence:}
Let us show that if two LDEs \eqref{eq:LT-irregODE} with non-resonant irregular singularity have their companion systems \eqref{eq:LT-irregsyst} analytically equivalent, than so are the equations.
After an analytic change of $x$ we can assume that the formal invariants $\{\lambda_1(x)\delta_\nu^{-1},\lambda_2(x)\delta_\nu^{-1}\}$ are in the canonical form and are the same for the two systems.
Up to right multiplication of $\hat T(x)$ by a constant diagonal matrix, we can also suppose that their collections of Stokes matrices agree.
Therefore, for a singular direction $\beta$:
\begin{align*}
T_{1j,\beta-}(x)&=T_{1j,\beta+}(x)+ s_{\beta} T_{1i,\beta+}(x)e^{\int(\lambda_i(x)-\lambda_j(x))\delta_\nu^{-1}},
&& T_{1i,\beta-}(x)&=T_{1i,\beta+}(x),\\
\tilde T_{1j,\beta-}(x)&=\tilde T_{1j,\beta+}(x)+ s_{\beta}\tilde T_{1i,\beta+}(x)e^{\int(\lambda_i(x)-\lambda_j(x))\delta_\nu^{-1}},
&& \tilde T_{1i,\beta-}(x)&=\tilde T_{1i,\beta+}(x),\\
\end{align*}
for the corresponding Stokes multiplier $s_\beta$ on the position $(j,i)$ of $St_\beta$, $(j,i)=(1,2)$ or $(2,1)$ 
depending on $\beta$ such that $e^{\int(\lambda_i(x)-\lambda_j(x))}$ is flat when $x\to 0$, $\arg x=\beta$.
The conjugation equations to solve are
$$T_{1l,\beta\pm}(x)e^{\int \lambda_l(x)\delta_\nu^{-1}}=t_{\beta\pm}(x)\cdot\big(\tilde T_{1l,\beta\pm} e^{\int \lambda_l\delta_\nu^{-1}}\big)\circ\phi_{\beta\pm}(x),\quad l=1,2.$$
Comparing the above expressions we see that on the intersection sector $\Omega_{\beta+\frac{\pi}{2\nu}}\cap \Omega_{\beta-\frac{\pi}{2\nu}}$ both $\phi_{\beta-}(x)$ and $\phi_{\beta+}(x)$ solve the same functional equation
$$\frac{T_{1j,\beta-}}{T_{1i,\beta\pm}}e^{\int \lambda_j-\lambda_i\delta_\nu^{-1}}=
\frac{\tilde T_{1j,\beta-}}{\tilde T_{1i,\beta\pm}}e^{\int \lambda_j-\lambda_i\delta_\nu^{-1}}\circ\phi_{\beta\pm},$$
hence they will be equal if existence and unicity of a bounded sectoral solution is ensured, and so they will glue up to form an analytic germ $\phi(x)$. And similarly with $t_{\beta+}=t_{\beta-}$.
Writing $\phi_{\beta\pm}(x)=x(1+x^{\nu}g_{\beta\pm}(x))$,
the conjugation equation becomes
\begin{align*}
\log(\tfrac{T_{1j,\beta\pm}}{T_{1i,\beta\pm}})(x)=
\log(\tfrac{\tilde T_{1j,\beta\pm}}{\tilde T_{1i,\beta\pm}})\!\left(x+x^{\nu+1}g_{\beta\pm})\right)+\tfrac{1}{\nu x^\nu}-\tfrac{1}{\nu x^\nu(1+x^{\nu}g_{\beta\pm})^\nu}+ \mu\log(1+x^{\nu}g_{\beta\pm}(x)),
\end{align*}
which by virtue of Proposition~\ref{prop:LT-IFT} in the Appendix, has a unique bounded analytic solution $g_{\beta\pm}(x)$ on $\Omega_{\beta\pm\frac{\pi}{2\nu}}$ satisfying
$g_{\beta\pm}(0)=\log\left(\frac{T_{1j}(0)\tilde T_{1i}(0)}{T_{1i}(0)\tilde T_{1j}(0)}\right)$.

\medskip\noindent
\textit{2. Realization:}
Given $\nu>1$, formal invariants $\lambda_1(x),\lambda_2(x)$, and a collection of projective Stokes operators, we want to show that there exists an equation
\eqref{eq:LT-irregODE} of which they are analytic invariants.

By the Birkhoff--Malgrange--Sibuya theorem (\cite{Bir2}, \cite{Mal1}, \cite{Sib}, \cite[Theorem 20.22]{IlYa}, \cite[Theorem 4.5.1]{BaVa}) there exists a differential system $\delta_\nu v=A(x)v$ with the given formal invariants realizing the associated Stokes matrices as its analytic invariants. Under the non-resonance condition one may as well assume 
that $A(0)=\begin{pmatrix} \lambda_1(0) & 1 \\ 0 & \lambda_2(0)\end{pmatrix}$. 
Writing $A=(A_{ij})$, then the system satisfied by $\tilde v=\begin{pmatrix} 1 & 0 \\ A_{11}(x) & A_{12}(x)\end{pmatrix}v$ is of the companion form.

For the sake of completeness let us give a rough sketch of an alternative proof following the ideas of Malgrange \cite{Mal1, Mal2}.
Let $f(x)=e^{\int (\lambda_2-\lambda_1)\delta_\nu^{-1}(x)}$. For an anti-Stokes direction $\beta$ and a projective Stokes operator $\sigma_\beta$, let $\psi_{\beta}(x)\sim x+O(e^{-\frac{c}{|x|^\nu}})$ (with some $c>0$) be defined by solving the equation $\sigma_\beta\circ f(x)=f\circ\psi_\beta(x)$ on a small sector bisected by $\beta$,
$x\in \Omega_{\beta+\frac{\pi}{2\nu}}\cap\Omega_{\beta-\frac{\pi}{2\nu}}$, where 
$\Omega_\alpha=\{|\arg x-\alpha|<\frac{\pi}{\nu}-\eta,\ |x|<\rho\}$ for some $0<\eta<\frac{\pi}{2\nu}$, $|\rho|>0$.
We want to find bounded analytic sectoral maps $\phi_\alpha(x)=x+O(x^2)$, $x\in \Omega_\alpha$  that solve the cohomological equation
\begin{equation}\label{eq:LT-cohom}
\psi_\beta=\phi_{\beta-\frac{\pi}{2\nu}}\circ\phi_{\beta+\frac{\pi}{2\nu}}^{\circ-1}.
\end{equation}
Then $f\circ\phi_{\beta-\frac{\pi}{2\nu}}=\sigma_\beta\circ(f\circ\phi_{\beta+\frac{\pi}{2\nu}})$, and therefore 
$\Delta(x)=-2\Cal S_\nu(f\circ\phi_\alpha)$ for $x\in\Omega_\alpha$ glue up to an analytic germ on a neighborhood of 0, and the equation \eqref{eq:LT-irregODE} with $p(x)=\lambda_1(x)+\lambda_2(x)$  and $q(x)=\frac{1}{4}\big(\Delta(x)-p(x)^2+2\delta_\nu p(x)\big)$ realizes the invariants.
The problem of solving the cohomological equation \eqref{eq:LT-cohom} can be easily solved in the $\Cal C^\infty$-smooth category by some sectoral germs $\varphi_\alpha$ exponentially tangent to identity (cf. \cite[\S 4.3]{BaVa}). One then can obtain the bounded analytic solutions $\phi_\alpha(x)$
after correcting $\varphi_\alpha(x)$ by a $\Cal C^\infty$ germ $g(x)$,
$\phi_\alpha=\varphi_\alpha\circ g^{\circ-1}$,
obtained as a solution to the Beltrami equation $\frac{\partial_{\bar x}g}{\partial_{x}g}=h$ on a neighborhood of the origin,
where $h(x)$ is an exponentially flat $\Cal C^\infty$ germ defined by 
$h:=\frac{\partial}{\partial\bar x}\varphi_\alpha\big/\frac{\partial}{\partial x}\varphi_\alpha$ which is independent of the sector $\Omega_\alpha$ since
$$0=\tfrac{\partial}{\partial\bar x}\tilde\psi_{\beta}=
\tfrac{\partial}{\partial\bar x}\left(\varphi_{\beta-\frac{\pi}{2\nu}}\circ\varphi_{\beta+\frac{\pi}{2\nu}}^{\circ-1}\right)=
\left(\tfrac{\partial}{\partial\bar x}\varphi_{\beta-\frac{\pi}{2\nu}}-
\tfrac{\partial}{\partial x}\varphi_{\beta-\frac{\pi}{2\nu}}\cdot
\frac{\frac{\partial}{\partial\bar x}\varphi_{\beta+\frac{\pi}{2\nu}}}{\frac{\partial}{\partial x}\varphi_{\beta+\frac{\pi}{2\nu}}}
\right)\cdot
\tfrac{\partial}{\partial\bar x}(\bar \varphi_{\beta+\frac{\pi}{2\nu}}^{-1})$$
by the chain rule for the Wirtinger derivative.
\end{proof}

\begin{remark}
In the formal equivalence problem for non-resonant irregular singularities one can assume that
$\hat T_{11}(0)=\hat T_{12}(0)=1$, hence that $\hat g(0)=0$, i.e. $\hat\phi(x)=x+O(x^{\nu+2})$.
On the other hand, in the analytic equivalence problem one may have $\frac{\hat T_{12}(0)}{\hat T_{11}(0)}\neq1$ and $\hat g(0)\neq 0$, 
 i.e. $\hat\phi(x)=x+O(x^{\nu+1})$. To solve the equation for $\hat t(x)$, one needs that  $\hat\phi(x)=x+O(x^{\nu+1})$.
This observation is at the heart of the following two examples. 
\end{remark}

\begin{example}(Formally equivalent but analytically non-equivalent resonant irregular singularities with analytically equivalent companion systems.)\label{example:LT-1}

Consider the reducible equation of Poincar\'e rank $\nu=2$
$$\big(\delta_2-\alpha_2(x)\big)\big(\delta_2-\alpha_1(x)\big)y=0,$$
with 
$$\alpha_1(x)=1,\qquad \alpha_2(x)=1+x+x^2+\frac{cx^3}{1+cx},\qquad c\in\C.$$
Its basis of solutions is
\begin{align*}
y_1(x)&=e^{-\frac{1}{2x^2}},\quad x\in\C,\\
y_2(x)&=e^{-\frac{1}{2x^2}}\int_{0+}^x e^{-\frac{1}{t}}\tfrac{1+ct}{t^2}dt
=e^{-\frac{1}{2x^2}}\int_{\frac{1}{x}}^{+\infty} e^{-s}(1+\tfrac{c}{s})ds,\quad x\in\C\setminus\R_{\leq 0},\end{align*}
where the integration path in the $s$-variable follows horizontal rays.
The projective Stokes matrices of the associated companion systems are easily calculated using the residue to be $St_0=\id$ and 
$St_\pi=\left(\begin{smallmatrix} 1 & 2\pi ic\\ 0 & 1 \end{smallmatrix}\right)$, which are conjugated by diagonnal matrices for all $c\neq 0$.
Up to analytic gauge transformation, the companion systems can be written as 
$$\delta_2 v=\left(\begin{smallmatrix} \alpha_1(x) & 1\\ 0 & \alpha_2(x) \end{smallmatrix}\right)v,$$
and are all formally equivalent to each other for all $c\in\C$ by a gauge transformation fixing $x$.
Indeed, writing the formal gauge transformation between two such systems with $c$ and $\tilde c$ as $v=\left(\begin{smallmatrix} 1 & \hat f(x)\\ 0 & \frac{1+cx}{1+\tilde cx} \end{smallmatrix}\right)\tilde v$,
then $\hat f(x)$ must satisfy
$$\delta_1\hat f=\tfrac{c-\tilde c}{1+\tilde cx}-(1+x+\tfrac{\tilde cx^2}{1+\tilde cx})\hat f,$$
which has a unique formal solution with $\hat f(0)=c-\tilde c$.
Hence the companion systems are all analytically equivalent for $c,\tilde c\neq 0$.

Also the equations are formally equivalent for all $c\in\C$:
indeed, it suffices to solve the conjugation equations
$$e^{-\frac{1}{x}}\tfrac{1+cx}{x^2}=e^{-\frac{1}{\phi}}\tfrac{1+\tilde c\phi}{\phi^2}\cdot\tfrac{d\phi}{dx},\qquad e^{-\frac{1}{2x^2}}=\hat t(x)e^{-\frac{1}{2\phi^2}}.$$
Writing $\phi(x)=x+x^2g(x)$, then $g(x)$ satisfies an analytic ODE
$$\delta_1 g=e^{-\frac{g}{1+xg}}(1+xg)^2\tfrac{1+cx}{1+\tilde cx(1+xg)}-1-2xg,$$
with a ``saddle-node'' type singularity at $(x,g)=(0,0)$, which is known to have a unique formal solution $\hat g(x)=(c-\tilde c)x+O(x^2)$. Then $\hat t(x)=-\frac{c-\tilde c}{2}+O(x)$ can also be formally solved.

On the other hand, in the problem of analytic equivalence one needs to equalize the bases $c y_1(x),y_2(x)$ and
$\tilde c\tilde y_1(\tilde x),\tilde y_2(\tilde x),$ with respect to which the Stokes matrices agree, and the conjugation equations
$$e^{-\frac{1}{x}}\tfrac{1+c x}{c x^2}
=e^{-\frac{1}{\phi}}\tfrac{1+\tilde c\phi}{\tilde c\phi^2}\cdot\tfrac{d\phi}{dx},\qquad 
ce^{-\frac{1}{2x^2}}=\tilde c\hat t(x)e^{-\frac{1}{2\phi^2}},$$
are solved by the same $\hat\phi(x)$, but this time have no formal solution $t(x)$ if $c\neq\tilde c$, and hence there is no analytic one either.
\end{example}

\begin{proof}[\textbf{Proof of Proposition \ref{prop:LT-reducible}}]~\\[3pt]
	\noindent
	\textit{1.} Indeed, for a reducible LDE \eqref{eq:LT-reducible} one has $\Delta(x)=r(x)^2-2\delta_\nu r(x)$ for $r(x)=\alpha_2(x)-\alpha_1(x)$,
	and moreover $y_1(x)=e^{\int \alpha_1(x)\delta_\nu^{-1}}$ is convergent solution. 
	Vice-versa, if $\Delta(x)=r(x)^2-2\delta_\nu r(x)$ for an analytic $r(x)$, then the equation factors as \eqref{eq:LT-reducible} with 
	$\alpha_2(x)=\frac{1}{2}\big(p(x)+r(x)\big)$ and $\alpha_1(x)=\frac{1}{2}\big(p(x)-r(x)\big)$.
	If $y(x)$ is a ``convergent solution'', then the equation factors as \eqref{eq:LT-reducible} with $\alpha_1(x)=\delta_\nu\log y(x)$ and $\alpha_2(x)=p(x)-\alpha_1(x)$, i.e. $r(x)=p(x)-2\alpha_1(x)$.
	Finally, it is known that the companion system is analytically reducible, i.e. analytically equivalent to one in a triangular form, if and only if the Stokes matrices of  are either all upper triangular or all lower triangular (indeed the formal diagonalizing transformation for a triangular system is triangular and therefore the Stokes matrices will have the same triangular form, and vice versa, a solution to the sectoral cohomological equation with triangular Stokes matrices exists that is triangular). 
	The system $\delta_\nu v=A(x)v$, $A(x)=\big(a_{ij}(x)\big)$, realizing triangular Stokes data can be assumed upper triangular and with $a_{12}(0)=1$ (since $a_{11}(0)\neq a_{22}(0)$), and is therefore it is conjugated to  
	$\delta_\nu v=\begin{pmatrix}\alpha_1(x)&1\\0&\alpha_2(x)\end{pmatrix}v$ for some $\alpha_1(x),\alpha_2(x)$, 
	hence analytically equivalent to the companion system of a reducible LDE \eqref{eq:LT-reducible}.
	
	\medskip\noindent
	\textit{2.} The normal form LDE \eqref{eq:LT-irregnormalform} has two linearly independent solutions $y_j(x)=e^{\int\lambda_j(x)\delta_\nu^(-1)}$, $j=1,2$, the equation \eqref{eq:LT-symmetricpower} has an analytic solution
	$h(x)=\frac{1}{\lambda_2(x)-\lambda_1(x)}$. 
	In general, if $y_j(x)=t_j(x)e^{\int\lambda_j(x)\delta_\nu^(-1)}$, $j=1,2$, is a pair of linearly independent solutions of the LDE, $f(x)=\frac{y_2(x)}{y_1(x)}$, then 
	$h(x)=\frac{f}{\delta_\nu f}=\frac{1}{\lambda_2(x)-\lambda_1(x)+\delta_\nu\log\frac{t_2}{t_1}}$ is an analytic solution to
	\eqref{eq:LT-symmetricpower}.
	
	Supposing that $h(x)$ is a nontrivial analytic solution to \eqref{eq:LT-symmetricpower}, and let us show that the LDE is reducible. 
	First, let us notice that form \eqref{eq:LT-symmetricpower} it follows that $h(x)^2\Delta(x)=c^2+O(x^{2\nu+1})$ for some $0\neq c\in\C$.
	Let $r(x,c):=\frac{c-\delta_\nu h(x)}{h(x)}$ and $\Delta(x,c^2):=r_c(x)^2-2\delta_\nu r_c(x)=\frac{c^2-\big(\delta_\nu h(x)\big)^2+2h(x)\delta_\nu^2 h(x)}{h(x)^2}$, $c\in\C$.
	Considering the equation \eqref{eq:LT-symmetricpower} as a non-homogeneous first order linear differential equation for unknown $\Delta$ with coefficients determined by $h(x)$, then $\Delta(x,a^2)$ are its solutions for all $a\in\C$ and for the reason of dimension there are no other solutions. Hence $\Delta(x)=\Delta(x,c^2)$ and the LDE is reducible, with 
	$r(x)=r(x,\pm c)$, two different solution to $\Delta(x)=r(x)^2-2\delta_\nu r(x)$ \eqref{eq:LT-ricatti},
	and $y_1(x)=e^{\int \alpha_1(x)\delta_\nu^{-1}}$, $y_2(x)=h(x)e^{\int \alpha_2(x)\delta_\nu^{-1}}$,
	where $\alpha_1(x)=\frac{1}{2}\big(p(x)-r(x)\big)$ and $\alpha_2(x)=\frac{1}{2}\big(p(x)+r(x)\big)$
	are two linearly independent convergent solutions to the LDE. 
	
	Vice-versa, if $r_j(x)$, $j=1,2$, are two different solution to \eqref{eq:LT-ricatti},
	then $h(x)=\frac{1}{r_2(x)-r_1(x)}$, $\delta_\nu\log h(x)=-\frac{r_1(x)+r_2(x)}{2}$, is an analytic solution to \eqref{eq:LT-symmetricpower}.
	
	Finally, a system with an irreducible irregular singularity and formal invariants $\{\lambda_1(x)\delta_\nu^{-1},\lambda_2(x)\delta_\nu^{-1}\}$ is analytically equivalent to
	$\delta_\nu v=\begin{pmatrix}\lambda_1(x)&0\\0&\lambda_2(x)\end{pmatrix}v$,
	if and only if its collection of Stokes matrices is trivial, and one concludes by Theorem~\ref{thm-B2}.
\end{proof}

\begin{proof}[\textbf{Proof of Theorem~\ref{thm-D}}]
	Two $2\times 2$ linear differential systems with non-resonant irregular singularity at the origin of Poincar\'e rank $\nu=1$ are analytically equivalent if and only if they have the same formal invariants and the same trace of monodromy.
	Indeed, the monodromy matrix of such system with respect to the sectoral fundamental solution $V_{-\frac{\pi}{2}}(x)$ is of the form 
	$M=\left(\begin{smallmatrix} e^{2\pi i \lambda_1^{(1)}} & 0 \\ 0 & e^{2\pi i \lambda_2^{(1)}} \end{smallmatrix}\right)
	\left(\begin{smallmatrix} 1 & 0 \\[3pt] s_\pi & 1 \end{smallmatrix}\right)
	\left(\begin{smallmatrix} 1 & s_0 \\[3pt] 0 & 1 \end{smallmatrix}\right),$
	where the product $s_0s_\pi\neq 0$ is the invariant characterizing the pair of Stokes matrices.
	Therefore $\det M=e^{2\pi i (\lambda_1^{(1)}+\lambda_2^{(1)})}=e^{2\pi i p^{(1)}}$ and the quantity
	$$(\det M)^{-\frac{1}{2}}\tr M=e^{\pi i (\lambda_2^{(1)}-\lambda_1^{(1)}}(1+s_0s_\pi)+e^{\pi i(\lambda_1^{(1)}-\lambda_2^{(1)}}=
	2\cos \pi\mu+e^{\pi i\mu}s_0s_\pi$$
	is a natural invariant.
	On the other hand the eigenvalues of the residue matrix of the companion system at $x=\infty$ are
	$-\frac{p^{(1)}-1}{2}\pm\frac{1}{2}\sqrt{\Delta^{(2)}+1}$, where $\Delta^{(2)}=(p^{(1)})^2+4q^{(2)}-2p^{(1)}$,
	and therefore
	$$(\det M)^{-\frac{1}{2}}\tr M=2\cos\pi\sqrt{\Delta^{(2)}+1},$$
	from which one has (cf. \cite[p. 144]{MR})
	$$s_0s_\pi=4e^{-\pi i\mu}\sin\pi\big(\frac{\mu+\sqrt{\Delta^{(2)}+1}}{2}\big)\sin\pi\big(\frac{\mu-\sqrt{\Delta^{(2)}+1}}{2}\big).$$
	For given formal and analytic invariants, the equation for $\Delta^{(2)}$ can be always solved.

	If the LDE is reducible, written as \eqref{eq:LT-reducible},
	then there are only two analytic equivalence classes within a given formal equivalence class: one corresponds to both Stokes matrices trivial, and the other to one non-trivial Stokes matrix conjugated to $\left(\begin{smallmatrix} 1 & 1 \\[3pt] 0 & 1 \end{smallmatrix}\right)$.
	The equation has two canonical solutions: 
	\begin{align*}
	y_1(x)&=e^{\int \alpha_1(x)\delta_1^{-1}},\\
	y_2(x)&=e^{\int \alpha_1(x)\delta_1^{-1}}\int_{0}^{x}e^{\int \alpha_2(x)-\alpha_1(x)\delta_1^{-1}}\delta_1^{-1},
	\end{align*}
	$x\in\C\sminus(\alpha_2^{(0)}-\alpha_1^{(0)})\R_{\leq 0}$,  where the integration path follows a real trajectory of $\frac{\delta_1}{\alpha_2(x)-\alpha_1(x)}$.
	Assuming that $\alpha_2(x)-\alpha_1(x)=1+\mu x+x^2r(x)$ for some analytic germ $r(x)$,
	then 
	$$y_2(x)=y_1(x)\int_{0+}^{x}e^{-\frac{1}{x}}x^{\mu-2} R(x)dx=y_1(x)\int_{\frac{1}{x}}^{+\infty}e^{-s}s^{-\mu} R(\tfrac{1}{s})ds,$$
	where $R(x):=e^{\int r(x)dx}$, the integration in the second integral following a horizontal ray.
	Denoting $y_{2,+}(x)$, resp. $y_{2,-}(x)$, the branch of $y_2(x)$ on $\arg x\in]\pi,2\pi[$, resp. $\arg x\in]0,\pi[$, then for $\arg x=\pi$
	\begin{equation}\label{eq:LT-w+-}
	y_{2,+}(x)-y_{2,-}(x)=y_1(x)s_\pi,\qquad s_\pi=\big(e^{2\pi i\mu}-1\big) \int_{0}^{+\infty}e^{-s}{s}^{-\mu} R(\tfrac{1}{s})ds. 
	\end{equation}
	In particular, if $\alpha_2(x)-\alpha_1(x)=1+\mu x$, i.e. $R(x)=1$, then  $s_\pi=\big(e^{2\pi i\mu}-1\big)\Gamma(1-\mu)= e^{\pi i\mu}\frac{2\pi i}{\Gamma(\mu)}$  which vanishes if and only if $\mu\in\Z_{\leq 0}$.
	And if $\alpha_2(x)-\alpha _1(x)=1+\mu x-x^{2}$, i.e. $R(x)=e^{-x}$, then   
	$s_\pi=\big(e^{2\pi i\mu}-1\big)\sum_{j\geq0}(-1)^{j}\Gamma(1-\mu-j)=2\pi ie^{\pi i\mu}\sum_{j\geq 0}\frac{1}{\Gamma(\mu+j)}$ which is positive for every  $\mu\in\Z$. 
\end{proof}

\paragraph{Non-degenerate resonant irregular singularities.}
Suppose $J_0^\nu\Delta(x)=x$ and let $J_0^\nu p(x)=P(x)$, $P(0)$.
The LDE may be rewritten as
$$\delta_{\nu-\frac12}^2y-(\tfrac{p(x)}{x^{\frac12}}-\tfrac12 x^{\nu-\frac12})\delta_{\nu-\frac12}y-\tfrac{q(x)}{x}y=0,$$
which is non-resonant in the variable $x^{\frac12}$.
Correspondingly it has a formal solution basis
\begin{equation}\label{eq:LT-irregsol1}
\hat y_1(x)=\hat T_{11}(x^{\frac12})x^{-\frac14}e^{\int\frac12(P(x)-x^{\frac12})\delta_\nu^{-1}},\qquad 
\hat y_2(x)=\hat T_{12}(x^{\frac12})x^{-\frac14}e^{\int\frac12(P(x)+x^{\frac12})\delta_\nu^{-1}}, 
\end{equation}
where $\hat T_{11}(0)=\hat T_{12}(0)=1$. 
By the results of the previous section, for every two such LDE for $y$ and $\tilde y$ respectively there exists a unique formal transformation in $x^{\frac12}$
$$\tilde x^{\frac12}=\hat\phi^{\frac12}(x^{\frac12})=x^{\frac12}+O(x^{\nu+\frac12}),\quad
\tilde y=\hat t(x^{\frac12})y,\ \hat t(0)=1,$$
that transforms the respective solutions \eqref{eq:LT-irregsol1} one to the other,
and is Borel $2\nu-1$-summable in the variable $x^{\frac12}$.

\begin{proof}[\textbf{Proof of Proposition~\ref{prop:LT-formalres}}]
Let us show that there exists a formal transformation in $x$
$$\tilde x=\hat\phi(x)=x+O(x^{\nu+1}),\quad \tilde y=\hat t(x)y,\ \hat t(0)=1,$$
between the LDEs. This transformation will necessarily also transform the respective solutions \eqref{eq:LT-irregsol1} one to the other, so by the unicity it will agree with the above one.
We can assume $\tilde\Delta(\tilde x)=\tilde x$, $\Delta(x)=x+O(x^{\nu+1})$, and we construct the transformation $\tilde x=\hat\phi(x)$ as a formal infinite composition $\hat\phi=\ldots\phi_{\nu+2}\circ\phi_{\nu+1}\circ\phi_{\nu}$,
where $\phi_k(x)=x+a_kx^{k+1}$ is such that if $\Delta_k=x+b_kx^{k+1}+O(x^{k+2})$ and $\tilde x=\phi_k(x)$ then $\Delta_{k+1}(\tilde x)=\tilde x+O(\tilde x^{k+2})$.
Plugging this into \eqref{eq:LT-transformationDelta} gives
$x+a_kx^{k+1}+O(x^k+2)=x+(2k-2\nu+1)b_kx^{k+1}+O(x^k+2)$, hence $b_k=\frac{a_k}{2k-2\nu+1}$ is uniquely determined.
If $\hat\phi(x)=x+O(x^{\nu+1})$ and $p(x)=\tilde p(x)+O(x^{\nu+1})$, then also the equation \eqref{eq:LT-transformationp} for $\log\hat t(x)$ has a unique formal solution.
\end{proof}
 
\begin{proof}[\textbf{Proof of Theorem~\ref{thm-B3}}]
If the Stokes operators agree, then by the reasoning of the proof of Theorem~\ref{thm-B2}, the sectoral transformation
between the respective solutions \eqref{eq:LT-irregsol1} of the two LDEs glue up to an analytic transformation in the variable
$x^{\frac12}$
$$\tilde x^{\frac12}=\phi^{\frac12}(x^{\frac12})=x^{\frac12}+O(x^{\nu+\frac12}),\quad
\tilde y=t(x^{\frac12})y,\ \hat t(0)=1.$$
The Taylor expansion of this transformation is a formal transformation that is tangent to identity,
hence by Proposition~\ref{prop:LT-formalres} $\phi(x^{\frac12})$ and $t(x^{\frac12})$ contain only whole powers of $x$,
therefore is analytic in $x$.
\end{proof}

\begin{proof}[\textbf{Proof of Theorem~\ref{thm-D2}}]
By the same reasoning as in the proof of Theorem~\ref{thm-D2}.
\end{proof}

\begin{proof}[\textbf{Proof of Theorem~\ref{thm-E}}]
Up to a composition with transformation $y\mapsto x^m y$, which changes $J_0^\nu p(x)\cdot\delta_\nu^{-1}$ by $m\delta_0^{-1}$ while preserving the monodromy and the Stokes operators,
the meromorphic classification problem is reduced to the analytic one. 	
In fact, in the irregular case meromorphic transformations preserve the canonical sectoral solution bases. 
\end{proof}

\begin{proof}[\textbf{Proof of Theorem~\ref{thm-F}}]
Calculating the coefficients of $\partial_x$ and $\partial_y$ in \eqref{eq:LT-Lie} with \eqref{eq:LT-LieX} and \eqref{eq:LT-LieY} both give $\alpha=\frac{d}{dx}g$.
Then the coefficients of $y\partial_{y_x}$ and  $y_x\partial_{y_x}$ in \eqref{eq:LT-Lie} give respectively
\begin{align*}
 2\tfrac{q}{x^{2\nu+2}}\tfrac{d}{dx}g+g\tfrac{d}{dx}\!\left(\tfrac{q}{x^{2\nu+2}}\right)&=
 \tfrac{d^2}{dx^2}f-\left(\tfrac{p}{x^{\nu+1}}-\tfrac{\nu+1}{x}\right)\tfrac{d}{dx}f\\
 \tfrac{d^2}{dx^2}g+\left(\tfrac{p}{x^{\nu+1}}-\tfrac{\nu+1}{x}\right)\tfrac{d}{dx}g+g\tfrac{d}{dx}\!\left(\tfrac{p}{x^{\nu+1}}-
 \tfrac{\nu+1}{x}\right)&=2\tfrac{d}{dx}f.
\end{align*}
Hence 
\begin{align*}
2f(x)=\tfrac{d}{dx}g+\left(\tfrac{p(x)}{x^{\nu+1}}-\tfrac{\nu+1}{x}\right)g(x)+2f^{(0)}, \qquad\text{for some } f^{(0)}\in\C.
\end{align*}
Let $h(x)=\frac{g(x)}{x^{\nu+1}}$, then
$2f=\delta_\nu h+p\,h+2f^{(0)},$ and $\delta_\nu^2f-p\,\delta_\nu f=2q\,\delta_\nu h+\delta_\nu q\,h$,
from which
\begin{equation*}
 \delta_\nu^3 h-\Delta\,\delta_\nu h-\tfrac{1}{2}\delta_\nu\Delta\,h=0.
\end{equation*}

For a strongly non-resonant regular singularity \eqref{eq:LT-B1a}, the equation to solve is
$$\delta_0^3 h-(\lambda_1-\lambda_2)^2\delta_0 h=0,$$
which has a basis of solutions $h_1(x)=1$, $h_2(x)=x^{\lambda_1-\lambda_2}$, $h_3(x)=x^{\lambda_2-\lambda_1}$ if $\lambda_1\neq\lambda_2$, and
$h_1(x)=1$, $h_2(x)=\log x$, $h_3(x)=(\log x)^2$ if $\lambda_1=\lambda_2$. 

For a resonant regular singularity \eqref{eq:LT-B1c}, the equation for $h$ is of the form \eqref{eq:LT-symmetricfactorized} 
with $\nu=0$ and $r(x)=-\frac{k}{1-x^k}$, 
which has a basis of solutions $h_1(x)=\frac{x^k}{1-x^k}$, $h_2(x)=\frac{1}{1-x^k}+\frac{kx^k}{1-x^k}\log x$, and $h_3(x)$ 
given by $\big(\delta_0+r(x)\big)h_3(x)=\frac{1}{x^k}+k\log x$.

If $\nu>0$ and $\Delta(0)\neq 0$, then by Proposition~\ref{prop:LT-reducible} the equation \eqref{eq:LT-symmetricpower} has a non-trivial analytic solution $h(x)$ if and only if the LDE is analytically equivalent to its formal normal form \eqref{eq:LT-irregnormalform}, for which one has $h(x)=\frac{c}{\lambda_2(x)-\lambda_1(x)}$, $c\in\C$.
\end{proof}

\section*{Appendix: Implicit function theorem for Borel summable power series}

There are several equivalent ways to define Borel summability (see e.g. \cite{Ba,Mal,MR,Lod}). We will use the following one due to J.-P.~Ramis.

Let $(E,\|\cdot\|)$ be a Banach space: we will consider the following two, (i) the field $(\C,|\cdot|)$, (ii) the space of bounded analytic functions on some small disc $D=\{|y|\leq\epsilon\}$, $\epsilon>0$, together with supremum norm.

Let $\hat f(x)=\sum_{n\geq 0} f^{(n)}x^n\in E\llbracket x\rrbracket$ be a formal power series with coefficients $f^{(n)}\in E$.
\begin{itemize}
\item An open \emph{sectoral domain} at the origin is a simply connected domain $U$ in $\C$ (or in the Riemann surface of logarithm) with $0$ in its boundary that can be written as an (infinite) union of open sectors at 0 of increasing angular opening (and decreasing radius).
\item Let $U$ be an open sectoral domain at the origin. An analytic function $f:U\to E$ is said to be \emph{$s$-Gevrey asymptotic} to $\hat f$, $s>0$, if for every sector $V\subset\subset U$ (i.e. such that $\overline{V}\subset U\cup\{0\}$) there exist $C,A>0$ such that
$$\|f(x)-\sum_{n=0}^{N-1} f^{(n)}x^n \|\leq C|x|^NA^N\Gamma(1+sN),\qquad\text{for all $N>0$ and all $x\in V$}.$$
\item An analytic function $f:U\to E$ is said to be \emph{exponentially flat of order $\nu>0$} if it is $\frac{1}{\nu}$-Gevrey asymptotic to the zero series. This is equivalent to ask that for every sector $V\subset\subset U$ there exist $C,A>0$ such that
$$\|f(x)\|\leq Ce^{-\frac{A}{|x|^\nu}},\qquad\text{for all}\ x\in V.$$
\item The formal power series $\hat f(x)$ is said to be \emph{Borel $\nu$-summable in a direction} $\alpha\in\R$ for $\nu>0$, if there exists an open sector
$U$ of angular width $>\frac{\pi}{\nu}$ bisected by $\alpha$, and a function $f_\alpha:U\to E$ that is $\frac{1}{\nu}$-Gevrey asymptotic to $\hat f(x)$.
It is said to be \emph{Borel $\nu$-summable} if it is summable in all directions $\alpha\in [0,2\pi[$ up to finitely many.  
The directions of non-summability are called \emph{anti-Stokes} or \emph{singular}.
\end{itemize}

Let $\beta_0<\ldots <\beta_{m-1}$ be the anti-Stokes directions of a Borel $\nu$-summable $\hat f(x)$ in the interval $[0,2\pi[$, $\beta_m=\beta_0+2\pi$.
For $\eta>0$ arbitrarily small, let 
\begin{equation}\label{eq:LT-covering}
V_j=\{\arg x\in]\beta_j-\tfrac{\pi}{2\nu}+\eta,\beta_{j+1}+\tfrac{\pi}{2\nu}-\eta[,\ |x|<\rho(\eta)\},\quad j=0,\ldots,m-1,
\end{equation} 
with some $\rho(\eta)>0$,
be a cyclic covering of a pointed neighborhood of $0$ by sectors, and $f_{V_j}:V_j\to E$ the Borel sum of $\hat f$ in the directions $\alpha\in]\beta_j+\eta,\beta_{j+1}-\eta[$.
Then by definition $f_{V_{j+1}}-f_{V_j}$ is exponentially flat of order $\nu$ on the intersection $V_j\cup V_{j+1}$.

By the Ramis--Sibuya theorem the converse is also true, giving thus a useful characterization of Borel summability.

\begin{theorem*}[Ramis--Sibuya]
Let $V_j$, $j\in\Z_m$, be a cyclic covering of a pointed neighborhood of $0$ by sectors.
Let $f_{V_j,V_{j+1}}:V_j\cap V_{j+1}\to E$ be exponentially flat of order $\nu$, $j\in\Z_m$.
Then there exists a formal power series $\hat f(x)$, and a ``cochain'' of sectoral functions $f_{V_j}:V_j\to E$, $j\in\Z_m$, that are
$\frac{1}{\nu}$-Gevrey asymptotic to $\hat f(x)$, such that $f_{V_j,V_{j+1}}=f_{V_{j+1}}-f_{V_j}$.

In particular, if the angular opening of $V_j$ is $>\frac{\pi}{\nu}$, then $\hat f(x)$ is Borel $\nu$-summable in the directions covered by 
$e^{-\frac{\pi}{2\nu}}V_j\cap e^{\frac{\pi}{2\nu}}V_j$.
\end{theorem*}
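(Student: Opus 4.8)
The plan is to prove the non-trivial assertion — that an exponentially flat $0$-cocycle $\{f_{V_j,V_{j+1}}\}$ of order $\nu$ is a coboundary with a $\tfrac1\nu$-Gevrey cochain over a common formal series — by the sectorial (Cauchy--Heine) decomposition. After shrinking the sectors I would first arrange that the cyclic covering $\{V_j\}_{j\in\Z_m}$ is \emph{good}: $V_j\cap V_k=\emptyset$ unless $k\equiv j\pm1$ cyclically, and each overlap $V_j\cap V_{j+1}$ contains a ray $\ell_j$. I would then choose a simple path $\gamma_j\subset V_j\cap V_{j+1}$ running from $0$ outward to the bounding circle, and for $x$ in a fixed sector $V_j$ define
$$g_{V_j}(x)=\frac{1}{2\pi i}\sum_{k\in\Z_m}\int_{\gamma_k^{(j)}}\frac{f_{V_k,V_{k+1}}(t)}{t-x}\,dt,$$
where $\gamma_k^{(j)}$ is a small deformation of $\gamma_k$, kept inside $V_k\cap V_{k+1}$, chosen so as not to separate $x$ from the boundary circle. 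Each integral converges, since exponential flatness controls the numerator near $0$ and $\tfrac1{t-x}$ is integrable, and $g_{V_j}$ is holomorphic on $V_j$.

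Next I would verify the splitting property: moving $x$ from $V_j$ to $V_{j+1}$ forces only $\gamma_j$ to be dragged past $x$, and the residue theorem gives $g_{V_{j+1}}-g_{V_j}=f_{V_j,V_{j+1}}$ on every overlap (the identity closes up consistently around the full loop because $H^1$ of a punctured disc with holomorphic coefficients vanishes, so no global obstruction survives). To produce the common formal series I would substitute $\tfrac1{t-x}=\sum_{n=0}^{N-1}x^n t^{-n-1}+x^N t^{-N}(t-x)^{-1}$ into the integral, which yields
$$g_{V_j}(x)=\sum_{n=0}^{N-1}f^{(n)}x^n+\frac{x^N}{2\pi i}\sum_{k}\int_{\gamma_k^{(j)}}\frac{f_{V_k,V_{k+1}}(t)}{t^N(t-x)}\,dt,\qquad f^{(n)}:=\frac{1}{2\pi i}\sum_k\int_{\gamma_k}\frac{f_{V_k,V_{k+1}}(t)}{t^{n+1}}\,dt.$$
The coefficients $f^{(n)}$ are independent of $j$, because consecutive differences $g_{V_{j+1}}-g_{V_j}$ are flat and hence have vanishing Taylor coefficients at $0$; so all $g_{V_j}$ are asymptotic to the single series $\hat f=\sum_{n\ge0}f^{(n)}x^n\in E\llbracket x\rrbracket$.

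The quantitative heart of the argument is the Gevrey bookkeeping. On a closed subsector $V\subset\subset V_j$ I would bound the coefficient and remainder integrals using $\|f_{V_k,V_{k+1}}(t)\|\le Ce^{-A/|t|^\nu}$ together with the substitution $u=A/|t|^\nu$, which turns $\int_0^\rho r^{-n-1}e^{-A/r^\nu}\,dr$ into an incomplete Gamma integral $\le\tfrac1\nu A^{-n/\nu}\Gamma(n/\nu)$; Stirling then gives $\|f^{(n)}\|\le C_1B^n\Gamma(1+n/\nu)$, so $\hat f$ is $\tfrac1\nu$-Gevrey, and the same substitution applied to the remainder (using $|t-x|\ge c|x|$ on the near paths and $|t-x|\ge c|t|$ on the far ones) gives $\|g_{V_j}(x)-\sum_{n<N}f^{(n)}x^n\|\le C_2|x|^NA_2^N\Gamma(1+N/\nu)$ on $V$. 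Setting $f_{V_j}:=g_{V_j}$ proves the theorem, and the ``in particular'' clause follows because $\tfrac1\nu$-Gevrey asymptotics on a sector of opening $>\tfrac\pi\nu$ is equivalent, by the Watson--Nevanlinna lemma, to Borel $\nu$-summability in the directions bisecting its sub-sectors of opening $>\tfrac\pi\nu$, i.e. in those covered by $e^{-\frac{\pi}{2\nu}}V_j\cap e^{\frac{\pi}{2\nu}}V_j$.

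The main obstacle I anticipate is coordinating the contour deformations near the boundary of each $V_j$: for $x$ close to a bounding ray of $V_j$ the two neighbouring paths must be pushed away from $x$, and one has to check that the extra contributions — running along arcs near $\partial V_j$ where the cocycle data is still exponentially flat — are $O(e^{-c/|x|^\nu})$ uniformly, hence disturb neither the $j$-independence of the coefficients $f^{(n)}$ nor the Gevrey remainder bound. Everything else (convergence of the Cauchy--Heine integrals, the residue identity, and the Gamma-integral estimate) is routine.
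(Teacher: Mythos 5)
The paper does not prove this statement: it is quoted as the classical Ramis--Sibuya theorem, with the reader referred to the summability literature (Balser, Malgrange, Martinet--Ramis, Loday-Richaud). Your argument is the standard Cauchy--Heine proof of exactly that classical result, and it is correct in both structure and the quantitative details: the sectoral integrals $g_{V_j}(x)=\tfrac{1}{2\pi i}\sum_k\int_{\gamma_k^{(j)}}\tfrac{f_{V_k,V_{k+1}}(t)}{t-x}\,dt$ converge thanks to exponential flatness, the residue picked up when $\gamma_j$ is dragged across $x$ yields $g_{V_{j+1}}-g_{V_j}=f_{V_j,V_{j+1}}$, the geometric-series expansion of $\tfrac{1}{t-x}$ produces the common formal series, and the substitution $u=A/|t|^\nu$ turns the coefficient and remainder integrals into incomplete Gamma functions giving precisely the $\Gamma(1+N/\nu)$ Gevrey bounds required by the paper's definition; the ``in particular'' clause is then the usual Watson/Nevanlinna characterization. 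Two small remarks. First, the appeal to ``$H^1$ of a punctured disc with holomorphic coefficients vanishes'' is unnecessary and slightly circular --- the theorem being proved is itself the statement that the flat cocycle is a Gevrey coboundary, and since each $g_{V_j}$ is \emph{defined} independently on its own sector (not obtained by analytic continuation around the circle), there is no global consistency condition to close up; the explicit residue bookkeeping already settles everything. Second, the $j$-independence of the coefficients $f^{(n)}$ is immediate from their definition as integrals over the fixed, undeformed paths $\gamma_k$ (the deformation $\gamma_k\rightsquigarrow\gamma_k^{(j)}$ leaves them unchanged by Cauchy's theorem, since the integrand is holomorphic in the region swept out inside $V_k\cap V_{k+1}$); your alternative justification via flatness of the differences also works. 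The contour-deformation issue you flag near $\partial V_j$ is real but is handled exactly as you describe, by restricting to closed subsectors $V\subset\subset V_j$, which is all the paper's definition of Gevrey asymptotics demands.
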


\begin{proposition}\label{prop:LT-IFT}
Let $\hat F(x,y)=\sum_{j\geq 0}F^{(j)}(y)x^j$ be a formal power series of $x$ with coefficients bounded and analytic in $y$ on some small disc $D=\{|y|\leq\epsilon\}$, $\epsilon>0$,
that is Borel $\nu$-summable.
Suppose that $F^{(0)}(0)=0$ and $\frac{dF^{(0)}}{dy}(0)\neq 0$.
Then the implicit equation 
$$\hat F(x,g(x))=0,$$
has a unique formal solution $\hat g(x)=\sum_{j\geq 0}g^{(j)}x^j$, $g^{(0)}=0$, which is Borel $\nu$-summable with singular directions among the   singular directions of $\hat F(x,y)$.  
\end{proposition}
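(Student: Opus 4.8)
The plan is to settle the formal part by the ordinary formal implicit function theorem, then to produce \emph{analytic} solutions on a sectoral covering by a parametric Rouch\'e argument, and finally to upgrade these to a Borel $\nu$-summable solution by means of the Ramis--Sibuya theorem. For the formal solution, since $F^{(0)}(0)=0$ one sets $g^{(0)}=0$; substituting $\hat g(x)=\sum_n g^{(n)}x^n$ into $\hat F(x,\hat g(x))=0$ and comparing the coefficient of $x^n$, the quantity $\tfrac{dF^{(0)}}{dy}(0)\,g^{(n)}$ appears linearly added to a polynomial in $g^{(0)},\ldots,g^{(n-1)}$ and in the Taylor coefficients of the $F^{(j)}$, so that $g^{(n)}$ is uniquely determined because $\tfrac{dF^{(0)}}{dy}(0)\neq 0$. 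This gives existence and uniqueness of $\hat g$.

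Next, let $\beta_0<\ldots<\beta_{m-1}$ be the singular directions of $\hat F$ in $[0,2\pi)$ and let $\{V_j\}_{j\in\Z_m}$ be the sectoral covering \eqref{eq:LT-covering} (shrinking the radius if need be), with $F_{V_j}:V_j\times D\to\C$ the Borel sums of $\hat F$, jointly analytic and $\tfrac1\nu$-Gevrey asymptotic to $\hat F$ uniformly in $y\in D$; in particular $F_{V_j}(x,\cdot)\to F^{(0)}$ uniformly on a closed subdisc $\overline{D}_{\epsilon'}\subset D$ as $x\to 0$ in $V_j$. Choosing $\epsilon'>0$ so small that $y=0$ is the only zero of $F^{(0)}$ in $\overline{D}_{\epsilon'}$ and $|F^{(0)}|$ is bounded away from $0$ on $|y|=\epsilon'$, Rouch\'e's theorem shows that for $|x|$ small on $V_j$ the function $F_{V_j}(x,\cdot)$ has a unique zero $g_{V_j}(x)$ in $D_{\epsilon'}$, given by $g_{V_j}(x)=\tfrac1{2\pi i}\oint_{|y|=\epsilon'}y\,\tfrac{\partial_y F_{V_j}(x,y)}{F_{V_j}(x,y)}\,dy$; it is analytic on $V_j$ and $g_{V_j}(x)\to 0$ as $x\to 0$.

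On $V_j\cap V_{j+1}$ the difference $\delta_j:=F_{V_{j+1}}-F_{V_j}$ is exponentially flat of order $\nu$ in $x$, uniformly for $|y|\le\epsilon'$. Subtracting the two vanishing identities $F_{V_j}(x,g_{V_j}(x))=0$ and $F_{V_j}(x,g_{V_{j+1}}(x))=-\delta_j(x,g_{V_{j+1}}(x))$ and expanding the $y$-increment of $F_{V_j}$ as an integral gives
\[
\bigl(g_{V_j}(x)-g_{V_{j+1}}(x)\bigr)\int_0^1\partial_yF_{V_j}\bigl(x,\,g_{V_{j+1}}(x)+t(g_{V_j}(x)-g_{V_{j+1}}(x))\bigr)\,dt=\delta_j(x,g_{V_{j+1}}(x));
\]
the integral tends to $\tfrac{dF^{(0)}}{dy}(0)\neq0$ as $x\to0$, hence is bounded away from $0$, so $g_{V_j}-g_{V_{j+1}}$ is exponentially flat of order $\nu$ on $V_j\cap V_{j+1}$. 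Applying Ramis--Sibuya to this cochain yields a formal series $\hat h$ and sectoral functions $\tilde g_{V_j}$, $\tfrac1\nu$-Gevrey asymptotic to $\hat h$, with the same coboundary; then $r:=g_{V_j}-\tilde g_{V_j}$ is independent of $j$, bounded and analytic on a punctured neighbourhood of $0$, hence extends holomorphically across $0$, and $\hat g:=\hat h+(\text{Taylor series of }r)$ is a formal series to which every $g_{V_j}$ is $\tfrac1\nu$-Gevrey asymptotic. Substituting these asymptotic expansions, $F_{V_j}(x,g_{V_j}(x))$ is asymptotic to the formal substitution $\hat F(x,\hat g(x))$; as it vanishes identically this forces $\hat F(x,\hat g(x))=0$ as a formal series, so by the uniqueness above $\hat g$ coincides with the formal solution. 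Finally each $V_j$ has angular opening $>\pi/\nu$, whence by the last clause of Ramis--Sibuya $\hat g$ is Borel $\nu$-summable, summable in every direction covered by $e^{-\frac{\pi}{2\nu}}V_j\cap e^{\frac{\pi}{2\nu}}V_j$; these cover every direction except possibly $\beta_0,\ldots,\beta_{m-1}$, so the singular directions of $\hat g$ lie among those of $\hat F$.

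The main obstacle is conceptual rather than computational: a direct attack that tries to bound the Gevrey constants of the solution (say by a fixed point in a Gevrey--Banach space) is cumbersome, whereas first solving the equation sectorially with purely analytic tools and then invoking Ramis--Sibuya delivers at once the summability and the Gevrey asymptotics. Within that strategy the technical core is the exponential flatness of the coboundary $g_{V_j}-g_{V_{j+1}}$ (which rests on $\tfrac{dF^{(0)}}{dy}(0)\neq0$) together with the removability of the singularity at $0$ of $r$.
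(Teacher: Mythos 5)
Your proof is correct, and it reaches the same overall architecture as the paper (sectoral solutions on the covering $V_j$, exponential flatness of the coboundary $g_{V_j}-g_{V_{j+1}}$, then Ramis--Sibuya), but the key middle step is done by a genuinely different device. The paper solves $F_{V_j}(x,g_{V_j}(x))=0$ by a contraction-mapping argument: it iterates the operator $\Cal K_j:y(x)\mapsto y(x)-\big(\tfrac{\partial F^{(0)}}{\partial y}(0)\big)^{-1}F_{V_j}(x,y(x))$ on bounded analytic functions on $V_j$, proves contractivity via a Cauchy-integral estimate of $\partial_y\Cal K_j$, and then obtains the exponential flatness of $g_{V_{j+1}}-g_{V_j}$ essentially for free by comparing the iterates $\Cal K_{j+1}^{\circ n}(x,0)$ and $\Cal K_j^{\circ n}(x,0)$, with an explicit constant $K=2|\partial_yF^{(0)}(0)|^{-1}C$. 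You instead produce $g_{V_j}(x)$ by Rouch\'e and the argument-principle formula $\tfrac{1}{2\pi i}\oint y\,\partial_yF_{V_j}/F_{V_j}\,dy$, which gives existence, uniqueness in $D_{\epsilon'}$ and analyticity in $x$ without setting up a function space, and you then extract the flatness of the coboundary from the mean-value identity $\big(g_{V_j}-g_{V_{j+1}}\big)\int_0^1\partial_yF_{V_j}\,dt=\delta_j(x,g_{V_{j+1}})$, using that the integral tends to $\partial_yF^{(0)}(0)\neq0$ (which implicitly uses that $\partial_yF_{V_j}\to\partial_yF^{(0)}$ uniformly, a consequence of Cauchy estimates worth stating). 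Your write-up is also more careful than the paper on two peripheral points: you prove formal existence and uniqueness by comparing coefficients and then identify the Ramis--Sibuya output with that formal solution via the asymptotics of the composition, and you spell out the standard step showing that the \emph{given} cochain $g_{V_j}$ is Gevrey asymptotic (correcting the Ramis--Sibuya cochain by the removable-singularity function $r$), which the paper takes for granted. The trade-off is that the contraction approach generalizes verbatim to Banach-space-valued $y$ and yields quantitative constants, whereas the Rouch\'e route is tied to scalar $y$ but is shorter and avoids the operator estimates.
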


\begin{proof}[\textbf{Proof.}]
For any small $\eta>0$, let $F_{V_j}:V_j\times D\to\C$, $j\in\Z_m$, be the ``cochain'' of the Borel sums of $\hat F$ on the cyclic sectoral covering $V_j$ \eqref{eq:LT-covering}.
Then there exist $C,A>0$ such that
$$|F_{V_{j+1}}(x,y)-F_{V_j}(x,y)|\leq C e^{-\frac{A}{|x|^\nu}} \quad\text{on the intersections} \ (x,y)\in (V_j\cap V_{j+1})\times D.$$ 
We shall solve the sectoral implicit equations
$$F_{V_j}(x,g_{V_j}(x))=0,$$
for $x\in V_j$, by replicating the usual proof of the implicit function theorem.
The sectoral solution $g_{V_j}(x)$ is obtained as a fixed point of the operator
$$\Cal K_j:y(x)\mapsto y(x)-\big(\tfrac{\partial F^{(0)}}{\partial y}(0)\big)^{-1}F_{V_j}(x,y(x)),\qquad g_{V_j  }(x)=\lim_{n\to+\infty}\Cal K_j^{\circ n}(x,0),$$
on the space of bounded analytic functions on the sector $V_j$ with $\sup_{x\in V_j} |y(x)|\leq\frac{\epsilon}{2}$ for some small $\epsilon>0$, supposing that the radius of $V_j$ is small enough.
Indeed, let us estimate
$$\big|\tfrac{\partial \Cal K_j}{\partial y}(x,y)\big|\leq 
\big|\tfrac{\partial F^{(0)}}{\partial y}(0)\big|^{-1} \big|\tfrac{\partial F_{V_j}}{\partial y}(x,y)-\tfrac{\partial F^{(0)}}{\partial y}(y)\big|+
\big|\big(\tfrac{\partial F^{(0)}}{\partial y}(0)\big)^{-1}\tfrac{\partial F^{(0)}}{\partial y}(y)-1\big|,$$
for $x\in V_j$, $|y|<\frac{\epsilon}{2}$.
The second term can be made arbitrarily small by restricting the radius $\epsilon$ of $D$.
The first term can expressed by the Cauchy formula as
$$\frac{1}{2\pi}\big|\tfrac{\partial F^{(0)}}{\partial y}(0)\big|^{-1} 
\left|\int_{|\zeta-y|=\tfrac{\epsilon}{2}}\tfrac{F_{V_j}(x,\zeta)- F^{(0)}(\zeta)}{\zeta-y}d\zeta\right|\leq c|x|,$$
for some $c>0$.
Hence, up to restricting the radius $\rho(\eta)$ of the sector $V_j$, one can assume that $\big|\frac{\partial \Cal K_j}{\partial y}(x,y)\big|\leq\frac{1}{2}$ for $x\in V_j$, $|y|<\frac{\epsilon}{2}$,
and the operator is contractive.

Let us now show that
$|g_{V_{j+1}}(x)-g_{V_j}(x)|\leq K e^{-\frac{A}{|x|^\nu}}$ on the intersections $V_{j+1}\cap V_j$ for some $K>0$ in order to apply the Ramis--Sibuya theorem and obtain the Borel summability.
For $x\in V_{j+1}\cap V_j$ and $|y_{j+1}|,|y_j|\leq\frac{\epsilon}{2}$ one can estimate
$|\Cal K_{j+1}(x,y_{j+1})-\Cal K_j(x,y_j)|\leq |\Cal K_{j+1}(x,y_{j+1})-\Cal K_{j+1}(x,y_j)|+|\Cal K_{j+1}(x,y_j)-\Cal K_j(x,y_j)|$,
where the first term is bounded by 
$|y_{j+1}-y_j|\cdot\int_0^1\big|\frac{\partial \Cal K_j}{\partial y}(x,ty_{j+1}+(1-t)y_j)\big|dt\leq\frac{1}{2}|y_{j+1}-y_j|$,
and the second term is bounded by $|\tfrac{\partial F^{(0)}}{\partial y}(0)|^{-1} C e^{-\frac{A}{|x|^\nu}}$. Therefore the difference of all the respective iterations satisfy $|\Cal K_{j+1}^{\circ n}(x,0)-\Cal  K_j^{\circ n}(x,0)|\leq K e^{-\frac{A}{|x|^\nu}}$ for $K=2|\tfrac{\partial F^{(0)}}{\partial y}(0)|^{-1} C$ and so do the limits $|g_{V_{j+1}}(x)-g_{V_j}(x)|\leq K e^{-\frac{A}{|x|^\nu}}$.
\end{proof}

\section*{Acknowledgement}
The motivation to look at analytic classification of second order linear differential equations came from the article \cite{KS} of Kossovskiy \& Schafikov where the question of analytic point equivalence was posed and a particular case investigated in connection to a classification problem in CR geometry. I want to thank to Ilya Kossovskiy for his generous support during the preparation of this paper.

\bigskip
\goodbreak

\footnotesize

\end{document}